\newcommand{\breakline}{
\begin{center}
------------------------------------------------------------------------------------------------------------
\end{center}
}
\newtheorem{theorem}{Theorem}[section]
\newtheorem{lemma}[theorem]{Lemma}
\newtheorem{remark}[theorem]{Remark}
\newcommand{\dd}{\,{\rm d}}
\newcommand{\osc}{\,{\rm osc}}
\newcommand{\mbb}{\mathbb}
\newcommand{\bs}{\boldsymbol}
\newcommand{\mcal}{\mathcal}
\DeclareMathOperator*{\img}{img}
\newcommand{\curl}{{\rm curl\,}}
\renewcommand{\div}{\operatorname{div}}
\newcommand{\grad}{{\rm grad\,}}
\DeclareMathOperator*{\tr}{tr}
\begin{document}
\title[Convergence of AMFEM for the Hodge Laplacian Equation]{Convergence of Adaptive Mixed Finite Element Methods for the Hodge Laplacian Equation: without harmonic forms}

\author{Long Chen and Yongke Wu}

\address[L.~Chen]{Department of Mathematics, University of California at Irvine, Irvine, CA 92697, USA}
\email{chenlong@math.uci.edu}

\address[Y.~Wu]{School of Mathematical Sciences, University of Electronic Science and Technology of China, Chengdu 611731, China.}
\email{wuyongkeuestc@126.com}

\subjclass[2010]{
  68J10; 65M12; 65N30
}

\date{\today}
\begin{abstract}
Finite element exterior calculus (FEEC) has been developed as a systematical framework for constructing and analyzing stable and accurate numerical method for partial differential equations by employing differential complexes. This paper is devoted to analyze the convergence of adaptive mixed finite element methods for the Hodge Laplacian equations based on FEEC without considering harmonic forms. A residual type \emph{a posteriori} error estimates is obtained by using the Hodge decomposition, the regular decomposition and bounded commuting quasi-interpolants. An additional marking strategy is added to ensure the quasi-orthogonality, based on which the convergence of adaptive mixed finite element methods is obtained without assuming the initial mesh size is small enough.
\end{abstract}

\keywords{\emph{a posteriori} error estimates, adaptive mixed finite elements, the Hodge Laplacian}
\maketitle

\section{Introduction}

This paper is devoted to analyze the convergence of adaptive mixed finite element methods (AMFEM) for the Hodge Laplacian equations based on finite element exterior calculus (FEEC) which is a general framework for constructing and analyzing mixed finite element methods for partial differential equations developed by Arnold, Falk, and Winther~\cite{Arnold.D;RS.Falk;Winther.R2006,Arnold.D;RS.Falk;Winther.R2010}. 

Let 
\begin{equation}\label{short}
H_0\Lambda^-(\Omega) \stackrel{\dd ^-}{\longrightarrow} H_0\Lambda(\Omega) \stackrel{\dd}{\longrightarrow} H_0\Lambda^+(\Omega)    
\end{equation}
be part of the de Rham sequence where $H_0^-\Lambda(\Omega),\ H_0\Lambda(\Omega),\text{ and }H_0^+\Lambda(\Omega)$ are appropriate spaces of differential forms and $\dd, \dd^{-}$ are exterior derivatives. Assume that $\Omega$ is a domain with trivial cohomology and thus the sequence \eqref{short} is exact, i.e., $\ker(\dd) = \img (\dd ^-)$. For readers who are more familiar with the vector calculus, a concrete example is 
\begin{equation}\label{shortvector}
H_0^1(\Omega) \stackrel{\grad}{\longrightarrow} \bs H_0(\curl;\Omega) \stackrel{\curl}{\longrightarrow} \bs H_0(\div; \Omega).    
\end{equation}
Consider the mixed formulation of the Hodge Laplacian corresponding to this exact sequence: Given $f\in L^2\Lambda(\Omega)$, find $(\sigma,u) \in H_0\Lambda^-(\Omega) \times H_0\Lambda(\Omega)$, such that 
\begin{equation}\label{continuous}
\left\{
\begin{array}{llrlll}
\langle\sigma,\tau\rangle & - & \langle \dd ^- \tau, u\rangle & = & 0 & \forall \ \tau \in H_0\Lambda^-(\Omega),\\
\langle \dd ^- \sigma,v\rangle & + & \langle \dd u , \dd v\rangle & = & \langle f, v \rangle & \forall  \ v \in H_0\Lambda(\Omega).
\end{array}  
\right. 
\end{equation}
Given a shape regular triangulation $\mathcal T_h$ of $\Omega$, let $V_h^{-} \subset H_0\Lambda^-(\Omega)$, $V_h \subset H_0\Lambda(\Omega), \text{ and } V_h^+ \subset H_0\Lambda^+(\Omega)$ be the finite element spaces so that the discrete sequence $V_h^{-} \stackrel{\dd ^-}{\longrightarrow}  V_h\stackrel{\dd}{\longrightarrow}  V_h^+$ is also exact.
We are interested in the accuracy of the following mixed finite element approximation to \eqref{continuous}: Find $(\sigma_h,u_h) \in V_h^- \times V_h$, such that
\begin{equation}\label{discrete}
\left\{
\begin{array}{llrlll}
\langle\sigma_h,\tau_h\rangle & - & \langle \dd ^- \tau_h, u_h\rangle & = & 0 & \forall \tau_h \in V_h^{-},\\
\langle \dd ^- \sigma_h,v_h\rangle & + & \langle \dd u_h , \dd v_h\rangle & = & \langle f, v_h \rangle & \forall  v_h \in V_h.
\end{array}
\right.
\end{equation}  
Precise definition of spaces and differential operators can be found in Section 2. For the specific sequence \eqref{shortvector}, \eqref{continuous} is the mixed formulation of the vector Laplacian using $H_0^1(\Omega)$ for $\sigma$ and $\bs H_0(\curl;\Omega)$ for $u$ and \eqref{discrete} is the discretization using the Lagrange element for $\sigma_h$ and the edge element for $u_h$.

When the domain $\Omega$ is non-smooth or non-convex, the solution of the Hodge Laplacian equation \eqref{continuous} usually contains singularity, and consequently the finite element approximation \eqref{discrete} based on quasi-uniform grids will not provide optimal order of convergence. Adaptive finite element methods (AFEM) based on the iteration
\begin{equation}\label{eq:algo_pro}
\text{SOLVE} \rightarrow \text{ESTIMATE} \rightarrow \text{MARK} \rightarrow \text{REFINE}
\end{equation}
are an effective procedure to achieve quasi-optimal order of convergence. In this procedure, SOLVE computes the solution $(\sigma_l,u_l)$ of the discrete problem \eqref{discrete} discretized on the grid $\mathcal T_l$. The step ESTIMATE computes an \emph{a posteriori} error estimators $\eta((\sigma_l,u_l),K)$ for each element $K \in \mathcal T_l$. These estimators are then used in the step MARK to select a subset $\mathcal M_l$ of $\mathcal T_l$. Finally, the step REFINE refines each element in $\mathcal M_l$ and makes a completion to get a shape regular and conforming mesh $\mathcal T_{l+1}$.
 
Guaranteeing the convergence of the solution sequence produced by the adaptive procedure \eqref{eq:algo_pro} is a fundamental question and has been an active research topic in recent decades. Babu\v{s}ka and Vogelius \cite{Babuska;Vogelius1984} started the convergence analysis of AFEM in one dimension. D\"orfler~\cite{Dorfler1996} introduced a crucial marking strategy which will be called D\"orfler marking afterwards, i.e., for a given $\theta\in (0,1)$, mark $\mcal M_l$ s.t. 
$$
\sum_{K\in \mcal M_l}\eta^2((\sigma_l,u_l),K)\geq \theta \sum_{K\in \mcal T_l}\eta^2((\sigma_l,u_l),K).
$$
With this marking strategy, he proved strict energy norm reduction for the Poisson equation under the assumption that the initial mesh was sufficiently fine. Morin, Nochetto, and Siebert~\cite{Morin;Nochetto;Siebert;2000,Morin;Nochetto;Siebert;2002} involved the so-called interior node property and an additional marking step of oscillation to remove the initial mesh condition. The requirement of the interior node property and the extra marking of the oscillation was further removed by Cascon, Kreuzer, Nochetto, and Siebert~\cite{Cascon;Kreuzer2008}.
We refer to~\cite{Nochetto;Siebert;Veeser2009} for a survey of convergence analysis of AFEM for elliptic problems
and~\cite{Feischl;Fuhrer;Pratetorius2014,Carstensen;Feischl;Praetoriu2014} for recent advance of this topic.

We shall study the convergence of AMFEM for the Hodge Laplacian based on FEEC.
By treating the Hodge Laplacian for general differential forms, we unify convergence proofs of adaptive finite element methods of many important second order elliptic equations involving the well-known primary and mixed formulation of the scalar elliptic equation~\cite{Dorfler1996,Morin;Nochetto;Siebert;2000,Morin;Nochetto;Siebert;2002,Stevenson2007,Cascon;Kreuzer2008,Chen;Holst;Xu2009,Huang;Xu2012,Feischl;Fuhrer;Pratetorius2014,Carstensen;Feischl;Praetoriu2014}. For the vector Laplacian and related problems, convergence and optimality of AFEM for Maxwell's equations can be found in the literature; see, for example,~\cite{Chen;Xu;Zou2009,Zhong;Chen;Shu;Xu2012,Chen;Xu;Zou2012,Duan;Qiu;Tan;Zheng2016}. 

For the Hodge Laplacian problem, an important step is the unified residual-type {\it a posteriori} error estimators developed by Demlow and Hirani~\cite{Demlow2014}.
Their \emph{a posteriori} error estimators were obtained by using the inf-sup condition and the continuity of \eqref{continuous} in the $H\Lambda$-norm and in the form
\begin{equation}\label{DH}
\|u-u_h\|_{H\Lambda} + \|\sigma - \sigma_h\|_{H\Lambda^-} \leq C \left(\eta_{-1}(\sigma_h, u_h) + \eta_{0}(\sigma_h, u_h) \right),
\end{equation}
where $\eta_{-1}$ is to control the $L^2$-norm $\|\sigma- \sigma_h\| + \| u -u_h\|$, and $\eta_{0}$ for the  energy norm $\|\dd (u - u_h)\| + \|\dd ^{-}(\sigma - \sigma_h)\|$. See \cite[Lemma 7 and 8]{Demlow2014} for the precise definition of these two estimators.

A key ingredient in the convergence theory of AFEM is a certain orthogonality of the error to the finite element space which is not revealed in~\cite{Demlow2014}. The first main contribution of this paper is to explore some full and partial orthogonality contained in the mixed formulation of the Hodge Laplacian. The orthogonality of $\sigma$
can be derived by choosing test functions $v_h = \dd ^- \tau_h$ in \eqref{discrete} and using the fact $\dd\circ \dd ^{-} = 0$ to get
$$ 
\langle \dd ^-(\sigma - \sigma_h), \dd ^-\tau_h\rangle = 0, \quad \forall \tau_h \in V_h^-.
$$ 
The orthogonality of $u$ is less straightforward. Let $\mathcal T_H$ and $\mcal T_h$ be two triangulations with $\mcal T_h$ being a refinement of $\mcal T_H$. Let $(\sigma_h, u_h)$ and $(\sigma_H, u_H)$ be the mixed finite element approximations in the spaces based on $\mcal T_h$ and $\mcal T_H$ respectively. We shall prove that 
\begin{equation}\label{ucross}
\langle \dd (u - u_h), \dd (u_h - u_H)\rangle \leq 2Ch^s\|\dd ^-(\sigma - \sigma_h)\| \|\dd (u_h - u_H)\|,
\end{equation}
where $1/2\leq s\leq 1$ is a regularity constant depending only on the shape of the domain $\Omega$ and $h$ is the maximum of the diameter of simplices in $\mathcal T_h$. To prove \eqref{ucross}, we generalize a technique used for Maxwell's equations, i.e., a discrete divergent free field can be lifted to a divergence free field within error $h^s$, to the setting of general differential forms, see Lemma \ref{lem:app_Q}. 
Inequality \eqref{ucross} will imply the following quasi-orthogonality 
\begin{equation}\label{hsquasiorth}
\begin{array}{ll}
&(1-Ch^s)\|\dd ^-(\sigma - \sigma_h)\|^2 + \|\dd (u - u_h)\|^2  \\
&\leq \|\dd ^-(\sigma - \sigma_H)\|^2 + \|\dd (u - u_H)\|^2 \\
& - \left [\|\dd ^-(\sigma - \sigma_H)\|^2 + (1+Ch^s)\|\dd (u - u_H)\|^2\right ].
\end{array}
\end{equation}
With such a quasi-orthogonality, the convergence and optimality of AMFEM can be obtained under the assumption that the initial mesh size is sufficiently small as has been done in the literature for Maxwell's equations~\cite{Zhong;Chen;Shu;Xu2012, Chen;Xu;Zou2012,Duan;Qiu;Tan;Zheng2016}. 

The second main contribution of this paper is to remove this assumption for the convergence proof. To better present the result, we now change the subscript to the iteration index. By enforcing an additional D\"orfler marking for the error estimator $\eta(\sigma_l)$ of $\sigma$ only, we can prove the reduction of this error estimator and thus inequality \eqref{ucross} can be improved to 
$$\langle \dd (u - u_{l+m}), \dd (u_{l+m} - u_{m})\rangle \leq \epsilon \eta(\sigma_{l})\|\dd (u_{l+m} - u_{l})\|$$
for an arbitrary small constant $\epsilon$ provided $m$ is large enough. Then this reduction is used to prove the convergence of the total error plus the error estimator 
$$
\|\dd ^-(\sigma - \sigma_k)\|^2 + \|\dd (u - u_k)\|^2 + \alpha \, \eta^2(\sigma_k, u_k),
$$
with a suitable weight $\alpha$.  We prove the convergence by showing a convergent sub-sequence with a bounded gap in the iteration index.

One more contribution is our new way to derive the residual type \emph{a posteriori} error estimator
\begin{align*}
\eta^2((\sigma_h,u_h),\mathcal M_h) & = \sum\limits_{K \in \mathcal M_h} h_K^2\Big(\|\delta_K(f - \dd ^-\sigma_h)\|_K^2 + \| f - \delta_K \dd u_h - \dd ^-\sigma_h\|_K^2 \Big)\\ 
& + \sum\limits_{e\in\mathcal F_h} h_e\Big(\|[\tr\star(f -  \dd ^- \sigma_h)]\|_e^2 
+ \|[\tr\star \dd u_h]\|_e^2 \Big), 
\end{align*}
which is just $\eta_0$ of Demlow and Hirani's estimators~\cite{Demlow2014}.
As we mentioned before, Demlow and Hirani~\cite{Demlow2014} uses the stability in $H\Lambda$ norm and their error estimator involves both the energy norm and the $L^2$-norm, c.f., \eqref{DH}. In contrast, we derive the error estimator using the orthogonality and thus obtain a tighter estimator for the energy norm only, i.e.,
\begin{equation}\label{CW}
\|\dd ^-(\sigma - \sigma_h)\| + \|\dd (u - u_h)\| \leq C\eta(\sigma_h, u_h).
\end{equation}
During the derivation of our error estimator, we develop discrete Poincar\'e inequalities and some new variations of regular decompositions of the differential forms which are of independent interest.

In this paper we only consider the domain $\Omega$ without harmonic forms, i.e., for contractable domains with trivial homology groups. If the topology of the domain is non-trivial, we can first approximate the harmonic forms, which is an eigenvalue problem with known (zero) eigenvalue, and then restrict to the space orthogonal to the harmonic forms. Of course, the harmonic form itself could be singular and the adaptive procedure can be also used.  On the convergence analysis of adaptive finite element  approximation of harmonic forms, we refer to the recent work by Demlow~\cite{Demlow2016}. Combination of the current work and Demlow's result in~\cite{Demlow2016} will be our future work.

To simplify the notation, we restrict ourselves to the homogenous Dirichlet boundary conditions. Our results are naturally extendable to the de Rham complex \eqref{deRham} corresponding to the Neumann boundary conditions. For mixed boundary conditions, establishing necessary properties of the Hodge decomposition, regular decomposition, and quasi-interpolant is more involved and has not been carried out in the literature to date~\cite{Costabel;McIntosh2010,GMM2011}.

In the marking step, besides $\eta(\sigma_h, u_h)$ we include an additional D\"orfler marking for $\eta(\sigma_h)$ which is crucial to obtain the quasi-orthogonality and the convergence proof without assuming the initial mesh size is sufficiently small. On the other hand, also due to this additional marking, we cannot prove the optimality of our adaptive algorithm following the standard approaches~\cite{Nochetto;Siebert;Veeser2009,Carstensen;Feischl;Praetoriu2014}. 

Another remark is that our results for differential forms $1\leq k \leq n-1$ are not consistent with the existing results~\cite{Chen;Holst;Xu2009,Huang;Xu2012,Holst;Mihalik;Ryan;2013} for $k=n$. When $k=n$, $\dd = 0$ in the short exact sequence \eqref{short}. Therefore $\|\dd ^{-}(\sigma - \sigma_h)\|$ can be bounded by the so-called data oscillation $\|h(f-f_h)\|$. A different orthogonality of $\|\sigma - \sigma_h\|$, i.e. in $L^2-$norm can be established and the convergence and optimality for $\|\sigma - \sigma_h\|$ (not involving the error $u-u_h$) is obtained consequently. In contrast, our results for $1\leq k \leq n-1$ are on the energy norm and the errors $\sigma - \sigma_h$ and $u - u_h$ are coupled together.

Throughout this paper, the notation $a \lesssim b$ (or $a \gtrsim b$) means there exists a positive constant $C$, which is independent of the mesh size $h$, may dependent on the order of the polynomial and may not be the same at different occurrences, such that $a \leq Cb$ (or $ a \geq Cb$). When the constant in an inequality plays a role in the convergence analysis, we will use $C_1, C_2, \ldots,$ with a specific index. 

The structure of this paper is as follows. In Section 2, we review basic definitions and properties of Hilbert complexes, the Hodge Laplacian equation, and regular decompositions and commuting quasi-interpolants. In Section 3, we obtain the orthogonality of $\sigma$ and the quasi-orthogonality of $u$. In Section 4, we establish \emph{a posteriori} upper bounds, and lower bounds for errors. In Section 5, we give the convergence of the adaptive algorithm, which is the main results of this paper. In Section 6, we present some examples and translate our results into standard vector calculus notations in three dimensions.

\section{Preliminaries}

In this section, following~\cite{Arnold.D;RS.Falk;Winther.R2006,Arnold.D;RS.Falk;Winther.R2010}, we will review basic definitions and properties of the de Rham complex and then introduce the mixed formulation of the Hodge Laplacian equation.
 
\subsection{de Rham Complexes and Poincar\'e Inequalities}

Let $\Omega \subset \mathbb R^n$, $n \geq 2$ be a bounded Lipschitz polyhedral domain. For an integer $0 \leq k \leq n$, $\Lambda^k(\Omega)$ represents the linear space of all smooth $k-$forms on $\Omega$. As $\Omega$ is a flat domain in $\mathbb R^n$, we can identify each tangent space of $\Omega$ with $\mbb R^n$. Given a $\omega\in \Lambda^k(\Omega)$ and vectors $v_1, \ldots, v_k$, we obtain a smooth map $\Omega \to \mbb R$ by $x\mapsto \omega_x(v_1, \ldots, v_k)$ for $x \in \mathbb R^n$. We use the default $n$-volume of $\mathbb R^n$ and denote the volume form in $\Lambda^n(\Omega)$ by $\dd {\rm v}$. We  define the $L^2$-inner product for two differential $k$-forms on $\Omega$ as
$$
\langle \omega, \mu \rangle = \int_{\Omega} \langle \omega_x, \mu_x \rangle \dd {\rm v}.
$$
We then take the completion of  $\Lambda^k(\Omega)$ in the corresponding norm to get  the Hilbert space $L^2\Lambda^k(\Omega)$. Similarly for any real number $r $, we can define the Sobolev spaces of $k-$forms   $H^r\Lambda^k(\Omega)$ with norm $\|\cdot\|_{H^r}$.

Given a smooth differential $k$-form $\omega \in \Lambda^k(\Omega)$, we define the exterior derivative 
\begin{align*}
\dd^k \omega_x(v_1,\cdots,v_{k+1}) & = \sum\limits_{j = 1}^{k+1}(-1)^{j+1}\partial_{v_j}\omega_x(v_1,\cdots,\hat v_j,\cdots,v_{k+1}),
\end{align*}
where the hat is used to indicate a suppressed argument.
Then $\dd^k: \Lambda^k(\Omega) \to \Lambda^{k+1}(\Omega)$ is a sequence of differential operators satisfying that the range of $\dd^k$ lies in the domain of $\dd^{k+1}$ i.e. $\dd^{k+1} \circ \dd^k  = 0$, for $k = 0,1,\cdots, n-1$. For the convenience of notation, later on we shall skip the superscript $k$ if there is no confusion.

The domain of the exterior derivative can be enlarged. 
Let 
$$
H\Lambda^k(\Omega) = \{\omega \in L^2\Lambda^k(\Omega)|\ \dd \omega \in L^2\Lambda^{k+1}(\Omega)\},
$$  
be the domain of $\dd $ with inner product $\langle \omega, \mu \rangle + \langle \dd \omega, \dd \mu \rangle$ and associated graph norm $\|\cdot\|_{H\Lambda}$. The de Rham complex
\begin{equation}\label{deRham}
\mathbb R\longrightarrow H\Lambda^0(\Omega) \stackrel{\dd}{\longrightarrow} H\Lambda^1(\Omega) \stackrel{\dd}{\longrightarrow} \cdots \stackrel{\dd}{\longrightarrow} H\Lambda^n(\Omega) \longrightarrow 0,     
\end{equation}
is then bounded in the sense that $\dd :\ H\Lambda^k(\Omega) \mapsto H\Lambda^{k+1}(\Omega)$ is a bounded linear operator.

Let $M$ be a smooth manifold. For any $x \in M$, we use $T_xM$ to denote the tangential space of $M$ at $x$. For any $k$-form $\omega \in \Lambda^k(M)$, we define $\tr\omega \in \Lambda^k(\partial M)$ satisfying
$$
\tr\omega(v_1,v_2,\cdots,v_k) = \omega(v_1,v_2,\cdots,v_k),
$$
for tangent vectors $v_i \in T_x\partial M  \subset T_x M\ (i = 1,2,\cdots,k)$.
The trace operator can be extended continuously to Lipschitz domain $\Omega$, and to $\tr:\ H^1\Lambda^k(\Omega) \mapsto H^{1/2}\Lambda^k(\partial \Omega)$ and $\tr:\ H\Lambda^k(\Omega)\mapsto H^{-1/2}\Lambda^k(\partial \Omega)$ (see~\cite[page 19]{Arnold.D;RS.Falk;Winther.R2006}). Define 
\begin{align*}
H_0\Lambda^k(\Omega) &= \{ \omega \in H\Lambda^k(\Omega):\tr \omega = 0 \text{ on } \partial \Omega\}\\
H_0^1\Lambda^k(\Omega) &= \{ \omega \in H^1\Lambda^k(\Omega): \tr \omega = 0 \text{ on } \partial \Omega\}
\end{align*}
For easy of presentation we will focus on the de Rham complex with homogenous trace
\begin{equation}\label{eq:complex}
0\longrightarrow H_0\Lambda^0(\Omega) \stackrel{\dd}{\longrightarrow} H_0\Lambda^1(\Omega) \stackrel{\dd}{\longrightarrow} \cdots \stackrel{\dd}{\longrightarrow} H_0\Lambda^n(\Omega) \longrightarrow 0.     
\end{equation}
Our results can be naturally extended to the de Rham complex \eqref{deRham} corresponding to the Neumann boundary conditions. For the mixed boundary conditions, establishing necessary properties of the Hodge decomposition, regular decomposition, and quasi-interpolant is more involved and has not been carried out in the literature to date~\cite{Costabel;McIntosh2010,GMM2011}.

Let $\wedge$ denote the wedge product and let $\star: \Lambda^k(\Omega) \mapsto \Lambda^{n-k}(\Omega)$ be the Hodge star operator. For any $\omega \in \Lambda^k(\Omega),  \mu \in \Lambda^{n-k}(\Omega)$, $\wedge$ and $\star$ are related by  
$$
\int_{\Omega} \omega \wedge \mu = \langle \star \omega, \mu \rangle,
$$
The co-derivative operator $\delta: \Lambda^k(\Omega) \mapsto \Lambda^{k-1}(\Omega)$ is defined as
$$
\delta \omega = (-1)^{k(n-k+1)}\star \, \dd \star \omega.
$$
In analogy with $H\Lambda^k(\Omega)$, we define the spaces
\begin{align*}
H^*\Lambda^k(\Omega) & =  \{ \omega \in L^2\Lambda^k(\Omega): \delta \omega \in L^2\Lambda^{k-1}(\Omega)\},\\
H_0^*\Lambda^k(\Omega) & =   \{ \omega \in H^*\Lambda^k(\Omega): \tr(\star \omega) = 0\}.
\end{align*}
Stokes' theorem implies 
\begin{equation*}
\langle \dd \omega,  \mu \rangle = \langle \omega,  \delta \mu \rangle + \int_{\partial \Omega}\tr \omega \wedge \tr\star \mu,\quad \omega \in \Lambda^{k-1}(\Omega),\ \mu\in \Lambda^k(\Omega).
\end{equation*}
Treat $\dd: H_0\Lambda^k\subset L^2\Lambda^k \to L^2\Lambda^{k+1}$ as a unbounded and densely defined operator. Then $\delta: H^*\Lambda^k(\Omega)\subset L^2\Lambda^{k+1}(\Omega) \mapsto L^2\Lambda^{k}(\Omega)$ is the adjoint of $\dd$ as 
\begin{equation}\label{eq:d_delta}
\langle \dd \omega,\mu \rangle  = \langle \omega ,\delta \mu \rangle, \quad \omega\in H_0\Lambda^k.
\end{equation}
Consequently we have a dual sequence of \eqref{eq:complex}
\begin{equation}\label{eq:complex_dual}
0\longleftarrow H^*\Lambda^0(\Omega) \stackrel{\delta}{\longleftarrow} H^*\Lambda^1(\Omega) \stackrel{\delta}{\longleftarrow} \cdots \stackrel{\delta}{\longleftarrow} H^*\Lambda^n(\Omega) \longleftarrow \mathbb R.     
\end{equation}

The kernel of $\dd$ in $H_0\Lambda^k$ can be decomposed as $\mathcal Z_0^k = \mathcal B_0^k \oplus^{\bot_{L^2}} \mathcal H^k_0$, where $\mathcal B_0^k$ is the range of $\dd$, i.e. $\mathcal B_0^k = \dd (H_0\Lambda^{k-1}(\Omega))$ and $\mathcal H_0^k$ is the space of the harmonic forms, i.e., $\mathcal H_0^k = \{ \omega \in H_0\Lambda^k(\Omega)\cap H^*\Lambda^k(\Omega):\ \dd \omega = 0 \text{ and } \delta\omega = 0 \}$.
The following Hodge decomposition has been established in~\cite[page 22]{Arnold.D;RS.Falk;Winther.R2006}
$$
L^2\Lambda^k(\Omega) = \mathcal B_0^k \oplus^{\bot_{L^2}} \mathcal H_0^k \oplus^{\bot_{L^2}} \delta H^*\Lambda^{k+1}(\Omega).
$$
Let $\mathcal K^k$ be the $L^2$ orthogonal complement of $\mathcal Z_0^k$ in $H_0\Lambda^k(\Omega)$, i.e., $\mathcal K^k =  H_0\Lambda^k(\Omega) \cap \delta H^*\Lambda^{k+1}(\Omega)$.
Then we have the Hodge decomposition of $H_0\Lambda^k(\Omega)$:
\begin{equation}\label{eq:Hodge_decom}
H_0\Lambda^k(\Omega) = \mathcal Z_0^k \oplus^{\bot_{L^2}} \mathcal K^k = \mathcal B_0^k \oplus^{\bot_{L^2}} \mathcal H_0^k \oplus^{\bot_{L^2}} \mathcal K^k.
\end{equation}

In this paper we consider the domain $\Omega$ without harmonic forms, namely we impose the following assumption

\medskip

\noindent ({\bf A}) We assume that $\Omega$ is simple in the sense that $\dim \mathcal H_0^k = 0$ for $k = 1, 2, \cdots, n-1.$

\medskip

If the topology of the domain is non-trivial, we can first approximate the harmonic forms, which is an eigenvalue problem with known (zero) eigenvalue. In practice the dimension of $\mathcal H^k$ is usually small. We may then consider the Hodge Laplacian equation on the space orthogonal to the harmonic forms. Of course, harmonic forms could be singular and an adaptive procedure can be used as well.
For adaptive finite element approximations of harmonic forms, we refer to the recent work by Demlow~\cite{Demlow2016}.

In the rest part of this paper, when spaces of the consecutive differential forms are involved, we use the short sequences 
\begin{equation}\label{eq:complex_Hilbert}
H_0\Lambda^-(\Omega) \stackrel{\dd ^-}{\longrightarrow} H_0\Lambda(\Omega) \stackrel{\dd}{\longrightarrow} H_0\Lambda^+(\Omega) 
\end{equation}
or the one with the Hodge decomposition
\begin{equation}\label{eq:ZK}
\mathcal Z_0^- \oplus ^{\bot_{L^2}} \mathcal K^-  \stackrel{\dd ^-}{\longrightarrow} \mathcal Z_0 \oplus ^{\bot_{L^2}} \mathcal K \stackrel{\dd}{\longrightarrow} \mathcal Z_0^+ \oplus ^{\bot_{L^2}} \mathcal K^+.
\end{equation}

We have the following embedding result of the space $H_0\Lambda^k(\Omega) \cap H^*\Lambda^k(\Omega)$ on Lipschitz polyhedra domain $\Omega$. 
\begin{lemma}[Regularity of Intersection of Spaces~(\cite{Mitrea;Mitrea;Taylor2001}, Theorem 11.2)] \label{lem:regularity}
Assume $\Omega$ is a Lipschitz polyhedra. Then there exists a constant $\frac{1}{2} \leq s \leq 1$ depending only on $\Omega$ such that $H_0\Lambda^k(\Omega) \cap H^*\Lambda^k(\Omega)\hookrightarrow H^s\Lambda^k(\Omega)$, i.e., there exists a constant $C_r > 0$ depending only on $\Omega$, such that for all $v \in H_0\Lambda^k(\Omega) \cap H^*\Lambda^k(\Omega)$
$$
\|v\|_{H^s} \leq C_r \left( \|v\| + \|\dd v\| + \|\delta v\|  \right).
$$
\end{lemma}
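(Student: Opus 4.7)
The plan is to reduce the embedding to an $H^s$-regularity shift for the Hodge--Dirichlet problem on $\Omega$ via the $L^2$-orthogonal Hodge decomposition of $H_0\Lambda^k(\Omega)$. Given $v \in H_0\Lambda^k(\Omega)\cap H^*\Lambda^k(\Omega)$, split $v = v_B + h + v_K$ with $v_B \in \mathcal B_0^k$, $h \in \mathcal H_0^k$, and $v_K \in \mathcal K^k$. Using $\delta\circ\delta = 0$ on a representation $v_K = \delta\eta$, the co-exact piece satisfies $\delta v_K = 0$, hence $v_K \in H^*\Lambda^k$, and then $v_B + h = v - v_K$ lies in $H^*\Lambda^k$ as well. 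Each of the three summands therefore inherits membership in $H_0\Lambda^k\cap H^*\Lambda^k$, and their $L^2$-norms are controlled by $\|v\|$ by the orthogonality of the decomposition.

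Since $\mathcal H_0^k$ is finite-dimensional on a Lipschitz polyhedron, all norms on it are equivalent, so $\|h\|_{H^s}\lesssim\|h\|\le\|v\|$ for any $s$, and the harmonic component is harmless. Writing $v_B = \dd ^-\sigma$ with $\sigma \in \mathcal K^{k-1}$, the exact piece satisfies $\dd v_B = 0$, $\tr v_B = 0$, and $\delta v_B = \delta v$; dually, $v_K$ satisfies $\delta v_K = 0$, $\tr v_K = 0$, and $\dd v_K = \dd v$. Each of these two systems is an elliptic boundary value problem whose principal part is the Hodge Laplacian $\dd \delta + \delta \dd$, with right-hand side determined by $\dd v$ or $\delta v$ and homogeneous tangential trace. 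If an $H^s$-regularity estimate of Hodge--Dirichlet type were available, summing the three pieces would immediately yield the claim.

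The main obstacle is precisely this $H^s$-shift for the Hodge--Dirichlet problem on a Lipschitz polyhedron. On a smooth domain, classical elliptic regularity delivers $s = 1$, but in the polyhedral setting singularities at edges and corners generally prevent $s = 1$. The sharp fact, which is the core of Theorem~11.2 in~\cite{Mitrea;Mitrea;Taylor2001}, is that some $s \in [1/2, 1]$ depending only on the geometry of $\Omega$ is always admissible; the proof there combines layer-potential representations for differential forms, the Jerison--Kenig/Dahlberg $H^{3/2}$-regularity of the scalar Dirichlet Laplacian on Lipschitz domains, and a careful analysis of how the tangential and normal components of a form decouple on $\partial\Omega$. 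Assembling the three component estimates produces $\|v\|_{H^s} \le C_r(\|v\| + \|\dd v\| + \|\delta v\|)$ with $C_r$ depending only on $\Omega$, establishing the lemma.
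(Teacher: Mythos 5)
The paper offers no internal proof of this lemma: it is imported verbatim as Theorem~11.2 of Mitrea--Mitrea--Taylor, and your argument ultimately rests on exactly the same citation, so the two ``proofs'' coincide in substance. Your preliminary Hodge decomposition is harmless but redundant (the cited theorem already applies directly to the whole intersection space $H_0\Lambda^k(\Omega)\cap H^*\Lambda^k(\Omega)$), and deferring the ``Hodge--Dirichlet $H^s$-shift'' back to the very Theorem~11.2 being quoted makes the reduction circular rather than a genuine simplification.
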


As $H^s\Lambda^k(\Omega), s\in [1/2, 1]$, is compactly embedded into $L^2\Lambda^k(\Omega)$, using the standard compactness argument, we  can  get 
the following Poincar\'e inequality~\cite[Page 23, Theorem 2.2]{Arnold.D;RS.Falk;Winther.R2006} which will play an important role in our analysis.
\begin{lemma}[Poincar\'e Inequality]\label{lem:Poincare}
There exists a constant $C_{pc}>0$, such that
\begin{equation}\label{eq:Poincare}
\| \omega\| \leq C_{pc}\left( \|\dd \omega\| + \|\delta \omega\|  \right),
\end{equation}
for $\omega \in H_0\Lambda^k(\Omega) \cap H^*\Lambda^k(\Omega)$.
\end{lemma}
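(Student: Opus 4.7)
The plan is to argue by contradiction using the standard compactness scheme, relying on the preceding regularity lemma together with assumption (A). Suppose no such constant $C_{pc}$ exists. Then I can extract a sequence $\{\omega_n\} \subset H_0\Lambda^k(\Omega) \cap H^*\Lambda^k(\Omega)$ with
\begin{equation*}
\|\omega_n\| = 1, \qquad \|\dd \omega_n\| + \|\delta \omega_n\| \to 0.
\end{equation*}
By the regularity result (Lemma on Regularity of Intersection of Spaces), $\|\omega_n\|_{H^s}$ is uniformly bounded. Since $H^s\Lambda^k(\Omega)$ embeds compactly into $L^2\Lambda^k(\Omega)$ for $s \in [1/2,1]$, I can pass to a subsequence (not relabeled) converging strongly in $L^2\Lambda^k(\Omega)$ to some $\omega \in L^2\Lambda^k(\Omega)$ with $\|\omega\| = 1$.

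Next, I would check that this limit lies in $H_0\Lambda^k(\Omega) \cap H^*\Lambda^k(\Omega)$ and satisfies $\dd\omega = 0$ and $\delta\omega = 0$. Since $\dd \omega_n \to 0$ and $\delta \omega_n \to 0$ strongly in $L^2$, the sequence $\omega_n$ is Cauchy in the graph norm of the domain of $\dd$ and the domain of $\delta$. Both operators are closed as unbounded operators on $L^2\Lambda$, so $\omega$ belongs to both domains and $\dd\omega = 0$, $\delta\omega = 0$. The trace condition $\tr\omega = 0$ passes to the limit through the closed graph of $\dd$ on $H_0\Lambda^k$, and similarly for $\tr\star\omega = 0$ inherited from $H_0^*\Lambda^k$. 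Hence $\omega \in \mathcal H_0^k$.

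Finally, assumption (A) gives $\mathcal H_0^k = \{0\}$, so $\omega = 0$, contradicting $\|\omega\| = 1$. This produces the desired constant $C_{pc}$. The main obstacle I anticipate is the verification that the limit $\omega$ retains both the Dirichlet trace $\tr\omega = 0$ and the dual trace $\tr\star\omega = 0$ after only $L^2$-strong convergence; this is where the closedness of $\dd$ and $\delta$ viewed as densely defined operators (together with the $L^2$ convergence of $\dd\omega_n$ and $\delta\omega_n$ to zero) must be used carefully rather than just weak compactness, to conclude $\omega \in H_0\Lambda^k(\Omega) \cap H^*\Lambda^k(\Omega)$ and thus $\omega \in \mathcal H_0^k$.
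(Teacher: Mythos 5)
Your argument is precisely the ``standard compactness argument'' that the paper itself invokes without detail (it simply combines Lemma \ref{lem:regularity} with the compact embedding $H^s\Lambda^k(\Omega)\hookrightarrow L^2\Lambda^k(\Omega)$ and cites Arnold--Falk--Winther), so you are following essentially the same route, and the closedness of $\dd$ and $\delta$ as densely defined operators is indeed the right tool to identify the limit. One small simplification: the intersection space is $H_0\Lambda^k(\Omega)\cap H^*\Lambda^k(\Omega)$, not $H_0\Lambda^k(\Omega)\cap H_0^*\Lambda^k(\Omega)$, and the harmonic space $\mathcal H_0^k$ imposes no condition on $\tr\star\omega$, so the dual trace verification you flag as the main obstacle is not actually needed --- closedness of $\delta$ already places the limit in $H^*\Lambda^k(\Omega)$ with $\delta\omega=0$, and assumption ({\bf A}) finishes the contradiction.
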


By the definition of $\mathcal K$,  $\mathcal K =  H_0\Lambda(\Omega) \cap \delta H^*\Lambda^+(\Omega)$, we have $\delta  \mathcal K = 0$ and thus
\begin{equation}
\label{eq:Poincare_d}  \| \omega \|  \leq  C_{pc}\|\dd \omega\|,\qquad  \omega \in \mathcal K.
\end{equation}
Poincar\'e inequality \eqref{eq:Poincare_d} implies that $\mathcal K$ is a Hilbert space equipped with the inner product $\langle \dd(\cdot), \dd(\cdot) \rangle$.

\subsection{The Hodge Laplacian Equation}
The Hodge Laplacian problem reads as: given $f\in L^2\Lambda(\Omega)$, find $u\in H_0\Lambda^k(\Omega) \cap H^*\Lambda^k(\Omega)$ such that
\begin{equation}
\label{eq:Hodge_Lap}
\mathcal L u  = f,
\end{equation}
where $\mathcal L = \dd ^- \delta + \delta^+ \dd$ is called the Hodge Laplacian. 

A direct discretization of \eqref{eq:Hodge_Lap} would require a conforming discretization of the intersection space $H_0\Lambda^k(\Omega) \cap H^*\Lambda^k(\Omega)$. Using Lagrange finite element spaces which are subspaces of $H^1\Lambda^k$ for \eqref{eq:Hodge_Lap} are problematic~\cite{Arnold.D;RS.Falk;Winther.R2010}. For example, when the domain is non-convex or non-smooth, approximations using continuous piecewise linear functions will converge but not converge to the true solution. It might be explained by the  embedding  $H_0\Lambda^k(\Omega) \cap H^*\Lambda^k(\Omega)\hookrightarrow H^s\Lambda^k(\Omega)$. When the domain is non-convex or non-smooth, $s<1$ but the Lagrange finite element spaces will provide approximations converging in $H^1\Lambda^k$.

Introduce a new variable $\sigma = \delta u$. The mixed formulation of the Hodge Laplacian problem is: Given $f \in L^2\Lambda(\Omega)$, find $(\sigma,u) \in H_0\Lambda^{-}(\Omega) \times H_0\Lambda(\Omega)$ such that
\begin{equation}
\label{eq:mixed_formulation_con}
\left\{
\begin{array}{llrlll}
\langle\sigma,\tau\rangle & - & \langle \dd ^- \tau, u\rangle & = & 0 & \text{for all }\tau \in H_0\Lambda^{-}(\Omega),\\
\langle \dd ^- \sigma,v\rangle & + & \langle \dd u , \dd v\rangle & = & \langle f, v \rangle & \text{for all } v \in H_0\Lambda(\Omega).
\end{array}   
\right.
\end{equation}
The saddle point system \eqref{eq:mixed_formulation_con} is well-posed; see \cite[Page 83, Section 7.5]{Arnold.D;RS.Falk;Winther.R2006}. Let $(\sigma,u)$ be the solution of equation \eqref{eq:mixed_formulation_con}.
 Although $u\in H_0\Lambda(\Omega)$ only, the first equation implies $\sigma = \delta u$ holds in $L^2$ and thus $u\in H_0\Lambda(\Omega)\cap H^*\Lambda(\Omega)$. Writing $\sigma = \delta u$ and $\langle \dd ^- \sigma,v\rangle = \langle \sigma, \delta v\rangle$, we can eliminate $\sigma$ from equation \eqref{eq:mixed_formulation_con} and get
$$
\langle \delta u,\delta v\rangle + \langle \dd u , \dd v\rangle  =  \langle f, v \rangle \quad \forall v\in H_0\Lambda(\Omega) \cap H^*\Lambda(\Omega),
$$ 
which is the primary weak formulation of \eqref{eq:Hodge_Lap}.

\subsection{Mixed Finite Element Methods of the Hodge Laplacian Equation} 
Let $\mathcal T_h$ be a shape-regular triangulation of $\Omega$. For an $n$-simplex $K \in\mathcal T_h$, an $(n-1)$-dimensional face of $K$ is an $(n-1)$-simplex generated by $n$ vertexes of $K$.  Let $\mathcal E_h$ be the set of all interior $(n-1)$-dimensional faces in $\mathcal T_h$. For each $n$-simplex $K\in \mathcal T_h$, we define $h_K = |K|^{1/n}$ and 
$h = \max\limits_{K \in \mathcal T_h} \{h_K\}$, where $|K|$ is the $n$-dimensional volume.
For each $(n-1)$-simplex $e \in \mathcal E_h$, we define $h_e = |e|^{1/(n-1)}$ with $|e|$ the $(n-1)$-dimensional volume. 

Let $\mathcal P_r(\mathbb R^n)$ denote the space of polynomials in $n$ variables of degree at most $r$ and $\mathcal H_r(\mathbb R^n)$ of homogeneous polynomial functions of degree $r$. We can then define spaces of polynomial differential forms $\mathcal P_r\Lambda^k(\mathbb R^n)$ and $\mathcal H_r\Lambda^k(\mathbb R^n)$ by using corresponding polynomials as the coefficients. To simplify the notation, we will suppress $\mathbb R^n$ from the notation.
For each integer $r\geq n$, we have the polynomial sub-complex of the de Rham complex
\begin{equation}\label{Pexact}
0\stackrel{}{\longrightarrow}\mathcal  P_r\Lambda^0 \stackrel{\dd }{\longrightarrow} \mathcal P_{r-1}\Lambda^1 \stackrel{\dd}{\longrightarrow} \cdots \stackrel{\dd }{\longrightarrow} \mathcal P_{r-n}\Lambda^n \stackrel{}{\longrightarrow} 0.
\end{equation}
Given a point $x\in \mathbb R^n$, there is a natural identification of $x$ to a vector in $\mathbb R^n$ which can be also identified with a vector in the tangent space $T_x\mathbb R^n$. Thus treat $x$ as a vector in $T_x\mathbb R^n$ and define the Koszul operator $\kappa: \Lambda^k(\mathbb R^n)$ to $\Lambda^{k-1}(\mathbb R^n)$ by the formula
$$
(\kappa\omega)_x(v_1,v_2,\cdots,v_{k-1}) =  \omega_x(x,v_1,\cdots,v_{k-1}).
$$
On the space $\mathcal H_r\Lambda^k$, such $\kappa$ satisfying the identity $\kappa \dd + \dd \kappa = (k+r){\rm id}$ \cite[Theorem 3.1]{Arnold.D;RS.Falk;Winther.R2006} and there is a direct sum 
$$
\mathcal H_r\Lambda^k = \kappa\mathcal H_{r-1}\Lambda^{k+1} \oplus \dd \mathcal H_{r+1}\Lambda^{k-1}.
$$ 
Based on this decomposition, we can introduce the incomplete polynomial differential form 
$$
\mathcal P_r^-\Lambda^k = \mathcal P_{r-1}\Lambda^k + \kappa \mathcal H_{r-1}\Lambda^{k+1}
$$ 
and, for $r\geq 1$, obtain the following sub-complex of the de Rham complex 
$$
0\stackrel{}{\longrightarrow}\mathcal  P^-_r\Lambda^0 \stackrel{\dd }{\longrightarrow} \mathcal P^-_{r}\Lambda^1 \stackrel{\dd}{\longrightarrow} \cdots \stackrel{\dd }{\longrightarrow} \mathcal P^-_{r}\Lambda^n \stackrel{}{\longrightarrow} 0.
$$

For each simplex $K\in \mathcal T_h$, we denoted by $\mathcal P_r\Lambda^k(K)$ or $\mathcal P_r^-\Lambda^k(K)$, the spaces of forms obtained by restricting the forms in $\mathcal P_r\Lambda^k(\mathbb R^n)$ and $\mathcal P_r^-\Lambda^k(\mathbb R^n)$, respectively, to $K$. 
We then introduce the finite element spaces
\begin{align*}
\mathcal P_r\Lambda^k(\mathcal T_h) &= \{\omega\in H\Lambda^k(\Omega)|\ \omega|_K \in \mathcal P_r\Lambda^k(K),\ K \in \mathcal T_h\},\\
\mathcal P_r^-\Lambda^k(\mathcal T_h) &= \{\omega\in H\Lambda^k(\Omega)|\ \omega|_K \in \mathcal P_r^-\Lambda^k(K),\ K \in \mathcal T_h\}.
\end{align*}
For each $k$, we choose $V_h^k = \mathcal P_r\Lambda^k(\mathcal T_h) \cap H_0\Lambda^k(\Omega)$ or $V_h^k = \mathcal P_r^-\Lambda^k(\mathcal T_h)\cap H_0\Lambda^k(\Omega)$ so that $(V_h^k, \dd^k)$ forms a sub-complex of $(H_0\Lambda^k(\Omega), \dd^k)$. Again for the consecutive spaces, we shall use the short sequence $V_h^{-} \stackrel{\dd ^-}{\longrightarrow}  V_h\stackrel{\dd}{\longrightarrow}  V_h^+$.

We define the discrete co-derivative $\delta_h:V_h \mapsto V_h^{-}$ as the $L^2$-adjoint of $\dd ^{-}: V_h^{-} \to V_h$, i.e. given $w_h \in V_h$, $\delta_h w_h$ is the unique element in $V_h^{-}$ such that
\begin{equation}\label{eq:def_delta_h}
\langle \delta_h w_h , v_h\rangle = \langle w_h, \dd ^{-} v_h \rangle, \quad \text{for all } v_h \in V_h^{-}.
\end{equation} 
To compute $\delta_h w_h$, we need to invert the so-called mass matrix and thus in general $\delta_h w_h$ is a non-local operator. Note that in the finite dimensional space setting, $\delta_h$ is a bounded linear operator although its norm is not uniform to $h$.
Recall that the adjoint $\delta$ satisfies $\langle \delta w_h,v \rangle = \langle w_h,\dd ^{-} v \rangle,\ \text{for all }v \in H_0\Lambda^{-}(\Omega)$. For the discrete co-derivative $\delta_h w_h$ such relation holds only for a subspace $V_h \subset H_0\Lambda(\Omega)$. Therefore, the space $V_h$ is a conforming discretization of $H_0\Lambda(\Omega)$ but not a conforming discretization of $H^*\Lambda^k(\Omega)$.

The discrete Hodge decomposition of $V_h$ is
\begin{equation}
\label{eq:Hodge_decom_dis}
V_h = \mathcal Z_{0,h} \oplus^{\bot_{L^2}} \mathcal K_h,
\end{equation}
where $\mathcal Z_{0,h} = \ker(\dd) \cap V_h \subset \mathcal Z_0$ and $\mathcal K_h$ is the $L^2$ orthogonal complement of $\mathcal Z_{0,h}$ in $V_h$. Since we consider the Hodge Laplacian without harmonic forms, $\mathcal Z_{0,h}= \dd V_h^{-}$ and equation \eqref{eq:def_delta_h} implies that $\mathcal K_h$ is the kernel of $\delta_h$. Generally $\mathcal Z_{0,h}$ is a proper subspace of $\mathcal Z_0$, but $\mathcal K_h \not\subset \mathcal K$. For instance, when $V_h\subset H_0(\curl; \Omega)$ is the N\'{e}d\'{e}lec edge element space \cite{Nedelec.J1980,Nedelec.J1986}, the function in $\mathcal K_h$ is called discrete divergence free which is in general not divergence free in the continuous level.

We have the following discrete Poincar\'e inequality, cf. 
\cite[Theorem 5.11]{Arnold.D;RS.Falk;Winther.R2006}. 
\begin{lemma}[Discrete Poincar\'e Inequality for $\dd$]
\label{lem:Poincare_Gen}
There exists a constant $C_p > 0$  independent of $h$ such that
\begin{equation}\label{eq:Poincare_K_h}
\|w_h\| \leq C_p \|\dd w_h\|,\qquad w_h \in \mathcal K_h.
\end{equation}
\end{lemma}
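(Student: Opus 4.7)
The plan is to mimic the continuous Poincaré inequality \eqref{eq:Poincare_d} on $\mathcal K$ by comparing $w_h\in\mathcal K_h$ to a continuous function $z\in\mathcal K$ with the same exterior derivative, and then exploit the $L^2$-orthogonality built into the discrete Hodge decomposition \eqref{eq:Hodge_decom_dis} together with a bounded commuting quasi-interpolant $\pi_h:H_0\Lambda\to V_h$. The assumption \textbf{(A)} (no harmonic forms) and exactness of the sequence will be used to guarantee surjectivity of $\dd$ onto $\mathcal Z_0^+$ and the identification $\mathcal Z_{0,h}=\dd V_h^-$.

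First, given $w_h\in\mathcal K_h\subset V_h\subset H_0\Lambda(\Omega)$, I would solve a continuous auxiliary problem: find $z\in\mathcal K$ such that $\dd z=\dd w_h$. Such $z$ exists and is unique because $\dd w_h\in\mathcal Z_0^+=\dd\mathcal K$ (exactness plus $\mathcal H_0=0$), and the continuous Poincaré inequality \eqref{eq:Poincare_d} gives
\begin{equation*}
\|z\|\le C_{pc}\|\dd z\|=C_{pc}\|\dd w_h\|.
\end{equation*}
By construction $\dd(w_h-z)=0$, so $w_h-z\in\mathcal Z_0$.

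Second, I would invoke a uniformly $L^2$-bounded commuting quasi-interpolant $\pi_h:H_0\Lambda\to V_h$ (of Schöberl/Christiansen--Winther/AFW type, whose existence and properties are exactly the ``bounded commuting quasi-interpolants'' the paper advertises and will state in Section~2). Since $\pi_h$ is a projection on $V_h$, we have $\pi_h w_h=w_h$; since $\pi_h$ commutes with $\dd$, we have $\dd\pi_h(w_h-z)=\pi_h\dd(w_h-z)=0$, so $w_h-\pi_h z=\pi_h(w_h-z)\in\mathcal Z_{0,h}$. Using $w_h\perp_{L^2}\mathcal Z_{0,h}$ (the defining property of $\mathcal K_h$) gives $\langle w_h,w_h-\pi_h z\rangle=0$, hence
\begin{equation*}
\|w_h\|^2=\langle w_h,\pi_h z\rangle\le\|w_h\|\,\|\pi_h z\|\le C_\pi\|w_h\|\,\|z\|\le C_\pi C_{pc}\|w_h\|\,\|\dd w_h\|,
\end{equation*}
where $C_\pi$ is the $h$-uniform $L^2$ operator norm of $\pi_h$. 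Dividing through yields the claim with $C_p=C_\pi C_{pc}$.

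The main obstacle is ensuring the existence of a quasi-interpolant $\pi_h$ that is simultaneously an $L^2$-bounded (uniformly in $h$) projection onto $V_h$ and commutes with $\dd$; standard nodal interpolants fail the $L^2$-boundedness on rough forms. This step is the whole reason to invoke the AFW/Schöberl/Christiansen--Winther construction rather than canonical interpolation, and once it is in hand the rest of the argument is essentially a two-line application of Cauchy--Schwarz and the continuous Poincaré inequality. A secondary subtlety is the use of assumption \textbf{(A)}: without it one would have to project $\dd w_h$ onto $\dd\mathcal K$ modulo $\mathcal H_0^+$, which would complicate both the choice of $z$ and the identification $\mathcal Z_{0,h}=\dd V_h^-$.
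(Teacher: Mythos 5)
Your proof is correct and is precisely the standard argument behind the result the paper cites (Arnold--Falk--Winther, Theorem 5.11) rather than proves: lift $\dd w_h$ to $z\in\mathcal K$ via the continuous Poincar\'e inequality, then use an $L^2$-bounded commuting \emph{projection} (the operator $\tilde\Pi_h$ with properties \eqref{AFWprojection}, not the non-projection quasi-interpolant $\Pi_h$) so that $w_h-\tilde\Pi_h z\in\mathcal Z_{0,h}\perp w_h$. The only cosmetic slip is the identification $\mathcal Z_0^+=\dd\mathcal K$, which is unnecessary since $\dd w_h\in\dd H_0\Lambda=\dd\mathcal K=\mathcal B_0^+$ holds directly; the argument is otherwise exactly the one the paper relies on.
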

\smallskip

As the adjoint operator of $\dd ^-: V_h^- \to V_h$, we have the following discrete Poincar\'e inequality for $\delta_h$ as well. 
\begin{lemma}[Discrete Poincar\'e Inequality for $\delta_h$]\label{lem:Poincare_dis}
Let $C_p$ be the constant in \eqref{eq:Poincare_K_h}. Then we have
\begin{equation}
\label{eq:Poincare_dis}
\|v_h\| \leq C_p \|\delta_{h} v_h\|, \qquad v_h \in \mathcal Z_{0,h}.
\end{equation}
\end{lemma}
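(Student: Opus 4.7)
The plan is to establish the inequality by duality between $\delta_h$ and $\dd^-$, reducing it to the discrete Poincar\'e inequality of Lemma~\ref{lem:Poincare_Gen} one level below. The key observation is that $\delta_h$ is by construction the $L^2$-adjoint of $\dd^-: V_h^-\to V_h$, and that under assumption (\textbf{A}) we have $\mathcal Z_{0,h} = \dd^- V_h^-$, so every $v_h \in \mathcal Z_{0,h}$ admits a representation $v_h = \dd^-\tau_h$ with $\tau_h \in V_h^-$.

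First I would fix $v_h \in \mathcal Z_{0,h}$ and choose the preimage $\tau_h$ carefully. Applying the Hodge-type splitting \eqref{eq:Hodge_decom_dis} one level lower, write $V_h^- = \mathcal Z_{0,h}^- \oplus^{\bot_{L^2}} \mathcal K_h^-$ and select $\tau_h$ in $\mathcal K_h^-$, the orthogonal complement of $\ker(\dd^-)\cap V_h^-$ inside $V_h^-$, which is possible since the kernel component of any preimage can be subtracted without changing $\dd^-\tau_h$. Next, using the definition \eqref{eq:def_delta_h} of $\delta_h$ together with Cauchy--Schwarz,
$$
\|v_h\|^2 \;=\; \langle v_h, \dd^-\tau_h\rangle \;=\; \langle \delta_h v_h, \tau_h\rangle \;\leq\; \|\delta_h v_h\|\,\|\tau_h\|.
$$
Finally I would invoke Lemma~\ref{lem:Poincare_Gen} applied to the shifted sub-complex $V_h^{--}\stackrel{\dd^{--}}{\longrightarrow} V_h^- \stackrel{\dd^-}{\longrightarrow} V_h$: since $\tau_h \in \mathcal K_h^-$, this gives $\|\tau_h\| \leq C_p \|\dd^-\tau_h\| = C_p \|v_h\|$. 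Combining the two inequalities and dividing by $\|v_h\|$ (the case $v_h=0$ being trivial) yields $\|v_h\| \leq C_p\|\delta_h v_h\|$, which is \eqref{eq:Poincare_dis}.

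I do not anticipate a genuine obstacle here: the only delicate point is the choice of $\tau_h$ in $\mathcal K_h^-$ so that Lemma~\ref{lem:Poincare_Gen} is applicable, and the fact that the same constant $C_p$ can be used at both levels, which is justified because Lemma~\ref{lem:Poincare_Gen} is a structural statement about any sub-complex of the discrete de Rham complex without harmonic forms, so it applies verbatim after re-indexing (or one simply takes $C_p$ to be the maximum of the Poincar\'e constants over the finitely many levels of the complex under consideration).
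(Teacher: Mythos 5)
Your proof is correct and is essentially identical to the paper's own argument: both choose a preimage $\tau_h\in\mathcal K_h^-$ with $v_h=\dd^-\tau_h$, use the adjoint relation \eqref{eq:def_delta_h} with Cauchy--Schwarz to get $\|v_h\|^2\leq\|\delta_h v_h\|\,\|\tau_h\|$, and close with the discrete Poincar\'e inequality \eqref{eq:Poincare_K_h} applied one level down. Your remark about the constant $C_p$ being taken uniformly over the finitely many levels of the complex matches the paper's implicit convention.
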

\begin{proof}
For any $v_h \in \mathcal Z_{0,h}$, there exists $\rho_h \in \mathcal K_h^{-}$, such that $v_h = \dd ^- \rho_h$. By Poincar\'e inequality \eqref{eq:Poincare_K_h}, we have
$$
 \|\rho_h\| \leq C_p \|\dd ^- \rho_h\| = C_p \|v_h\|.
$$
Then
$$
\|v_h\|^2 = \langle v_h, \dd ^-\rho_h \rangle = \langle \delta_h v_h, \rho_h \rangle \leq \|\delta_h v_h\|\|\rho_h\|  \leq C_p  \|\delta_h v_h\| \|v_h\|.
$$
The desired result then follows.
\end{proof}
 
Combination the  Hodge decomposition and the above  discrete Poincar\'e inequalities, we obtain the following discrete Poincar\'e inequality on the finite element space $V_h$ which is analogue to the continuous  one; c.f.  Lemma \ref{lem:Poincare}.
\begin{theorem}[Discrete Poincar\'e Inequality]\label{th:discretePoincare}
There exists a constant $C_p > 0$  independent of $h$ such that
\begin{equation}\label{discretePoinare}
 \| v_h \|^2 \leq C_p^2\left (\|\dd v_h \|^2 + \|\delta_h v_h\|^2 \right ),\qquad v_h \in V_h.
\end{equation}
\end{theorem}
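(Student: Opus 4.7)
The plan is to combine the two preceding Poincar\'e-type inequalities (Lemma \ref{lem:Poincare_Gen} and Lemma \ref{lem:Poincare_dis}) using the discrete Hodge decomposition \eqref{eq:Hodge_decom_dis} of $V_h$. Given any $v_h \in V_h$, I first write
$$
v_h = z_h + k_h, \qquad z_h \in \mathcal Z_{0,h}, \quad k_h \in \mathcal K_h,
$$
where the two components are $L^2$-orthogonal by construction. Pythagoras then yields the clean splitting
$$
\|v_h\|^2 = \|z_h\|^2 + \|k_h\|^2,
$$
which will allow me to treat the two summands independently.

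Next I would estimate $\|k_h\|$ and $\|z_h\|$ separately. For the $\mathcal K_h$-component, since $z_h \in \mathcal Z_{0,h}$ satisfies $\dd z_h = 0$, we have $\dd v_h = \dd k_h$, so Lemma \ref{lem:Poincare_Gen} gives
$$
\|k_h\| \leq C_p \|\dd k_h\| = C_p \|\dd v_h\|.
$$
For the $\mathcal Z_{0,h}$-component, I use the key observation (already noted after \eqref{eq:Hodge_decom_dis}) that $\mathcal K_h = \ker \delta_h$, which follows directly from the definition \eqref{eq:def_delta_h} of $\delta_h$ as the $L^2$-adjoint of $\dd^-$ together with the characterization $\mathcal Z_{0,h} = \dd^- V_h^-$. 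Consequently $\delta_h k_h = 0$, and hence $\delta_h v_h = \delta_h z_h$. Applying Lemma \ref{lem:Poincare_dis} to $z_h \in \mathcal Z_{0,h}$ then gives
$$
\|z_h\| \leq C_p \|\delta_h z_h\| = C_p \|\delta_h v_h\|.
$$

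Squaring and adding the two bounds yields
$$
\|v_h\|^2 = \|z_h\|^2 + \|k_h\|^2 \leq C_p^2 \left( \|\dd v_h\|^2 + \|\delta_h v_h\|^2 \right),
$$
which is the desired inequality. There is no real obstacle here: the argument is the standard orthogonal-decomposition trick, and all the technical work has been front-loaded into the two preceding lemmas and the identification $\mathcal K_h = \ker \delta_h$. The only point worth highlighting carefully is why $\delta_h$ annihilates $\mathcal K_h$, since this is what makes the cross term vanish and permits reusing the same constant $C_p$ on both summands.
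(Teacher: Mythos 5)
Your proof is correct and follows essentially the same route as the paper: decompose $v_h$ via the discrete Hodge decomposition \eqref{eq:Hodge_decom_dis}, observe that $\dd v_h = \dd k_h$ and $\delta_h v_h = \delta_h z_h$, apply Lemmas \ref{lem:Poincare_Gen} and \ref{lem:Poincare_dis} to the two components, and sum the squares using $L^2$-orthogonality. Your added remark justifying $\mathcal K_h = \ker\delta_h$ is a point the paper also makes, just earlier in the text rather than inside the proof.
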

\begin{proof}
For any $v_h\in V_h$, by the discrete Hodge decomposition~\eqref{eq:Hodge_decom_dis}, there exist $z_h \in \mathcal Z_{0,h}$ and $\omega_h \in \mathcal K_h$, such that
$$
v_h = z_h \oplus^{\bot_{L^2}} \omega_h\qquad\text{and}\qquad \delta_h v_h = \delta_hz_h,\ \dd v_h = \dd \omega_h.
$$
Lemmas \ref{lem:Poincare_dis} and \ref{lem:Poincare_Gen} imply that
$$
\|z_h\| \leq C_p \|\delta_hz_h\| = C_p\|\delta_h v_h\|,\qquad\|\omega_h\|\leq C_p\|\dd \omega_h\| = C_p\|\dd v_h\|.
$$
Take square and sum together to get the desired inequality.
\end{proof}

These Poincar\'e inequalities imply the isomorphisms of the discrete spaces
$$
\mathcal K_h^- \mathrel{\mathop{\rightleftarrows}^{\dd ^-}_{\delta_h}}   \mathcal Z_{0,h},   \quad
\mathcal K_h \mathrel{\mathop{\rightleftarrows}^{\dd}_{\delta_h^+}}   \mathcal Z_{0,h}^+.
$$
 
The mixed finite element method for \eqref{eq:mixed_formulation_con} is constructed as follows: Given $f\in L^2\Lambda^k$, find $(\sigma_h,u_h) \in V_h^{-} \times V_h$ such that
\begin{equation}
\label{eq:mixed_formulation_dis}
\left\{
\begin{array}{llrlll}
\langle\sigma_h,\tau_h\rangle & - & \langle \dd ^- \tau_h, u_h\rangle & = & 0 & \text{for all }\tau_h \in V_h^{-},\\
\langle \dd ^- \sigma_h,v_h\rangle & + & \langle \dd u_h , \dd v_h\rangle & = & \langle f, v_h \rangle & \text{for all } v_h \in V_h.
\end{array}   
\right.
\end{equation}
By definition, we can write the first equation as $\sigma_h = \delta_h u_h$ and the second equation as
\begin{equation}\label{eq:discreteprimary}
\langle \delta_h u_h, \delta_h v_h \rangle + \langle \dd u_h, \dd v_h\rangle  =  \langle f, v_h \rangle, \quad \text{for all } v_h \in V_h.
\end{equation}
The well-posedness of equation \eqref{eq:discreteprimary} follows from the discrete Poincar\'e inequalities, c.f., Theorem \ref{th:discretePoincare}.

We will consider the convergence of the adaptive finite element methods for the Hodge Laplacian equation. When $k = 0$, $H_0\Lambda^0(\Omega) = H_0^1(\Omega)$, the Hodge Laplacian equation is the scalar Poisson equation, for which the convergence theory of AFEM has been well-studied. When $k = n$, $H_0\Lambda^n(\Omega) = L_0^2(\Omega)$, which is the subspace of $L^2$ integrable functions with vanishing means, the Hodge Laplacian equation is the mixed formulation of Poisson equation for which the convergence of AMFEM can be found in e.g.~\cite{Chen;Holst;Xu2009,Huang;Xu2012}. When $k=n$, $\dd = 0$ in the short exact sequence \eqref{short}. Therefore $\|\dd ^{-}(\sigma - \sigma_h)\|$ can be bound by the so-called data oscillation $\|h(f-f_h)\|$. An orthogonality of $\|\sigma - \sigma_h\|$ can be established and the convergence and optimality for $\|\sigma - \sigma_h\|$, which is now decoupled with the error $u-u_h$, is obtained consequently; see~\cite{Chen;Holst;Xu2009,Huang;Xu2012,Holst;Mihalik;Ryan;2013}. In this paper, we focus on the case $1\leq k \leq n-1$. Because of different norms used, the results in this paper are not consistent with these works for $k = n$.

\subsection{Regular Decompositions}
The Hodge decomposition \eqref{eq:Hodge_decom} is $L^2$-orthogonal but each component is merely in $L^2$. To be able to define interpolations to the finite element spaces, we need the following regular decomposition in which each component now is in $H^1$.
\begin{lemma}[Regular Decomposition~\cite{Kress1971,Girault1986,Amrouche1998,Hiptmair2002,Hiptmair;Xu2007,Costabel;McIntosh2010,Hiptmair;Li;Zou2012,Demlow2014}]
\label{lem:regular_decom}
Assume that $\Omega$ is a bounded Lipschitz domain in $\mathbb R^n$. For any integer $1 \leq k \leq n-1$, and any $v \in H_0\Lambda^k(\Omega)$, there exist $\varphi \in H_0^1\Lambda^{k-1}(\Omega)$ and $\phi \in H_0^1\Lambda^k(\Omega)$ such that $ v = \dd\varphi + \phi$ and
\begin{equation}\label{eq:regular_decom}
\|\varphi\|_{H^1} + \|\phi\|_{H^1} \lesssim \|v\|_{H\Lambda}.
\end{equation}
\end{lemma}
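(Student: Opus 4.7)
The strategy is a three-step reduction: extend $v$ by zero to an auxiliary smooth domain, construct the regular decomposition there via a regularized Poincar\'e operator, and then restore vanishing trace on $\partial \Omega$.

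First, since $\tr v = 0$, the zero extension $\tilde v$ of $v$ to $\mathbb R^n$ lies in $H\Lambda^k(\mathbb R^n)$ with $\|\tilde v\|_{H\Lambda(\mathbb R^n)} = \|v\|_{H\Lambda(\Omega)}$. Fix a ball $\tilde \Omega \supset \bar \Omega$ and restrict to $\tilde \Omega$, noting that $\tilde v \equiv 0$ near $\partial \tilde \Omega$.

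Second, on the smooth convex domain $\tilde \Omega$ apply the regularized Poincar\'e operators $R^j : L^2\Lambda^j(\tilde \Omega) \to H^1\Lambda^{j-1}(\tilde \Omega)$ of Costabel--McIntosh~\cite{Costabel;McIntosh2010}. These are bounded and satisfy the homotopy identity $\dd R^j + R^{j+1} \dd = \mathrm{Id}$ on $H\Lambda^j(\tilde \Omega)$. Setting $\tilde \varphi := R^k \tilde v \in H^1\Lambda^{k-1}(\tilde \Omega)$ and $\tilde \phi := R^{k+1}(\dd \tilde v) \in H^1\Lambda^k(\tilde \Omega)$ yields the global identity $\tilde v = \dd \tilde \varphi + \tilde \phi$, together with the estimate $\|\tilde \varphi\|_{H^1(\tilde \Omega)} + \|\tilde \phi\|_{H^1(\tilde \Omega)} \lesssim \|\tilde v\|_{H\Lambda(\tilde \Omega)} = \|v\|_{H\Lambda}$.

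Third, correct the trace on $\partial \Omega$. Use the boundary-preserving variant of the Poincar\'e operator in~\cite{Costabel;McIntosh2010}, which is built by convolving the Cartan homotopy formula with a \emph{bubble} function compactly supported in $\Omega$, so that its output automatically has vanishing trace on $\partial \Omega$. This produces $\varphi \in H_0^1\Lambda^{k-1}(\Omega)$ and $\phi \in H_0^1\Lambda^k(\Omega)$ with $v = \dd \varphi + \phi$ on $\Omega$ and the desired norm estimate, where the key structural input is that $\tilde v \equiv 0$ outside $\Omega$ so that $\dd \tilde \varphi + \tilde \phi = 0$ there; this compatibility makes the trace data on $\partial \Omega$ consistent with a homogeneous Dirichlet gauge.

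The main obstacle is step three: constructing a bounded right inverse of $\dd$ on a Lipschitz domain that simultaneously lands in $H^1$ \emph{and} respects the homogeneous Dirichlet condition. This is the technical heart and is developed in \cite{Costabel;McIntosh2010} (using star-shapedness and a partition of unity in the general Lipschitz case); here we invoke it as a black box. Once available, steps one and two are essentially bookkeeping.
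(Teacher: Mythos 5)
A preliminary remark: the paper does not actually prove this lemma; it is imported wholesale from the literature (hence the long citation list in the statement), so there is no in-paper argument to compare against and your sketch must stand on its own. Your Steps 1 and 2 are sound: the vanishing trace makes the zero extension $\tilde v$ lie in $H\Lambda^k(\mathbb R^n)$ with $\dd \tilde v$ equal to the zero extension of $\dd v$, and on a ball the regularized Poincar\'e operator of \cite{Costabel;McIntosh2010} gives $\tilde v = \dd\tilde\varphi + \tilde\phi$ with $H^1$ control. The gap is in Step 3, which is the only step that produces the homogeneous boundary conditions --- and that is the entire content of the lemma, since the decomposition without the subscript $0$ is classical. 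Your description of the ``boundary-preserving variant'' is not correct: averaging the Cartan/Koszul homotopy over a bubble function supported in $\Omega$ is precisely the regularized Poincar\'e operator, and its output does \emph{not} vanish on $\partial\Omega$ (the value of $R\omega$ at $x$ depends on $\omega$ along the segments $[a,x]$ with $a$ in the bubble, so $R\omega$ is generically nonzero outside $\Omega$ even when $\supp\omega\subset\bar\Omega$). The operator that preserves ``support in $\bar\Omega$'' is the regularized \emph{Bogovskii} operator $B$, obtained by reversing the direction of integration in the homotopy (integrating along the ray from the bubble through $x$ and beyond); \cite{Costabel;McIntosh2010} prove $B:\tilde H^{s}\Lambda^{j}(\Omega)\to\tilde H^{s+1}\Lambda^{j-1}(\Omega)$ together with $\dd B + B\dd = {\rm id}$. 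Once that is the black box, your Steps 1--2 are superfluous: set $\varphi = Bv$ and $\phi = B(\dd v)$ directly on $\Omega$, the trace condition on $v$ being used only to ensure that $v$ and $\dd v$ lie in the correct zero-extendable data class.

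Separately, the fallback you gesture at --- that $\dd\tilde\varphi + \tilde\phi = 0$ outside $\Omega$ ``makes the trace data consistent with a homogeneous Dirichlet gauge'' --- is an assertion, not an argument. To convert it into vanishing traces of the individual components you would need to extend $\tilde\varphi|_{\tilde\Omega\setminus\bar\Omega}$ into $\Omega$ within the space $\{\psi\in H^1\Lambda^{k-1}(\Omega):\ \dd\psi\in H^1\Lambda^{k}(\Omega)\}$ and then subtract the extension from $\tilde\varphi$ and add its differential to $\tilde\phi$; the required extension is the universal extension theorem of \cite{Hiptmair;Li;Zou2012} (also in the citation list), which you neither invoke nor replace. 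As written, Step 3 therefore contains a genuine gap, although the overall strategy is recoverable by substituting the Bogovskii operator for the Poincar\'e operator.
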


A combination of Hodge decomposition \eqref{eq:Hodge_decom} and regular decomposition \eqref{eq:regular_decom} leads to the following decomposition.
\begin{lemma}[Decomposition]\label{lem:Hodge_Regular_dec}
For any $v \in H_0\Lambda^k(\Omega)$ $(1 \leq k \leq n - 1)$, there exist $\rho \in H_0\Lambda^{k-1}(\Omega)$, $\omega\in H_0\Lambda^k(\Omega)$, $\varphi \in H_0^1\Lambda^{k-1}(\Omega)$ and $\phi \in H_0^1\Lambda^k(\Omega)$ such that
$$
v = \dd \rho \oplus^{\bot_{L^2}} \omega,\qquad \omega = \dd \varphi + \phi,
$$
and
$$
\|\varphi \|_{H^1} + \|\phi\|_{H^1} \lesssim \|\dd v\|.
$$
\end{lemma}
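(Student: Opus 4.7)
The plan is to chain the Hodge decomposition \eqref{eq:Hodge_decom} with the regular decomposition of Lemma \ref{lem:regular_decom}, using Assumption (\textbf{A}) and the Poincar\'e inequality \eqref{eq:Poincare_d} to convert the $H\Lambda$-norm bound coming out of the regular decomposition into a bound in $\|\dd v\|$ alone.

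First I would invoke the Hodge decomposition \eqref{eq:Hodge_decom} of $H_0\Lambda^k(\Omega)$. Because Assumption (\textbf{A}) kills the harmonic subspace $\mathcal H_0^k$ for $1\leq k\leq n-1$, any $v\in H_0\Lambda^k(\Omega)$ admits a unique $L^2$-orthogonal splitting $v=\dd\rho \oplus^{\bot_{L^2}} \omega$ with $\dd\rho\in\mathcal B_0^k$ (for some $\rho\in H_0\Lambda^{k-1}(\Omega)$) and $\omega\in \mathcal K^k$. Since $\dd\circ\dd^{-}=0$, applying $\dd$ gives $\dd\omega = \dd v$.

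Next I would apply the regular decomposition (Lemma \ref{lem:regular_decom}) to $\omega\in H_0\Lambda^k(\Omega)$, which is permitted because $1\leq k\leq n-1$. This yields $\varphi\in H_0^1\Lambda^{k-1}(\Omega)$ and $\phi\in H_0^1\Lambda^k(\Omega)$ with $\omega=\dd\varphi+\phi$ and
$$
\|\varphi\|_{H^1}+\|\phi\|_{H^1}\lesssim \|\omega\|_{H\Lambda}.
$$

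Finally I would upgrade $\|\omega\|_{H\Lambda}\lesssim\|\dd v\|$. Since $\omega\in\mathcal K^k$, the Poincar\'e inequality \eqref{eq:Poincare_d} gives $\|\omega\|\leq C_{pc}\|\dd\omega\|=C_{pc}\|\dd v\|$. Combining with $\|\dd\omega\|=\|\dd v\|$ we obtain
$$
\|\omega\|_{H\Lambda}^2=\|\omega\|^2+\|\dd\omega\|^2\leq (C_{pc}^2+1)\|\dd v\|^2,
$$
so $\|\varphi\|_{H^1}+\|\phi\|_{H^1}\lesssim \|\dd v\|$, which is the desired estimate.

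There is no real obstacle here: the substance is contained in the cited Hodge and regular decompositions, and the only nontrivial move is observing that the $\mathcal K^k$-component carries all of $\dd v$ and, by Poincar\'e, has $L^2$-norm controlled by $\|\dd v\|$, which is precisely where Assumption (\textbf{A}) enters (otherwise a harmonic piece, uncontrolled by $\dd v$, would appear in the decomposition and break the bound).
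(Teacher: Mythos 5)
Your proposal is correct and follows exactly the paper's own argument: Hodge decomposition to split off the $\mathcal K^k$-component $\omega$ with $\dd\omega=\dd v$, regular decomposition of $\omega$, and the Poincar\'e inequality \eqref{eq:Poincare_d} to convert $\|\omega\|_{H\Lambda}$ into $\|\dd v\|$. Your closing remark about where Assumption (\textbf{A}) enters is a correct and slightly more explicit observation than the paper makes.
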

\begin{proof}
For any $v \in H_0\Lambda^k(\Omega)$, Hodge decomposition \eqref{eq:Hodge_decom} implies that there exist $\rho \in H_0\Lambda^{k-1}(\Omega)$ and $\omega\in H_0\Lambda^k(\Omega)$, such that
$$
v = \dd \rho \oplus^{\bot_{L^2}} \omega\quad \text{ and thus }\quad \dd \omega = \dd v.
$$
Then the regular decomposition \eqref{eq:regular_decom} of $\omega$ leads to the existence of $\varphi \in H_0^1\Lambda^{k-1}(\Omega)$ and $\phi \in H_0^1\Lambda^k(\Omega)$ such that
$$
\omega = \dd \varphi + \phi\quad \text{ and }\quad \|\varphi\|_{H^1} + \|\phi\|_{H^1} \lesssim \|\omega\|_{H\Lambda}.
$$
Furthermore use Poincar\'e inequality \eqref{eq:Poincare_d} for $\omega \in \mathcal K$, we have
$$
 \|\varphi\|_{H^1} + \|\phi\|_{H^1} \lesssim \|\omega\|_{H\Lambda} \lesssim \|\dd \omega\| = \|\dd v\|.
$$
\end{proof}

\subsection{Commuting Quasi-interpolations}
We review several interpolations connecting the differential forms in the  continuous and  discrete levels. The most important properties are the commuting  diagram shown in the figure below.

\begin{diagram}
\cdots & \rTo^{\dd}  & \Lambda^k(\Omega)    & \rTo^{\dd}  &  \Lambda^{k+1}(\Omega) & \rTo^{\dd}  & \cdots \\
&  & \dTo^{\Pi_h^k}              &                  &  \dTo^{\Pi_h^{k+1}} &  &  \\
\cdots & \rTo^{\dd}  &  V_h^k                          & \rTo^{\dd}   &  V_h^{k+1} &\rTo^{\dd}  & \cdots
\end{diagram}

The canonical interpolations, i.e., interpolations  using degree of freedoms in the construction of finite element spaces, requires higher smoothness than merely in $H\Lambda^k$. For example, for Lagrange elements, the nodal values only exists for function in $H^{k}, k> n/2\geq 1$. Quasi-interpolations using average in a neighborhood of a vertex and still remain the commuting property can be found in Sch\"oberl \cite{Schoberl.J2001}. We shall use the following variants adapted to the homogenous boundary condition.

\begin{lemma}[Quasi-Interpolations~\cite{Schoberl2007,Demlow2014}]
\label{lem:quasi_inter}
There exists a sequence of operators $\Pi_h^k: H_0\Lambda^k(\Omega)\mapsto V_h^k$ for $1 \leq k \leq n-1$ satisfying 
$$
\dd^k \ \Pi_h^k = \Pi_h^{k+1} \dd^k ,
$$ 
and
for any $v \in H_0\Lambda^k(\Omega)$,  with $\|v\|_{H\Lambda} \leq 1$, there exist $\varphi \in H_0^1\Lambda^{k-1}(\Omega)$ and $z \in H_0^1\Lambda^k(\Omega)$ such that
$$
v  = \dd \varphi + z,\qquad  \qquad \Pi_h v = \dd \Pi_h \varphi + \Pi_h z,
$$
and
\begin{align}\label{eq:quasi-inter}
\sum\limits_{K \in \mathcal T_h}& \Big[  h_K^{-2}\left(\|\varphi - \Pi_h\varphi\|_K^2 + \|z - \Pi_h z\|_K^2 \right) \\
&  + h_K^{-1} \left(  \|\tr(\varphi - \Pi_h \varphi)\|_{\partial K}^2 + \|\tr(z - \Pi_h z)\|_{\partial K}^2 \right) \Big] \lesssim 1.  \nonumber
\end{align}
\end{lemma}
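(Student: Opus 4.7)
The plan is to obtain $\Pi_h$ by combining two established ingredients: a commuting smoothed quasi-interpolation on $H_0\Lambda^k(\Omega)$ in the sense of Sch\"oberl, and the regular decomposition already recorded in Lemma~\ref{lem:regular_decom}.

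First I would invoke Sch\"oberl's construction: there exists $\Pi_h^k: H_0\Lambda^k(\Omega)\to V_h^k$, built by composing a smoothing against a scaled, boundary-respecting kernel with the canonical degrees of freedom, such that $\dd^k\Pi_h^k = \Pi_h^{k+1}\dd^k$ on $H_0\Lambda^k$. Moreover, when applied to $w\in H_0^1\Lambda^k(\Omega)$ the operator admits the element-local estimate
\begin{equation*}
h_K^{-1}\|w - \Pi_h w\|_K + h_K^{-1/2}\|\tr(w - \Pi_h w)\|_{\partial K} \lesssim \|w\|_{H^1(\omega_K)},
\end{equation*}
where $\omega_K$ is a shape-regular macro-patch around $K$ of uniformly bounded overlap. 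Squaring and summing over $K\in\mathcal T_h$ then gives $\sum_K[\,\cdots\,]\lesssim\|w\|_{H^1(\Omega)}^2$.

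Second, given $v\in H_0\Lambda^k(\Omega)$ with $\|v\|_{H\Lambda}\le 1$, I apply Lemma~\ref{lem:regular_decom} to extract $\varphi\in H_0^1\Lambda^{k-1}(\Omega)$ and $z\in H_0^1\Lambda^k(\Omega)$ with $v=\dd\varphi + z$ and $\|\varphi\|_{H^1}+\|z\|_{H^1}\lesssim 1$. Applying $\Pi_h$ to this identity and using commutativity yields
\begin{equation*}
\Pi_h v \;=\; \Pi_h \dd\varphi + \Pi_h z \;=\; \dd\,\Pi_h\varphi + \Pi_h z,
\end{equation*}
which is exactly the asserted matching decomposition of $\Pi_h v$. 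The displayed bound in the lemma then follows by applying the local Sch\"oberl estimate above to $\varphi$ and to $z$ separately, squaring, summing over $K\in\mathcal T_h$, and invoking the $H^1$-control coming from the regular decomposition.

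The main obstacle, which does not appear explicitly in the routine steps above, is really packaged into the construction of $\Pi_h$: one needs a projection on the rough space $H_0\Lambda^k$ that simultaneously (i) commutes with $\dd$, (ii) maps into the conforming subspace $V_h^k$ with vanishing trace, and (iii) has local $H^1$-approximation properties of the kind above. A naive convolution-based mollification violates (ii) near $\partial\Omega$; Sch\"oberl's remedy is to shrink the smoothing parameter in a shape-regular, locally finite fashion near the boundary and to verify the $\dd$-commutation at the level of the smoothing kernel itself. Once that projection is in hand, as packaged in \cite{Schoberl2007,Demlow2014}, the remaining assertion reduces to the combination with Lemma~\ref{lem:regular_decom} described above.
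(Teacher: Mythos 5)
The paper does not prove this lemma at all — it is imported as a known result with the citations \cite{Schoberl2007,Demlow2014}, so there is no in-paper argument to compare against. Your sketch (Sch\"oberl's boundary-adapted commuting smoothed quasi-interpolant with local $H^1$ and scaled trace estimates, combined with the regular decomposition of Lemma~\ref{lem:regular_decom} and the commutativity $\Pi_h\dd = \dd\,\Pi_h$ to transfer the decomposition to $\Pi_h v$) is exactly the construction carried out in those references, and the steps you outline are correct.
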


Again we will suppress the subscript $k$ and write as $\Pi_h$ if the index is clear from the context.
With a simple scaling and rewriting $\varphi - \Pi_h \varphi$ and $z - \Pi_h z$ in Lemma \ref{lem:quasi_inter} as $\varphi$ and $z$, respectively, we get the following localized Hodge decomposition of the interpolation error which generalizes a similar result for Maxwell's equations~\cite{Schoberl2007}. Note that the trace of a differential form is in general in $H^{-1/2}$ space which is not computable. To this end, we introduce the piecewise $H^1$ space
$$
H^1\Lambda^k(\mathcal T_h) := \left\{g \in L^2\Lambda^k(\Omega):\ g|_K \in H^1\Lambda^k(K)\text{ for all } K \in \mathcal T_h  \right\}.
$$

\smallskip
\begin{lemma}[Localized Hodge decomposition of Interpolation Error]\label{lem:quasi_inter_1}
For any integer $1\leq k \leq n-1$, and any $v \in H_0\Lambda^k(\Omega)$, there exist $\varphi \in H_0\Lambda^{k-1}(\Omega)\cap H^1\Lambda^{k-1}(\mathcal T_h)$ and $z \in H_0\Lambda^k(\Omega)\cap H^1\Lambda^k(\mathcal T_h)$ such that
$$
v - \Pi_h v = \dd \varphi + z,
$$
and
\begin{align}
\label{eq:iner_k_1}
\sum\limits_{K \in \mathcal T_h}\left( h_K^{-2}\|\varphi \|_K^2 + h_K^{-1}\|\tr(\varphi)\|_{\partial K}^2   \right) & \lesssim  \|v\|_{H\Lambda}^2,\\
\label{eq:inter_k_2}\sum\limits_{K \in \mathcal T_h}\left( h_K^{-2}\|z\|_K^2 + h_K^{-1}\|\tr(z)\|_{\partial K}^2   \right) & \lesssim  \|v\|_{H\Lambda}^2.
\end{align} 
\end{lemma}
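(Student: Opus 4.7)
The plan is to deduce this as an almost immediate corollary of Lemma~\ref{lem:quasi_inter}, exactly as the sentence preceding the statement already hints: apply that lemma to a normalized $\tilde v$, reinterpret the interpolation errors $\tilde \varphi - \Pi_h \tilde \varphi$ and $\tilde z - \Pi_h \tilde z$ themselves as the new $\varphi$ and $z$, and then rescale to turn the bound $\lesssim 1$ into $\lesssim \|v\|_{H\Lambda}^2$.

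Concretely, given nonzero $v \in H_0\Lambda^k(\Omega)$ (the zero case being trivial), I would set $\tilde v := v/\|v\|_{H\Lambda}$ so that $\|\tilde v\|_{H\Lambda} = 1$. Lemma~\ref{lem:quasi_inter} then supplies $\tilde \varphi \in H_0^1\Lambda^{k-1}(\Omega)$ and $\tilde z \in H_0^1\Lambda^k(\Omega)$ with $\tilde v = \dd \tilde \varphi + \tilde z$, the decomposed commuting identity $\Pi_h \tilde v = \dd \Pi_h \tilde \varphi + \Pi_h \tilde z$, and the localized bound \eqref{eq:quasi-inter}. Subtracting the two identities gives $\tilde v - \Pi_h \tilde v = \dd(\tilde \varphi - \Pi_h \tilde \varphi) + (\tilde z - \Pi_h \tilde z)$, so defining $\varphi := \|v\|_{H\Lambda}(\tilde \varphi - \Pi_h \tilde \varphi)$ and $z := \|v\|_{H\Lambda}(\tilde z - \Pi_h \tilde z)$ furnishes the required identity $v - \Pi_h v = \dd \varphi + z$, while multiplying \eqref{eq:quasi-inter} through by $\|v\|_{H\Lambda}^2$ immediately yields \eqref{eq:iner_k_1} and \eqref{eq:inter_k_2}.

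The only items deserving a brief verification are the claimed regularity and boundary conditions. Since $\tilde \varphi \in H_0^1\Lambda^{k-1}(\Omega)$ and $\Pi_h \tilde \varphi \in V_h^{k-1} \subset H_0\Lambda^{k-1}(\Omega)$ both have vanishing trace on $\partial \Omega$, we obtain $\varphi \in H_0\Lambda^{k-1}(\Omega)$; elementwise $\tilde \varphi$ is globally $H^1$ and $\Pi_h \tilde \varphi|_K$ is polynomial, so $\varphi \in H^1\Lambda^{k-1}(\mathcal T_h)$, and the identical argument covers $z$. I do not foresee any genuine obstacle here: the statement is essentially a bookkeeping recast of Lemma~\ref{lem:quasi_inter}, and its point is precisely to expose element-wise computable $L^2$ traces $\tr \varphi|_{\partial K}$ and $\tr z|_{\partial K}$ of the interpolation error, which will then feed the residual-type estimator derived later in the paper.
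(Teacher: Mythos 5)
Your proposal is correct and is exactly the argument the paper intends: the text immediately preceding the lemma says it follows from Lemma~\ref{lem:quasi_inter} "with a simple scaling and rewriting $\varphi - \Pi_h\varphi$ and $z - \Pi_h z$ as $\varphi$ and $z$," which is precisely your normalization, subtraction of the two commuting identities, and rescaling of \eqref{eq:quasi-inter}. Your added verification of the boundary conditions and piecewise $H^1$ regularity is sound and fills in the only bookkeeping the paper leaves implicit.
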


Operator $\Pi_h$ is $L^2$ stable, commuting with the exterior derivative, and has the local approximation property. It is, however, not a projection. Namely $\Pi_hv_h \neq v_h$ for $v_h\in V_h$. 
In \cite[page 65-67]{Arnold.D;RS.Falk;Winther.R2006} and \cite{Christiansen;Winther2008}, the authors constructed a $L^2$ bounded projection operator $\tilde\Pi_h$. The idea is to compose $\Pi_h$ with a right inverse from $V_h \to L^2\Lambda$. The resulting operator is still commuting with the exterior derivative. We collect the desirable properties of $\tilde\Pi_h$ below:
\begin{equation}\label{AFWprojection}
\|\tilde\Pi_h \| \lesssim 1, \quad \dd \tilde\Pi_h  = \tilde\Pi_h \dd, \quad \tilde\Pi_h|_{V_h} = {\rm id}.
\end{equation}
From that we can easily derive the error estimate
\begin{equation} \label{eq:tilde_Pi}
\|(I - \tilde\Pi_h) v \| \lesssim h^s \| v \|_{H^s},\quad v\in H^s\Lambda, s\in [0,1].
\end{equation}

\subsection{Norms of Spaces}  Let $(\sigma,u)$ and $(\sigma_h,u_h)$ be the solutions of equations \eqref{eq:mixed_formulation_con} and \eqref{eq:mixed_formulation_dis}, respectively. It is easy to see that $\sigma = \delta u$ and $\sigma_h = \delta_h u_h$. 
We define subspaces consisting of the graph of $\delta$ and $\delta_h$:
\begin{align*}
\mathcal V & = \{(\delta v,v) :\ v \in H_0\Lambda(\Omega) \text{  and }\delta v\in H_0\Lambda^{-}(\Omega) \}\subset  H_0\Lambda^{-}(\Omega) \times H_0\Lambda(\Omega) ,\\
\mathcal V_h& = \{(\delta_h v_h,v_h):\ v_h \in V_h \} \subset  V_h^{-} \times V_h.
\end{align*} 
Note that $V_h^{-} \times V_h \subset H_0\Lambda^{-}(\Omega) \times H_0\Lambda(\Omega)$, but $\mathcal V_h \not\subset \mathcal V$ since $\delta_h$ is not a conforming discretization of $\delta$.  This non-nestedness is the main difficulty to prove the convergence of the mixed method.

For any $(\tau,v) \in \mathcal V$, define
\begin{align}\label{eq:norm1}
\|(\tau,v)\| & = \left( \|\dd ^-\tau\|^2 + \|\dd v\|^2  \right)^{1/2}.
\end{align}
It is obvious that $\|(\cdot,\cdot)\|$ is not a norm on the space $H_0\Lambda^{-}(\Omega) \times H_0\Lambda(\Omega)$. But we declare that $\|(\cdot,\cdot)\|$ is a norm on the subspaces $\mathcal V$ and $\mathcal V_h$.
\begin{lemma}[Norms]
\label{lem:norm}
Equation \eqref{eq:norm1} defines norms for both the spaces $ \mathcal V$ and $\mathcal V_h$.
\end{lemma}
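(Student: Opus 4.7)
The nontrivial axiom is definiteness; non-negativity, absolute homogeneity, and the triangle inequality are immediate because $\|(\tau,v)\|$ is the Euclidean combination of the two seminorms $\|\dd^- \tau\|$ and $\|\dd v\|$. So the plan is to show that $\|(\tau,v)\|=0$ forces $(\tau,v)=(0,0)$ separately on $\mathcal V$ and $\mathcal V_h$, with essentially parallel arguments built from (discrete) Poincar\'e inequalities.

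\textbf{Continuous case.} Take $(\tau,v)\in\mathcal V$ with $\dd v=0$ and $\dd^-\tau=0$. By the definition of $\mathcal V$ one has $\tau=\delta v$ with $v\in H_0\Lambda(\Omega)\cap H^*\Lambda(\Omega)$. The first step is to note that $\tau\in H_0\Lambda^-(\Omega)\cap \delta H^*\Lambda(\Omega)=\mathcal K^-$, the $L^2$-orthogonal complement of $\ker(\dd^-)$ in $H_0\Lambda^-(\Omega)$. The very argument leading to \eqref{eq:Poincare_d} gives a Poincar\'e inequality on $\mathcal K^-$: since $\delta\tau=0$ for $\tau\in\mathcal K^-$, Lemma~\ref{lem:Poincare} collapses to $\|\tau\|\le C_{pc}\|\dd^-\tau\|$. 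This forces $\tau=\delta v=0$. Now $v$ satisfies $\dd v=0$ and $\delta v=0$, i.e.\ $v\in\mathcal H_0$, which is trivial by assumption (\textbf{A}), hence $v=0$.

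\textbf{Discrete case.} Take $(\tau_h,v_h)\in\mathcal V_h$ with $\dd v_h=0$ and $\dd^-\tau_h=0$, where $\tau_h=\delta_h v_h$. The first step mirrors the continuous argument: for any $\mu_h\in\mathcal Z_{0,h}^-$ the definition \eqref{eq:def_delta_h} of $\delta_h$ yields
$$\langle\tau_h,\mu_h\rangle=\langle \delta_h v_h,\mu_h\rangle=\langle v_h,\dd^-\mu_h\rangle=0,$$
so $\tau_h\in\mathcal K_h^-$. The discrete Poincar\'e inequality (Lemma~\ref{lem:Poincare_Gen} applied to the $-$ part of the discrete sub-complex) then gives $\|\tau_h\|\le C_p\|\dd^-\tau_h\|=0$, so $\delta_h v_h=0$. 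Finally, Theorem~\ref{th:discretePoincare} provides $\|v_h\|^2\le C_p^2(\|\dd v_h\|^2+\|\delta_h v_h\|^2)=0$, hence $v_h=0$.

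\textbf{Main obstacle.} The only real crux is identifying $\tau$ (resp.\ $\tau_h$) as an element of $\mathcal K^-$ (resp.\ $\mathcal K_h^-$) before invoking Poincar\'e; otherwise the relevant inequality on $H_0\Lambda^-$ would need control of $\|\delta\tau\|$, which is not available from $\|(\tau,v)\|$. That identification is available precisely because $\tau$ arises as the (discrete) co-derivative of an element of the next space, and because we have excluded harmonic forms through assumption (\textbf{A}).
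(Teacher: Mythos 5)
Your proposal is correct and follows essentially the same route as the paper: reduce to definiteness, observe that $\tau=\delta v$ (resp.\ $\tau_h=\delta_h v_h$) lies in $\mathcal K^-$ (resp.\ $\mathcal K_h^-$) so the Poincar\'e inequality \eqref{eq:Poincare_d} (resp.\ its discrete analogue) forces $\tau=0$, and then conclude $v=0$ from the absence of harmonic forms in the continuous case and from the discrete Poincar\'e inequality \eqref{discretePoinare} in the discrete case. Your write-up merely makes explicit the membership $\tau\in\mathcal K^-$ and $\tau_h\in\mathcal K_h^-$ that the paper leaves implicit.
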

\begin{proof}
For any $(\tau,v) \in \mathcal V$,
it suffices to verify the positivity of $\|(\tau,v)\|$. If $\|(\tau,v)\| = 0$, then $\dd \tau = 0$ and $\dd v = 0$. The fact that $\tau = \delta v$ and Poincar\'e inequality \eqref{eq:Poincare_d} imply 
$$
\|\delta v\| = \|\tau\| \lesssim \|\dd \tau\| = 0,
$$
which means $\delta v = \tau = 0$. Since there is no harmonic form, $\dd v = \delta v = 0$ implies $v = 0$.

Verification on the space $\mathcal V_h$ is similar by using discrete Poincar\'e inequality \eqref{discretePoinare}.
\end{proof}

\section{Quasi-Orthogonality}

We shall consider two conforming triangulations $\mathcal T_h$ and $\mathcal T_H$ which are nested in the sense that $\mathcal T_h$ is a refinement of $\mathcal T_H$. Therefore, the corresponding finite element spaces are nested, i.e., $V_H \subset V_h \subset H_0\Lambda(\Omega)$. Let $(\sigma,u)\in \mathcal V$ be the solution of equation \eqref{eq:mixed_formulation_con}, and $(\sigma_h,u_h) \in \mathcal V_h$ and $(\sigma_H,u_H)\in \mathcal V_H$ be the solutions of the equation \eqref{eq:mixed_formulation_dis} on the meshes $\mathcal T_h$ and $\mathcal T_H$, respectively. 
Due to the non-nestedness of $\mathcal V,\ \mathcal V_h$ and $\mathcal V_H$, $(\sigma_h,u_h)$ (or $(\sigma_H,u_H)$) is not an orthogonal projection of $(\sigma,u)$ from $\mathcal V$ to $\mathcal V_h$ (or $\mathcal V_H$). The lack of orthogonality is the main difficulty which complicates the convergence analysis of mixed finite element methods. In this section, we shall reveal some full and partial orthogonality implicitly contained in the mixed formulation.

We first explore an orthogonality of the error $\dd ^-(\sigma - \sigma_h)$.
By equations \eqref{eq:mixed_formulation_con} and \eqref{eq:mixed_formulation_dis}, and the fact that $\dd ^- V_h^{-} = \mathcal Z_{0,h} \subset V_h \subset H_0\Lambda(\Omega)$, we have the following orthogonality.
\begin{lemma}[Orthogonality of $\sigma$]
\label{lem:sigma_ortho}
Let  $(\sigma,u)\in H_0\Lambda^{-}(\Omega)\times H_0\Lambda(\Omega)$ be the solution of equation \eqref{eq:mixed_formulation_con}  and $(\sigma_h,u_h) \in V_h^{-} \times V_h$ be the solutions of equation \eqref{eq:mixed_formulation_dis} on meshes $\mathcal T_h$. It holds
\begin{equation}\label{eq:or_sigma}
\langle \dd ^-(\sigma - \sigma_h), \dd ^-\tau_h \rangle = 0\qquad \text{for all }\tau_h \in V_h^{-},
\end{equation}
and consequently
\begin{equation}\label{eq:orth_sigma}
\|\dd ^-(\sigma - \sigma_h)\|^2 = \|\dd ^-(\sigma - \sigma_H)\|^2 - \|\dd ^-(\sigma_h - \sigma_H)\|^2.
\end{equation}
\end{lemma}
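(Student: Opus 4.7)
The plan is to derive the Galerkin-type identity \eqref{eq:or_sigma} by exploiting the fact that, in both the continuous and the discrete second equations of the mixed system, we can substitute a test function of the very special form $v = \dd^-\tau_h$ with $\tau_h \in V_h^-$. Crucially, $\dd^- V_h^- = \mathcal Z_{0,h} \subset V_h \subset H_0\Lambda(\Omega)$, so this choice is admissible on both levels simultaneously. Once we substitute, the terms $\langle \dd u, \dd(\dd^-\tau_h)\rangle$ and $\langle \dd u_h, \dd(\dd^-\tau_h)\rangle$ collapse because $\dd \circ \dd^- = 0$, leaving only
\[
\langle \dd^- \sigma, \dd^- \tau_h\rangle = \langle f, \dd^- \tau_h\rangle \quad\text{and}\quad \langle \dd^- \sigma_h, \dd^- \tau_h\rangle = \langle f, \dd^- \tau_h\rangle.
\]
Subtracting these two identities yields \eqref{eq:or_sigma} directly. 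This is the entire argument for the first half of the lemma; there is no serious obstacle here, the whole point is the cancellation $\dd\dd^- = 0$ combined with the inclusion $\dd^- V_h^- \subset V_h$.

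For the Pythagoras-type identity \eqref{eq:orth_sigma}, I would use the nestedness hypothesis $\mathcal T_H \subset \mathcal T_h$, which implies the conforming inclusion $V_H^- \subset V_h^-$. Consequently $\sigma_h - \sigma_H \in V_h^-$ and is therefore a legal test function in \eqref{eq:or_sigma}. Taking $\tau_h = \sigma_h - \sigma_H$ gives
\[
\langle \dd^-(\sigma - \sigma_h), \dd^-(\sigma_h - \sigma_H)\rangle = 0.
\]
Then I simply expand
\[
\|\dd^-(\sigma - \sigma_H)\|^2 = \|\dd^-(\sigma - \sigma_h) + \dd^-(\sigma_h - \sigma_H)\|^2
\]
and use the orthogonality above to drop the cross term, obtaining \eqref{eq:orth_sigma}.

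The main thing worth pausing on is not any computation but the conceptual point of why this orthogonality exists at all for the mixed formulation: it is not a standard Galerkin orthogonality in the product space $\mathcal V \times \mathcal V_h$ (indeed $\mathcal V_h \not\subset \mathcal V$, as emphasized in the paper), but a \emph{partial} orthogonality that survives precisely in the direction $\dd^- V_h^-$. That direction is special because $V_h$ is rich enough to contain the image $\dd^- V_h^-$, and the annihilation $\dd\dd^-=0$ removes the coupling with $u$ and $u_h$. I would state this observation explicitly so that the reader sees why the argument is clean despite the non-nestedness of $\mathcal V_h$ and $\mathcal V$, and then present the two-line derivation above.
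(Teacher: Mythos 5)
Your argument is correct and is essentially identical to the paper's own proof: testing both second equations with $v = \dd^-\tau_h$ (admissible since $\dd^- V_h^- \subset V_h$), using $\dd\circ\dd^- = 0$ to kill the $u$-terms, and subtracting; the Pythagoras identity then follows by taking $\tau_h = \sigma_h - \sigma_H \in V_h^-$ and expanding the square, a step the paper leaves implicit but which you carry out correctly.
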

\begin{proof}
For any $\tau_h \in V_h^{-}$, take $v = \dd ^-\tau_h$ in the second equation of \eqref{eq:mixed_formulation_con}, and $v_h = \dd ^-\tau_h$ in the second equation of \eqref{eq:mixed_formulation_dis}. Then we get
\begin{align*}
\langle \dd ^-\sigma,\dd ^-\tau_h \rangle  = \langle f,\dd ^-\tau_h \rangle \quad \text{  and  }\quad
\langle \dd ^-\sigma_h,\dd ^-\tau_h \rangle = \langle f,\dd ^-\tau_h \rangle.
\end{align*}
Thus \eqref{eq:or_sigma} follows by subtraction, and \eqref{eq:orth_sigma} is a consequence of \eqref{eq:or_sigma}. 
\end{proof}

We then use a relation between $\mathcal K_h$ and $\mathcal K$ to obtain a quasi-orthogonality of the error $u - u_h$. 
We define $Q_{\mathcal K}: \mathcal K_h \mapsto \mathcal K$ as the $L^2$ projection, i.e., for a $v_h \in \mathcal K_h,  Q_{\mathcal K} v_h\in \mathcal K$ satisfying  $\langle Q_{\mathcal K} v_h, \phi \rangle = \langle v_h, \phi \rangle, \forall  \phi \in \mathcal K.$ Note that $Q_{\mathcal K} v_h = v_h - \dd ^-\psi$ where $\psi \in \mathcal K^{-}$ is determined uniquely by equation $\langle \dd ^- \psi, \dd ^-\varphi \rangle = \langle v_h, \dd ^-\varphi \rangle$ for all $\varphi \in \mathcal K^{-}$. Therefore $\dd ^- Q_{\mathcal K} v_h = \dd ^- v_h$. Furthermore, we have the following error estimate, generalizing results for $H(\curl)$ and $H(\div)$ spaces developed in~\cite{Arnold.D;RS.Falk;Winther.R2000,Hiptmair2002,P.Monk2003}.
\begin{lemma} [Approximation Property of $Q_{\mathcal K}$]\label{lem:app_Q}
There exists a constant $C_A > 0$ such that
$$
\|v_h - Q_{\mathcal K} v_h\|  \leq C_A h^s \|\dd v_h\|\qquad \text{for all } v_h \in \mathcal K_h,
$$ 
where $1/2\leq s\leq 1$ is the regularity index in Lemma \ref{lem:regularity}.
\end{lemma}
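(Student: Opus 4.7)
The plan is to use the commuting $L^2$-bounded projection $\tilde\Pi_h$ from \eqref{AFWprojection}--\eqref{eq:tilde_Pi} together with the regularity Lemma~\ref{lem:regularity} applied to $\phi := Q_{\mathcal K}v_h$, exploiting that $v_h\in\mathcal K_h$ is $L^2$-orthogonal to $\mathcal Z_{0,h}$ while $\phi\in\mathcal K$ is $L^2$-orthogonal to the larger space $\mathcal Z_0\supset\mathcal Z_{0,h}$.

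First I would establish regularity of $\phi$. By the characterization $\mathcal K = H_0\Lambda\cap\delta H^*\Lambda^+$, one has $\phi = \delta\eta$ for some $\eta$, hence $\delta\phi=0$ and $\phi\in H_0\Lambda\cap H^*\Lambda$. Since $v_h-\phi=\dd^-\psi\in\mathcal Z_0$ is closed, $\dd\phi=\dd v_h$. Combining the bound $\|\phi\|\le\|v_h\|\le C_p\|\dd v_h\|$ (from Lemma~\ref{lem:Poincare_Gen}, since $v_h\in\mathcal K_h$) with Lemma~\ref{lem:regularity} gives
\begin{equation*}
\|\phi\|_{H^s}\lesssim \|\phi\|+\|\dd\phi\|+\|\delta\phi\|\lesssim \|\dd v_h\|.
\end{equation*}

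Next I would bring the projection $\tilde\Pi_h$ into play. Because $\dd\tilde\Pi_h=\tilde\Pi_h\dd$ and $\tilde\Pi_h v_h=v_h$, we have $\dd\tilde\Pi_h\phi=\tilde\Pi_h\dd\phi=\tilde\Pi_h\dd v_h=\dd v_h$, so the difference $w_h := v_h-\tilde\Pi_h\phi$ lies in $\mathcal Z_{0,h}$. The key computation now exploits a double orthogonality: $v_h\in\mathcal K_h$ gives $\langle v_h,w_h\rangle=0$, and $\phi\in\mathcal K\subset\mathcal Z_0^{\perp_{L^2}}$ with $w_h\in\mathcal Z_{0,h}\subset\mathcal Z_0$ gives $\langle\phi,w_h\rangle=0$. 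Therefore
\begin{equation*}
\|w_h\|^2 = \langle v_h-\tilde\Pi_h\phi,w_h\rangle = \langle\phi-\tilde\Pi_h\phi,w_h\rangle\le\|\phi-\tilde\Pi_h\phi\|\,\|w_h\|,
\end{equation*}
yielding $\|w_h\|\le\|\phi-\tilde\Pi_h\phi\|$.

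Finally a triangle inequality and the error estimate \eqref{eq:tilde_Pi} close the argument:
\begin{equation*}
\|v_h-\phi\|\le\|w_h\|+\|\tilde\Pi_h\phi-\phi\|\le 2\|(I-\tilde\Pi_h)\phi\|\lesssim h^s\|\phi\|_{H^s}\lesssim h^s\|\dd v_h\|.
\end{equation*}
The main subtle point is recognizing that $\mathcal K$-membership furnishes the needed intersection regularity ($\delta\phi=0$ together with $\tr\phi=0$) so that Lemma~\ref{lem:regularity} applies with control by $\|\dd v_h\|$ alone; once this is in place, the $\mathcal K_h$-versus-$\mathcal K$ double orthogonality together with the commuting property of $\tilde\Pi_h$ makes the rest mechanical.
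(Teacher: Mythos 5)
Your proof is correct, and it shares the two essential ingredients with the paper's proof: the commuting $L^2$-bounded projection $\tilde\Pi_h$ of \eqref{AFWprojection}--\eqref{eq:tilde_Pi}, and the observation that $\phi=Q_{\mathcal K}v_h\in\mathcal K\subset H_0\Lambda\cap H^*\Lambda$ with $\delta\phi=0$, $\dd\phi=\dd v_h$, so that Lemma~\ref{lem:regularity} and the Poincar\'e inequality \eqref{eq:Poincare_d} give $\|\phi\|_{H^s}\lesssim\|\dd v_h\|$. Where you differ is in how the key cancellation is obtained. The paper announces ``the standard duality argument'': it introduces an auxiliary discrete problem $\langle\dd\omega_h,\dd\phi_h\rangle=\langle v_h-\phi,\phi_h\rangle$ on $\mathcal K_h$, enlarges the test space to $V_h$, and uses $\dd\tilde\Pi_h\phi=\tilde\Pi_h\dd v_h=\dd v_h$ to kill the term $\langle v_h-\phi,\,v_h-\tilde\Pi_h\phi\rangle$, arriving at $\|v_h-\phi\|\le\|(I-\tilde\Pi_h)\phi\|$. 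You instead note directly that $w_h=v_h-\tilde\Pi_h\phi\in\mathcal Z_{0,h}$ and that both $v_h$ (being in $\mathcal K_h$) and $\phi$ (being in $\mathcal K\perp\mathcal Z_0\supset\mathcal Z_{0,h}$) are $L^2$-orthogonal to $\mathcal Z_{0,h}$ --- which is exactly the identity $\langle v_h-\phi,\,v_h-\tilde\Pi_h\phi\rangle=0$ obtained without any auxiliary problem. Your route is therefore a genuine simplification: it exposes the fact that no duality is actually needed, at the harmless cost of a factor $2$ from the triangle inequality (the paper gets $\|v_h-\phi\|\le\|(I-\tilde\Pi_h)\phi\|$ directly by dividing through in $\|v_h-\phi\|^2=\langle v_h-\phi,\tilde\Pi_h\phi-\phi\rangle$). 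One cosmetic point: the equality $\tilde\Pi_h\dd v_h=\dd v_h$ relies on $\tilde\Pi_h^+|_{V_h^+}={\rm id}$ applied to $\dd v_h\in V_h^+$, not on $\tilde\Pi_h v_h=v_h$ as you wrote; this is covered by \eqref{AFWprojection} but should be cited at the right degree.
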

\begin{proof}
We shall prove the result by the standard duality argument. 
Let $v = Q_{\mathcal K}v_h$ and $\omega_h \in \mathcal K_h$ be the solution of the equation 
\begin{equation}\label{eq:aux_problem}
\langle \dd \omega_h, \dd \phi_h \rangle = \langle v_h -  v, \phi_h \rangle\qquad \text{for all }\phi_h \in \mathcal K_h.
\end{equation}
Equation \eqref{eq:aux_problem} is well-posed due to the fact that $\langle \dd(\cdot), \dd(\cdot) \rangle$ is an inner-product on the space $\mathcal K_h$, c.f. discrete Poincar\'e inequality \eqref{eq:Poincare_K_h}.
Since both $\mathcal K$ and $\mathcal K_h$ are $L^2$ orthogonal to $\mathcal Z_{0,h}$, the test function space in equation \eqref{eq:aux_problem} can be enlarged, i.e., the solution $\omega_h$ of \eqref{eq:aux_problem} satisfies
$$
\langle \dd \omega_h, \dd \phi_h \rangle = \langle v_h -  v, \phi_h \rangle\qquad \text{for all }\phi_h \in  V_h.
$$ 
We shall use the $L^2$ bounded projection operator $\tilde\Pi_h$ constructed in \cite[page 65-67]{Arnold.D;RS.Falk;Winther.R2006} and \cite{Christiansen;Winther2008}, c.f. \eqref{AFWprojection}. 
Then it holds
\begin{align*}
\|v_h -  v\|^2 & = \langle v_h - v, v_h - \tilde \Pi_h  v + \tilde\Pi_h  v - v \rangle \\
& = \langle  \dd \omega_h, \dd v_h - \dd \tilde\Pi_h  v \rangle + \langle v_h - v, \tilde\Pi_h  v -  v \rangle \\
& =  \langle  \dd \omega_h, \dd v_h - \tilde\Pi_h \dd v_h \rangle + \langle v_h - v, \tilde\Pi_h v -  v \rangle \\
& =  \langle v_h -  v, \tilde\Pi_h  v -  v \rangle \\
& \leq \|v_h -  v\| \|(I - \tilde\Pi_h) v \| \\
& \lesssim h^s \|v_h -  v\| \| v \|_{H^s}.
\end{align*}
By the regularity result, c.f., Lemma \ref{lem:regularity}, Poincar\'e inequality \eqref{eq:Poincare_d}, and the fact that $\delta  v = 0$, it holds
$$
\| v \|_{H^s} \lesssim \| v \| + \|\dd  v \| \lesssim \| \dd v\| = \|\dd v_h\|,
$$
which completes the proof.
\end{proof}

\begin{lemma}[Quasi-Orthogonality of $u$]\label{lem:u_ortho}
Let  $(\sigma,u)\in H_0\Lambda^{-}(\Omega)\times H_0\Lambda(\Omega)$ be the solution of equation \eqref{eq:mixed_formulation_con}  and $(\sigma_h,u_h) \in V_h^{-} \times V_h$ and $(\sigma_H,u_H)\in V_H^{-}\times V_H$ be the solution of equation \eqref{eq:mixed_formulation_dis} on meshes $\mathcal T_h$ and $\mathcal T_H$, respectively. 
Then we have the quasi-orthogonality 
\begin{equation}\label{eq:orth_u}
\|\dd (u - u_h)\|^2 \leq \|\dd (u- u_H)\|^2 - \|\dd (u_h - u_H)\|^2 + 2C_Ah^s\|\dd ^-(\sigma - \sigma_h)\|\|\dd (u_h - u_H)\|,
\end{equation}
where $s$ is the regularity index in Lemma \ref{lem:regularity} and $C_A$ is the constant in Lemma \ref{lem:app_Q}.
\end{lemma}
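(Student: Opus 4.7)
The plan is to start from the Pythagorean-type identity
\begin{equation*}
\|\dd (u-u_h)\|^2 = \|\dd (u-u_H)\|^2 - \|\dd (u_h-u_H)\|^2 - 2\langle \dd (u-u_h),\dd (u_h-u_H)\rangle,
\end{equation*}
which follows by expanding $\|\dd (u-u_H)\|^2 = \|\dd(u-u_h)+\dd(u_h-u_H)\|^2$. Thus it suffices to prove
\begin{equation*}
|\langle \dd (u-u_h),\dd (u_h-u_H)\rangle| \leq C_A h^s \|\dd ^-(\sigma-\sigma_h)\|\|\dd (u_h-u_H)\|,
\end{equation*}
after which the claim is immediate.

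To bound this cross term, I would use Galerkin-type subtraction. Since $V_H\subset V_h\subset H_0\Lambda(\Omega)$, the test function $u_h-u_H$ is admissible in both the second equation of \eqref{eq:mixed_formulation_con} and the second equation of \eqref{eq:mixed_formulation_dis} posed on $\mathcal T_h$. Subtracting these two identities produces the error equation
\begin{equation*}
\langle \dd (u-u_h),\dd (u_h-u_H)\rangle = -\langle \dd ^-(\sigma-\sigma_h),u_h-u_H\rangle,
\end{equation*}
so the task becomes estimating the right-hand side. If $V_h$ were a conforming subspace of $H^*\Lambda$, the orthogonality $\dd \dd ^-=0$ would give us something clean; the subtle point is precisely that $\dd ^-(\sigma-\sigma_h)$ only lies in $\mathcal Z_0$, not in a discrete co-closed space.

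The main step is therefore to exploit the discrete Hodge decomposition \eqref{eq:Hodge_decom_dis} on $V_h$, writing
\begin{equation*}
u_h-u_H = z_h \oplus^{\bot_{L^2}} w_h, \qquad z_h\in \mathcal Z_{0,h},\ w_h\in \mathcal K_h,
\end{equation*}
with $\dd w_h = \dd (u_h-u_H)$. Lemma \ref{lem:sigma_ortho} states that $\dd ^-(\sigma-\sigma_h)$ is $L^2$-orthogonal to $\dd ^- V_h^{-} = \mathcal Z_{0,h}$, hence the $z_h$ part drops out and
\begin{equation*}
\langle \dd ^-(\sigma-\sigma_h),u_h-u_H\rangle = \langle \dd ^-(\sigma-\sigma_h),w_h\rangle.
\end{equation*}
Now $Q_{\mathcal K}w_h\in \mathcal K\subset H_0\Lambda \cap H^*\Lambda$ is $L^2$-orthogonal to $\dd ^-(\sigma-\sigma_h)\in \mathcal Z_0 = \mathcal B_0$ (using assumption (\textbf{A})), so inserting $Q_{\mathcal K}w_h$ for free gives
\begin{equation*}
\langle \dd ^-(\sigma-\sigma_h),w_h\rangle = \langle \dd ^-(\sigma-\sigma_h),w_h-Q_{\mathcal K}w_h\rangle.
\end{equation*}
Cauchy--Schwarz and the approximation property from Lemma \ref{lem:app_Q} then yield
\begin{equation*}
|\langle \dd ^-(\sigma-\sigma_h),w_h\rangle| \leq \|\dd ^-(\sigma-\sigma_h)\| \cdot C_A h^s \|\dd w_h\| = C_A h^s \|\dd ^-(\sigma-\sigma_h)\|\|\dd (u_h-u_H)\|,
\end{equation*}
which finishes the estimate. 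The hard part is recognizing that the non-conformity of the discrete co-derivative is exactly quantified by $\|w_h-Q_{\mathcal K}w_h\|$ and that Lemma \ref{lem:app_Q} converts this into the $h^s$-factor responsible for the ``quasi'' in quasi-orthogonality.
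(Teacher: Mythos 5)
Your proposal is correct and follows essentially the same route as the paper: the Pythagorean expansion, the Galerkin error equation converting the cross term into $-\langle \dd^-(\sigma-\sigma_h), u_h-u_H\rangle$, the discrete Hodge decomposition of $u_h-u_H$ combined with the orthogonality of Lemma \ref{lem:sigma_ortho} to reduce to the $\mathcal K_h$ component, and finally inserting $Q_{\mathcal K}$ and invoking Lemma \ref{lem:app_Q}. No gaps.
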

\begin{proof}
By the second equations of \eqref{eq:mixed_formulation_con} and \eqref{eq:mixed_formulation_dis}, we have
$$
\langle \dd ^-(\sigma - \sigma_h), v_h\rangle + \langle \dd (u - u_h), \dd v_h\rangle = 0\qquad \text{for all } v_h \in V_h.
$$
Then
\begin{align*}
\|\dd (u - u_H)\|^2 & =  \|\dd (u - u_h)\|^2 + \|\dd (u_h - u_H)\|^2 
+ 2 \langle \dd (u - u_h), \dd (u_h - u_H) \rangle \\
& =  \|\dd (u - u_h)\|^2 + \|\dd (u_h - u_H)\|^2 
- 2 \langle \dd ^-(\sigma - \sigma_h), u_h -u_H\rangle.
\end{align*}
The discrete Hodge decomposition implies that
$$
u_h - u_H = \dd ^- \phi_h \oplus^{\bot_{L^2}} \psi_h\qquad \text{ for some } \phi_h \in V_h^{-},\ \psi_h \in \mathcal K_h.
$$
Recall that 
$$
\langle \dd ^-(\sigma - \sigma_h), \dd ^- \zeta_h \rangle = 0\qquad \text{for all }\zeta_h \in V_h^{-}.
$$
Then, by the fact that $\dd ^-(\sigma - \sigma_h) \in \mathcal Z_0$ and $ \mathcal K$ is the $L^2$ orthogonal complement of $\mathcal Z_0$, we have
\begin{align*}
 \langle \dd ^-(\sigma - \sigma_h), u_h -u_H\rangle & =  \langle \dd ^-(\sigma - \sigma_h), \psi_h \rangle 
 =  \langle \dd ^-(\sigma - \sigma_h), \psi_h - Q_{\mathcal K}\psi_h \rangle \\
& \leq  C_A h^s \|\dd ^-(\sigma - \sigma_h)\| \|\dd \psi_h\| \\
&  =   C_A h^s \|\dd ^-(\sigma - \sigma_h)\| \|\dd (u_h - u_H)\|.
\end{align*}
The desired result then follows.
\end{proof}

\section{A Posteriori Error Estimates}

In this section reliability and efficiency of \emph{a posteriori} error estimators will be presented. The Hodge decomposition and the regular decomposition plays an important role in the analysis. 

For any interior face $e \in \mathcal E_h$, let $K^+$ and $K^-$ be two elements in $\mathcal T_h$ sharing $e$. For any quality $\zeta$ satisfying $\zeta|_{K^-} \in H^1\Lambda(K^-)$ and $\zeta|_{K^+} \in H^1\Lambda(K^+)$, let $\zeta^+ = \zeta |_{K^+}$ and $\zeta^- = \zeta |_{K^-}$, then we define $[\zeta]|_e = \zeta^+|_e - \zeta^-|_e$. In the case where $e\subset \partial\Omega$, $[\zeta]$ is simply interpreted as $\zeta|_e$.

\subsection{Reliability} 
In this subsection, the reliability of \emph{a posteriori} error estimators will be proved by using the Hodge decomposition, the regular decomposition, and the commuting quasi-interpolants introduced in Section 2. It should be pointed out that similar estimators have been obtained in~\cite{Demlow2014} but ours are tighter in the sense that $L^2$-norm of $\sigma$ and $u$ are absent.

In this section and following sections, we will always assume $(\sigma,u) \in H_0\Lambda^{-}(\Omega) \times H_0\Lambda(\Omega)$ is the solution of equation \eqref{eq:mixed_formulation_con} and $(\sigma_h,u_h) \in V_h^{-} \times V_h$ the solution of equation \eqref{eq:mixed_formulation_dis}. 
We also assume that $f \in H^1\Lambda^k(\mathcal T_h).$
For any subset $\mathcal M_h \subset \mathcal T_h$, define $\mathcal F_h = \left\{e\subset \partial K:\ \text{for all }K \in \mathcal M_h \right\}$, which denotes faces of elements in $\mathcal M_h$. 
The following error estimators $\eta$ on $\mathcal M_h$ are defined as
\begin{align*}
\eta^2((\sigma_h,u_h),\mathcal M_h) & = \sum\limits_{K \in \mathcal M_h} h_K^2\Big(\|\delta_K(f - \dd ^-\sigma_h)\|_K^2 + \| f - \delta_K \dd u_h - \dd ^-\sigma_h\|_K^2 \Big)\\ 
& + \sum\limits_{e\in\mathcal F_h} h_e\Big(\|[\tr\star(f -  \dd ^- \sigma_h)]\|_e^2 
+ \|[\tr\star \dd u_h]\|_e^2 \Big), \\
\eta^2(\sigma_h,\mathcal M_h) & = \sum\limits_{K\in \mathcal M_h} h_K^2\|\delta_K(f - \dd ^-\sigma_h)\|_K^2 + \sum\limits_{e \in \mathcal F_h}h_e\|[\tr \star (f - \dd ^-\sigma_h)]\|_e^2,
\end{align*}
where $\delta_K$ is the element-wise co-derivative operator defined on $K \in \mathcal T_h$, i.e. $\delta_K$ and $\dd$ satisfying
$$
\langle \dd \omega, \mu \rangle_K = \langle w , \delta_K \mu \rangle_K + \int_{\partial K} \tr \omega \wedge \tr \star \mu, \quad \omega \in \Lambda^{k-1}(K), \ \mu \in \Lambda^k(K).
$$ 
Note that terms in our estimator are only part of that in~\cite{Demlow2014}.

The terms appearing in $\eta$ are reasonable. The Hodge Laplacian equation $\dd ^-\sigma + \delta \dd u = f$ implies that $\delta \dd ^-\sigma = \delta f$. 
The terms $h_K\|\delta_K(f - \dd ^-\sigma_h)\|_K$ and $h_K\|f - \dd ^-\sigma_h - \delta_K \dd u_h \|_K $ measure the elementwise residual in a scaled $L^2$ norm. The terms $h_e^{1/2}\|[\tr \star (f - \dd ^-\sigma_h)]\|_e$ and $h_e^{1/2}\|[\tr\star \dd u_h]\|_e$ measure the inter-element residual with a correct scaling. Note that $(\tr \star f) |_e$ is meaningful as we assume $f \in H^1\Lambda^k(\mathcal T_h)$.

We first give an estimate of $\|\dd ^-(\sigma - \sigma_h)\|$ based on the orthogonality \eqref{eq:or_sigma}.
\begin{lemma}[\emph{A Posteriori} Error Estimates of $\sigma$]\label{lem:sigma}
For any integer $1 \leq k \leq n-1$ and $f \in H^1\Lambda^k(\mathcal T_h)$, there exists a positive constant $C_1 > 0$, such that
$$
\|\dd ^-(\sigma - \sigma_h)\| \leq C_1 \eta(\sigma_h, \mcal T_h).
$$
\end{lemma}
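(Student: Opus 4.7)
The plan is to exploit the orthogonality in Lemma \ref{lem:sigma_ortho} and reduce the error to a computable residual through a commuting quasi-interpolation argument. Set $w := \dd^-(\sigma - \sigma_h)$. Since $\dd \circ \dd^- = 0$, we have $\dd w = 0$, so $w \in \mathcal Z_0$; under assumption (A) we may write $w = \dd^-\xi$ for some $\xi \in \mathcal K^-$ with $\|\xi\|_{H\Lambda} \lesssim \|w\|$ by the Poincar\'e inequality \eqref{eq:Poincare_d}. Testing the continuous equation \eqref{eq:mixed_formulation_con} with $v = w$ and using $\dd w = 0$ gives $\langle \dd^-\sigma, w\rangle = \langle f, w\rangle$. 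On the discrete side, commutativity of $\Pi_h$ yields $\Pi_h w = \dd^-\Pi_h\xi \in V_h$ with $\dd\Pi_h w = 0$, so testing \eqref{eq:mixed_formulation_dis} with $v_h = \Pi_h w$ produces $\langle \dd^-\sigma_h, \Pi_h w\rangle = \langle f, \Pi_h w\rangle$. Subtracting gives the key representation
\begin{equation*}
\|w\|^2 = \langle \dd^-(\sigma - \sigma_h), w\rangle = \langle f - \dd^-\sigma_h,\, w - \Pi_h w\rangle.
\end{equation*}

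To bound the right-hand side I would apply Lemma \ref{lem:quasi_inter} to $\xi$ and obtain a regular decomposition $\xi = \dd\varphi + z$ with $\Pi_h\xi = \dd\Pi_h\varphi + \Pi_h z$ satisfying the weighted estimates \eqref{eq:quasi-inter}. Because $\dd^-\dd = 0$, the interpolation error collapses to
\begin{equation*}
w - \Pi_h w = \dd^-(\xi - \Pi_h\xi) = \dd^-(z - \Pi_h z).
\end{equation*}
Element-wise integration by parts via Stokes' theorem then transfers $\dd^-$ onto $f - \dd^-\sigma_h$ as the element-wise co-derivative $\delta_K$:
\begin{align*}
\langle f - \dd^-\sigma_h,\, \dd^-(z - \Pi_h z)\rangle &= \sum_{K \in \mathcal T_h} \langle \delta_K(f - \dd^-\sigma_h),\, z - \Pi_h z\rangle_K \\
&\quad + \sum_{e \in \mathcal E_h} \int_e \tr(z - \Pi_h z)\wedge [\tr\star(f - \dd^-\sigma_h)],
\end{align*}
where boundary faces contribute nothing because $\tr z = \tr\Pi_h z = 0$ on $\partial\Omega$, and interior faces yield the jumps of $\tr\star(f - \dd^-\sigma_h)$ since $\tr(z - \Pi_h z)$ is single-valued across $H_0\Lambda^-$-conforming elements. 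Applying Cauchy-Schwarz with the natural $h_K$- and $h_e$-weights, together with the interpolation bound
\begin{equation*}
\sum_{K \in \mathcal T_h}\!\left(h_K^{-2}\|z - \Pi_h z\|_K^2 + h_K^{-1}\|\tr(z - \Pi_h z)\|_{\partial K}^2\right) \lesssim \|\xi\|_{H\Lambda}^2 \lesssim \|w\|^2
\end{equation*}
from \eqref{eq:quasi-inter} and Poincar\'e, gives $\|w\|^2 \lesssim \eta(\sigma_h, \mathcal T_h)\|w\|$, and dividing by $\|w\|$ completes the proof.

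The main technical obstacle is the bookkeeping in the element-wise integration by parts: one has to verify carefully that (i) boundary traces vanish because of the homogeneous Dirichlet condition on $z$ and $\Pi_h z$, and (ii) interior face contributions assemble into precisely the jump terms appearing in $\eta(\sigma_h, \mathcal T_h)$, which requires $z - \Pi_h z$ to be conforming in $H_0\Lambda^-$ so that its trace is single-valued. A minor edge case is $k = 1$, where $\xi$ is a scalar $H_0^1$ function and Lemma \ref{lem:quasi_inter} degenerates to the trivial choice $\varphi = 0$, $z = \xi$; the same argument still closes using a standard Scott--Zhang--type estimate in place of \eqref{eq:quasi-inter}.
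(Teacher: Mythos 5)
Your proposal is correct and follows essentially the same route as the paper: both exploit the orthogonality of $\sigma$, represent $\dd^-(\sigma-\sigma_h)$ via its $\mathcal K^-$-preimage (your $\xi$ is the paper's $\psi$ from the Hodge decomposition of $\sigma-\sigma_h$), apply the commuting quasi-interpolant with the localized regular decomposition so that only the $z$-component survives under $\dd^-$, and conclude by elementwise integration by parts against the weighted interpolation estimates. The treatment of the degenerate case $k=1$ as a standard Poisson-type estimate also matches the paper.
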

\begin{proof}

For $k = 1$, $H_0\Lambda^0 = H_0^1(\Omega) $, $\dd^0 = \grad$ and $\delta^1 = -\div$. 
It is reduced to \emph{a posteriori} error estimates for the Lagrange element approximation to the standard Poisson equation, cf.~\cite{Verfurth1996,Anisworth;Olden2011}.

For any $\zeta \in H_0\Lambda^{k}$ ($1 \leq k \leq n-2$), by Lemma \ref{lem:Hodge_Regular_dec} and \ref{lem:quasi_inter_1} there exist $\phi \in H_0\Lambda^{k-1}$, $\psi \in H_0\Lambda^{k}$, $\varphi \in H_0\Lambda^{k-1}(\Omega)\cap H^1\Lambda^{k-1}(\mathcal T_h)$, $\omega\in H_0\Lambda^{k}(\Omega)\cap H^1\Lambda^{k}(\mathcal T_h)$ such that 
$$
\zeta = \dd\phi \oplus^{\bot_{L^2}}  \psi,\quad \psi - \Pi_h\psi = \dd\varphi + \omega\quad\text{and} \quad \|\psi\|_{H\Lambda} \lesssim \|\dd ^- \zeta \|,
$$
where recall that $\Pi_h$ is the quasi-interpolation operator in Lemma \ref{lem:quasi_inter}. Therefore, it holds $\dd^-\zeta = \dd^-\psi$. Since for any $K \in \mathcal T_h$, $\omega|_K \in H^1\Lambda^-(K)$, the $\tr(\omega)|_{\partial K}\in L^2(\partial K)$. Using the orthogonality result \eqref{eq:or_sigma}, we have
\begin{align*}
&\langle \dd ^-(\sigma - \sigma_h), \dd ^-\zeta \rangle  =  \langle \dd ^-(\sigma -\sigma_h) , \dd ^-(\psi - \Pi_h\psi)\rangle 
 =  \langle \dd ^-(\sigma -\sigma_h) , \dd ^-\omega\rangle\\
 = & \langle f,\dd^-\omega\rangle -  \langle \dd^-\sigma_h,\dd^-\omega\rangle\\
 = & \sum\limits_{K\in \mathcal T_h} \langle \delta_K(f - \dd^-\sigma_h),\omega\rangle_K + \int_{\partial K}\tr\omega\wedge\tr\star (f - \dd^-\sigma_h) \\
 \leq & \sum\limits_{K \in\mathcal T_h} \|\delta_K(f - \dd ^-\sigma_h)\|_K \|\omega \|_K + \sum\limits_{e \in \mathcal E_h}  \|[\tr\star (f - \dd ^-\sigma_h)]\|_e \|\tr(\omega )\|_e \\
 \lesssim & \left( \sum\limits_{K \in \mathcal T_h} h_K^2\|\delta_K(f - \dd ^-\sigma_h)\|_K^2 + \sum\limits_{e\in\mathcal E_h} h_e\|[\tr\star (f - \dd ^- \sigma_h)]\|_e^2 \right)^{1/2}\|\psi\|_{H\Lambda} \\ 
 \lesssim & \left( \sum\limits_{K \in \mathcal T_h} h_K^2\|\delta_K(f - \dd ^-\sigma_h)\|_K^2 + \sum\limits_{e\in\mathcal E_h} h_e\|[\tr\star (f - \dd ^-\sigma_h)]\|_e^2 \right)^{1/2}\|\dd ^-\zeta\|.
\end{align*}
The desired result then follows by choosing $\zeta = \sigma - \sigma_h$.
\end{proof}

Now we turn to the estimates of the term $\|\dd (u - u_h)\|$. 
\begin{lemma}[\emph{A Posteriori} Error Estimates of $u$]
\label{lem:u}
For an integer $1\leq k \leq n-1$ and $f \in H^1\Lambda^k(\mathcal T_h)$, it holds with a positive constant $C_2$
\begin{equation*}
\|\dd (u - u_h)\|  \leq C_2  \eta((\sigma_h,u_h),\mathcal T_h).
\end{equation*}
\end{lemma}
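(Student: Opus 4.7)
The plan is to estimate $\|\dd(u-u_h)\|^2$ by pairing $\dd(u-u_h)$ with itself, using the Hodge decomposition to reduce to the $\mathcal K$-component (where the $\sigma-\sigma_h$ error becomes automatically orthogonal), and then unraveling the residual structure through the commuting quasi-interpolant together with element-wise integration by parts, mirroring the strategy used in Lemma \ref{lem:sigma} but now on the $u$ side.

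First I would apply the Hodge decomposition \eqref{eq:Hodge_decom} to $u-u_h \in H_0\Lambda(\Omega)$, writing $u-u_h = \dd^-\rho \oplus^{\bot_{L^2}} \omega$ with $\omega \in \mathcal K$. Since $\dd\circ\dd^- = 0$, we get $\dd(u-u_h) = \dd\omega$, hence $\|\dd(u-u_h)\|^2 = \langle \dd(u-u_h), \dd\omega\rangle$. Subtracting the discrete equation from the continuous one, for any $v\in H_0\Lambda(\Omega)$,
\[
\langle \dd(u-u_h), \dd v\rangle \;=\; \langle f-\dd^-\sigma_h, v\rangle - \langle \dd u_h, \dd v\rangle - \langle \dd^-(\sigma-\sigma_h), v\rangle.
\]
Choosing $v=\omega$, the key observation is that $\dd^-(\sigma-\sigma_h) \in \mathcal B_0 = \mathcal Z_0$ (using assumption $({\bf A})$) while $\omega\in\mathcal K$ is $L^2$-orthogonal to $\mathcal Z_0$, so the last term vanishes and
\[
\|\dd(u-u_h)\|^2 \;=\; \langle f-\dd^-\sigma_h, \omega\rangle - \langle \dd u_h, \dd\omega\rangle.
\]

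Next I would subtract the discrete equation \eqref{eq:mixed_formulation_dis} tested against $\Pi_h \omega\in V_h$ (the commuting quasi-interpolant of Lemma \ref{lem:quasi_inter}) to replace $\omega$ by $\omega-\Pi_h\omega$. Invoking Lemma \ref{lem:quasi_inter_1}, decompose $\omega - \Pi_h\omega = \dd\varphi + z$ with $\varphi\in H_0\Lambda^{-}(\Omega)\cap H^1\Lambda^{-}(\mathcal T_h)$ and $z\in H_0\Lambda(\Omega)\cap H^1\Lambda(\mathcal T_h)$; since $\dd\dd = 0$ we have $\dd(\omega-\Pi_h\omega) = \dd z$. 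Then break up the right-hand side element-wise and apply the integration-by-parts identity
\[
\langle f-\dd^-\sigma_h, \dd\varphi\rangle_K \;=\; \langle \delta_K(f-\dd^-\sigma_h), \varphi\rangle_K + \int_{\partial K} \tr\varphi \wedge \tr\star(f-\dd^-\sigma_h),
\]
and the analogous one for $\langle \dd u_h, \dd z\rangle_K$. Summing over $K\in\mathcal T_h$, the boundary contributions on interior faces collapse into the jumps $[\tr\star(f-\dd^-\sigma_h)]$ and $[\tr\star \dd u_h]$ since $\tr\varphi$ and $\tr z$ are single-valued across interior faces and vanish on $\partial\Omega$; the volume residuals combine into $\delta_K(f-\dd^-\sigma_h)$ paired with $\varphi$ and $f-\dd^-\sigma_h-\delta_K\dd u_h$ paired with $z$.

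Finally, I would apply Cauchy-Schwarz element-by-element and face-by-face with weights $h_K$ and $h_e^{1/2}$, and absorb the remaining norms of $\varphi$ and $z$ using the scaled bounds \eqref{eq:iner_k_1}-\eqref{eq:inter_k_2}, giving
\[
\|\dd(u-u_h)\|^2 \;\lesssim\; \eta((\sigma_h,u_h),\mathcal T_h)\,\|\omega\|_{H\Lambda}.
\]
The loop is closed by the Poincar\'e inequality \eqref{eq:Poincare_d} on $\mathcal K$: $\|\omega\|_{H\Lambda} \lesssim \|\dd\omega\| = \|\dd(u-u_h)\|$, yielding $\|\dd(u-u_h)\|\leq C_2\,\eta((\sigma_h,u_h),\mathcal T_h)$. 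The main subtlety is the decoupling step: realizing that the $\sigma-\sigma_h$ contribution drops out on the $\mathcal K$-component of the Hodge decomposition is precisely what allows a pure energy-norm estimator, in contrast to the $H\Lambda$-stable bound \eqref{DH} of Demlow-Hirani, which additionally involves an $L^2$ estimator of the errors.
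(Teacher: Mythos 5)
Your proposal is correct and follows essentially the same route as the paper: Hodge decomposition to place the relevant test function in $\mathcal K$ (so that $\dd^-(\sigma-\sigma_h)\perp\mathcal K$ kills the cross term), the commuting quasi-interpolant with the localized decomposition of Lemma \ref{lem:quasi_inter_1}, element-wise integration by parts, and the Poincar\'e inequality \eqref{eq:Poincare_d} to close the loop. The only (harmless) deviation is that you treat the residual term $\langle f-\dd^-\sigma_h,\dd\varphi\rangle$ by integrating by parts directly, whereas the paper rewrites it as $\langle \dd^-(\sigma-\sigma_h),\dd^-\varphi\rangle$ and invokes Lemma \ref{lem:sigma}; both yield the same estimator terms.
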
 
\begin{proof}
For any $v \in H_0\Lambda^k$ ($1 \leq k \leq n - 1$), by Lemma \ref{lem:Hodge_Regular_dec} and \ref{lem:quasi_inter_1} there exist $\phi \in H_0\Lambda^{k-1}$, $\psi \in \mathcal K^k$, $\varphi \in H_0\Lambda^{k-1}(\Omega)\cap H^1\Lambda^{k-1}(\mathcal T_h)$, $\omega\in H_0\Lambda^{k}(\Omega)\cap H^1\Lambda^{k}(\mathcal T_h)$ and quasi interpolation operator $\Pi_h$ as in Lemma \ref{lem:quasi_inter} such that 
$$
v = \dd ^-\phi \oplus^{\bot_{L^2}} \psi,\quad \psi - \Pi_h\psi = \dd ^- \varphi + \omega\quad\text{and}\quad \|\dd ^-\varphi\| \lesssim \|\psi\|_{H\Lambda} \lesssim \|\dd v \|.
$$
Then, using the fact  $\dd v = \dd \psi$ and 
$$
\langle \dd^-(\sigma - \sigma_h), v_h\rangle + \langle\dd (u - u_h),\dd v_h\rangle = 0, \quad \dd^-(\sigma - \sigma_h) \bot \, \mathcal K,
$$
it holds
\begin{align*}
\langle \dd (u - u_h), \dd v\rangle & = \langle \dd (u - u_h), \dd \psi\rangle\\  & =  \langle \dd (u - u_h), \dd (\psi - \Pi_h\psi)\rangle + \langle \dd ^-(\sigma - \sigma_h),\psi - \Pi_h \psi\rangle \\
& =  \langle \dd (u - u_h), \dd \omega\rangle + \langle \dd ^-(\sigma - \sigma_h),\omega\rangle + \langle \dd ^-(\sigma - \sigma_h), \dd ^- \varphi \rangle.
\end{align*}
The second equation of \eqref{eq:mixed_formulation_con} implies that
$$
\langle \dd u,\dd \omega \rangle + \langle\dd^-\sigma,\omega\rangle = \langle f,\omega\rangle,
$$
and consequently
\begin{align*}
& \langle \dd (u - u_h), \dd v\rangle = \langle f - \dd^-\sigma_h,\omega\rangle - \langle \dd u_h,\dd\omega\rangle + \langle \dd^-(\sigma - \sigma_h),\dd^-\varphi\rangle \\
= &  \sum\limits_{K \in \mathcal T_h} \Big( \langle f - \dd ^-\sigma_h -\delta_K \dd u_h, \omega  \rangle_K - 
 \int_{\partial K}\tr (\omega ) \wedge \tr\star \dd u_h  \Big) \\
 &\qquad  + \langle \dd ^-(\sigma - \sigma_h), \dd ^- \varphi \rangle \\
 \leq &  \sum\limits_{K\in\mathcal T_h} \|f - \delta_K \dd u_h - \dd ^-\sigma_h\|_K \|\omega \|_K + \sum\limits_{e \in \mathcal E_h} \|[\tr\star \dd u_h]\|_e \|\tr(\omega )\|_e \\
&  \qquad + \|\dd ^-(\sigma - \sigma_h)\|\|\psi\|_{H\Lambda} \\
 \lesssim &  \left( \sum\limits_{K \in \mathcal T_h} h_K^2\| f - \delta_K \dd u_h - \dd ^-\sigma_h\|_K^2 + \sum\limits_{e \in \mathcal E_h} h_e \|[\tr\star \dd u_h]\|_e^2 \right)^{1/2} \|\psi\|_{H\Lambda} \\
&   \quad + \|\dd ^-(\sigma - \sigma_h)\|\|\psi\|_{H\Lambda}.
\end{align*}
Here, because $\omega|_K \in H^1\Lambda(K)$, the trace term $\|\tr(\omega )\|_e$ is meaningful and bounded by $\|\psi\|_{H\Lambda}$.
By the fact that 
$
\|\psi\|_{H\Lambda} \lesssim  \|\dd v\|,
$
and Lemma \ref{lem:sigma}, the desired result follows.
\end{proof}

As a summary of Lemmas \ref{lem:sigma} and \ref{lem:u}, we have the reliability result. 
\begin{theorem}[Reliability]
\label{them:reliability}
Let $(\sigma,u)\in H_0\Lambda^{k-1}(\Omega)\times H_0\Lambda^k(\Omega)$ and $(\sigma_h , u_h) \in V_h^{k-1}\times V_h^k$ ($1 \leq k \leq n-1$) be the solutions of equations \eqref{eq:mixed_formulation_con} and \eqref{eq:mixed_formulation_dis}, respectively. Assume that $f \in H^1\Lambda^k(\mathcal T_h)$. Then, it holds
$$
\|(\sigma - \sigma_h,u-u_h)\| \leq C_3 \eta((\sigma_h,u_h),\mathcal T_h).
$$
\end{theorem}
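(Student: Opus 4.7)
The plan is to assemble the theorem directly from Lemmas \ref{lem:sigma} and \ref{lem:u}, which together already control the two components of the norm defined in \eqref{eq:norm1}. There is essentially no new work beyond recognizing that the estimator $\eta(\sigma_h,\mathcal T_h)$ appearing in Lemma \ref{lem:sigma} is dominated by the full estimator $\eta((\sigma_h,u_h),\mathcal T_h)$, and then combining the two bounds in quadrature.

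First I would recall the definition $\|(\sigma-\sigma_h,u-u_h)\|^2 = \|\dd^{-}(\sigma-\sigma_h)\|^2 + \|\dd(u-u_h)\|^2$ from \eqref{eq:norm1}, which is indeed a norm on $\mathcal V + \mathcal V_h$ by Lemma \ref{lem:norm} (using the hypothesis of no harmonic forms). Next, by inspection of the definitions of $\eta(\sigma_h,\mathcal T_h)$ and $\eta((\sigma_h,u_h),\mathcal T_h)$, the former consists of exactly the $\delta_K(f-\dd^{-}\sigma_h)$ volume terms and the $[\tr\star(f-\dd^{-}\sigma_h)]$ jump terms, which are a subset of the terms in the latter; therefore
\[
\eta(\sigma_h,\mathcal T_h) \le \eta((\sigma_h,u_h),\mathcal T_h).
\]

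Applying Lemma \ref{lem:sigma} I get $\|\dd^{-}(\sigma-\sigma_h)\| \le C_1\,\eta(\sigma_h,\mathcal T_h) \le C_1\,\eta((\sigma_h,u_h),\mathcal T_h)$, and applying Lemma \ref{lem:u} I get $\|\dd(u-u_h)\| \le C_2\,\eta((\sigma_h,u_h),\mathcal T_h)$. Squaring and summing,
\[
\|(\sigma-\sigma_h,u-u_h)\|^2 \le (C_1^2+C_2^2)\,\eta^2((\sigma_h,u_h),\mathcal T_h),
\]
so the theorem holds with $C_3 = \sqrt{C_1^2+C_2^2}$.

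There is no genuine obstacle here, since the hard analytical work (the Hodge/regular decomposition, the commuting quasi-interpolant, and the orthogonality \eqref{eq:or_sigma}) has already been discharged in Lemmas \ref{lem:sigma} and \ref{lem:u}. The only minor thing to check is the comparison $\eta(\sigma_h,\mathcal T_h) \le \eta((\sigma_h,u_h),\mathcal T_h)$, which is immediate from the definitions since each term of the former appears verbatim in the latter.
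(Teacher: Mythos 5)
Your proposal is correct and is exactly what the paper does: Theorem \ref{them:reliability} is stated there simply as a summary of Lemmas \ref{lem:sigma} and \ref{lem:u}, combined precisely via the observation that every term of $\eta(\sigma_h,\mathcal T_h)$ appears in $\eta((\sigma_h,u_h),\mathcal T_h)$ and a sum in quadrature. No further comment is needed.
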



\subsection{Efficiency} 
The efficiency of the estimator $\eta$ is obtained by using the standard bubble function technique~\cite{Demlow2014,Verfurth1996}. We will list the efficiency results here and refer to~\cite{Demlow2014} for details. For any subset $\mathcal M_h \subset \mathcal T_h$, define the data oscillation as
\begin{equation}
\label{eq:oscillation}
\osc^2(f,\mathcal M_h)  =  \sum\limits_{K \in \mathcal M_h} h_K^2 \left( \|f - Q^k_h f\|_K^2  + \|\delta_Kf - Q^{k-1}_h\delta_K f\|_K^2 \right),
\end{equation}
where $Q^k_h: L^2\Lambda^k(\Omega) \mapsto V_h^k$ is the $L^2$ projection. We have the following efficiency results
\begin{theorem}[Efficiency~\cite{Demlow2014}]
Let $(\sigma,u)\in H_0\Lambda^{-}(\Omega)\times H_0\Lambda(\Omega)$ and $(\sigma_h,u_h) \in V_h^{-} \times V_h$ be the solutions of equations \eqref{eq:mixed_formulation_con} and \eqref{eq:mixed_formulation_dis}, respectively. Then there exists positive constants $C_4$ and $C_{\osc}$ such that
$$
C_4 \eta((\sigma_h,u_h),\mathcal T_h) \leq  \|(\sigma - \sigma_h,u - u_h)\| + C_{\osc}\osc(f,\mathcal T_h).
$$
\end{theorem}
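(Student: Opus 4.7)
The plan is to follow the classical Verfürth bubble-function technique, adapted to differential forms as in Demlow--Hirani~\cite{Demlow2014,Verfurth1996}, bounding each of the four local contributions to $\eta((\sigma_h,u_h),\mathcal T_h)$ by the true error plus oscillation and then summing. For every term, one constructs a localized polynomial test function supported on a single element (for a volume residual) or on the two-element patch $\omega_e = K^+\cup K^-$ across an interior face $e$ (for a jump residual), inserts it into the continuous weak formulation \eqref{eq:mixed_formulation_con} and subtracts the corresponding identity for $(\sigma_h,u_h)$. The two pointwise identities $\dd^-\sigma + \delta\dd u = f$ and $\delta\dd^-\sigma = \delta f$ (the second by applying $\delta$ and using $\delta\circ\delta = 0$) drive the computation.

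For the element volume term $h_K\|f-\delta_K\dd u_h - \dd^-\sigma_h\|_K$, I would replace $f$ by its $L^2$-projection $f_h = Q_h^k f$, set $R_K := f_h - \delta_K\dd u_h - \dd^-\sigma_h \in V_h^k|_K$, and test with $v = b_K R_K$ extended by zero outside $K$, where $b_K$ is the standard polynomial element bubble. Integration by parts produces no boundary contributions; the polynomial inverse inequality $\|\dd v\|_K \lesssim h_K^{-1}\|v\|_K$ combined with the bubble norm equivalence $\|R_K\|_K^2 \lesssim \langle R_K, b_K R_K\rangle$ gives
$$
h_K\|R_K\|_K \lesssim \|\dd^-(\sigma-\sigma_h)\|_K + \|\dd(u-u_h)\|_K,
$$
and the triangle inequality absorbs $h_K\|f-f_h\|_K$ into $\osc(f,K)$. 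The companion term $h_K\|\delta_K(f-\dd^-\sigma_h)\|_K$ is handled identically, testing the first equation of \eqref{eq:mixed_formulation_con} with $\tau = b_K\delta_K(f_h-\dd^-\sigma_h)\in V_h^{k-1}|_K$ and using $\delta_K\dd^-\sigma = \delta_K f$ pointwise on $K$; the passage from $f$ to $f_h$ and from $\delta_K f$ to $Q_h^{k-1}\delta_K f$ produces precisely the two contributions in the definition \eqref{eq:oscillation} of $\osc(f,\mathcal T_h)$.

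For the jump terms $h_e^{1/2}\|[\tr\star\dd u_h]\|_e$ and $h_e^{1/2}\|[\tr\star(f-\dd^-\sigma_h)]\|_e$, I would pick the standard face bubble $b_e$ supported in $\omega_e$, lift the polynomial jump to a form $E$ on $\omega_e$, and test with $v = b_e E$. Integration by parts on each $K^\pm$ separately produces exactly the jump integral on $e$, plus two volume integrals that are already controlled by the element residual estimate of the previous paragraph. The scaled trace and inverse inequalities $\|b_e E\|_{\omega_e} \lesssim h_e^{1/2}\|[\cdot]\|_e$ and $\|\dd(b_e E)\|_{\omega_e} \lesssim h_e^{-1/2}\|[\cdot]\|_e$, together with the bubble norm equivalence on $e$, then yield the desired bound, with the oscillation on $\omega_e$ collecting the data-approximation terms.

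The only genuine obstacle is the careful differential-form integration by parts on each simplex and the fact that the identity $\delta\dd^-\sigma = \delta f$ only holds piecewise for the discrete form $\sigma_h$, so the jump $[\tr\star(f-\dd^-\sigma_h)]$ across faces really does require a separate face-bubble argument rather than being absorbed into the volume term; this is where the discrete co-derivative $\delta_h$ being nonlocal while $\delta_K$ is elementwise becomes visible. Once the four local estimates are in hand, summing over $K\in\mathcal T_h$ and $e\in\mathcal E_h$ using finite overlap of the patches $\omega_e$ produces the global bound with constants $C_4$ and $C_{\osc}$ depending only on shape-regularity and the polynomial degree, as claimed.
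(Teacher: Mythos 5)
Your proposal is correct and follows essentially the same route as the paper, which gives no proof of its own but simply invokes the standard bubble-function technique of Verf\"urth and Demlow--Hirani~\cite{Verfurth1996,Demlow2014} --- precisely what you outline (element bubbles for the two volume residuals, face bubbles on the patch $\omega_e$ for the two jump residuals, with the data oscillation \eqref{eq:oscillation} absorbing the passage from $f$ to $Q_h^kf$ and from $\delta_K f$ to $Q_h^{k-1}\delta_K f$). One small slip worth noting: the residual $\delta_K(f-\dd^-\sigma_h)$ reflects the identity $\delta f=\delta\,\dd^-\sigma$, which is extracted from the \emph{second} equation of \eqref{eq:mixed_formulation_con} tested with $v=\dd^-\bigl(b_K\,\delta_K(f_h-\dd^-\sigma_h)\bigr)$ (using $\dd\circ\dd^-=0$ to kill the $\dd u$ term), not from the first equation as you wrote.
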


\section{Convergence of AMFEM}

In this section, we shall present our algorithm and prove its convergence  using approaches developed in~\cite{Dorfler1996,Morin;Nochetto;Siebert;2000,Morin;Nochetto;Siebert;2002,Stevenson2007,Chen;Holst;Xu2009,Feischl;Fuhrer;Pratetorius2014}. Such convergence results are well established for symmetric positive definite problems and nested finite element spaces. For saddle point systems and/or non-nested spaces, results are less standard~\cite{Chen;Holst;Xu2009,Huang;Xu2012,Chen;Xu;Zou2012}. Comparing with existing results, e.g., for Maxwell's equations~\cite{Chen;Xu;Zou2009,Zhong;Chen;Shu;Xu2012,Chen;Xu;Zou2012,Duan;Qiu;Tan;Zheng2016}, we obtain a convergence proof without assuming that the initial mesh size is small enough. 

In what follows, we replace the dependence on the actual mesh $\mathcal T$ by the iteration counter $l$, and let $h_l = \max\limits_{K\in \mathcal T_l} \{ h_K\}$ be the mesh size of $\mcal T_l$.

\noindent\smallskip
\breakline
\textbf{Algorithm 1.} Given an initial mesh $\mathcal T_0$, a marking parameter $0 < \theta < 1$, and a stopping tolerance ${\rm tol} > 0$. 
Solve equation \eqref{eq:mixed_formulation_dis} on mesh $\mathcal T_0$ to get the solution $(\sigma_0,u_0)$ and compute the error estimator $\eta_0 = \eta((\sigma_0,u_0),\mathcal T_0)$. Set $l = 0$ and iterate.

While $\eta_l > {\rm tol}$, do

\begin{description}
  \item[Step 1]  Solve equation \eqref{eq:mixed_formulation_dis} on mesh $\mathcal T_l$ to get the solution $(\sigma_l,u_l)$.
  \item[Step 2] Compute the error estimators $\eta((\sigma_l,u_l),\mathcal T_l)$ and $\eta(\sigma_l,\mathcal T_l)$.
  \item[Step 3] 
Mark an element set $\mathcal M_l$, such that
\begin{align}
  \label{eq:mark_sigma}
  \eta^2(\sigma_{l},\mathcal M_{l})  &\geq \theta \eta^2(\sigma_{l},\mathcal T_{l}),\\
  \label{eq:mark}
  \eta^2((\sigma_l,u_l),\mathcal M_l) & \geq \theta \eta^2((\sigma_l,u_l),\mathcal T_l) .
\end{align}
  
    \item[Step 4]  Refine each element $K\in \mathcal M_{l}$  by the newest          vertex bisection, and make some necessary completion to get a conforming and shape regular mesh $\mathcal T_{l+1}$.
\end{description}
\breakline

\medskip

In the marking step, we mark an element set $\mathcal M_l$ so that both error estimators $\eta(\sigma_l,\mathcal M_l)$ and $\eta((\sigma_l,u_l),\mathcal M_l)$ satisfy the D\"orfler marking. A possible choice of $\mathcal M_l$ can be obtained by marking separately for each and then take the union. The requirement \eqref{eq:mark_sigma} is to ensure the contraction of $\eta(\sigma_l)$ so that after few steps, we can obtain a quasi-orthogonality for the total error. We could do so by assuming the initial mesh size is small enough. By including \eqref{eq:mark_sigma}, we successfully remove such condition.

The following contraction result for $\eta(\sigma)$ can be proved using the arguments in~\cite[Theorem 4.1]{Feischl;Fuhrer;Pratetorius2014}. For completeness, we give a proof simplified by the orthogonality.
\begin{theorem}[Convergence of $\eta(\sigma)$]\label{them:con_sigma}
Let $(\sigma,u)\in V^{-}\times V$ be the solution of equation \eqref{eq:mixed_formulation_con}  and for any positive integers $l\text{ and }m$, $(\sigma_{l+m},u_{l+m}) \in V_{l+m}^{-} \times V_{l+m}$ and $(\sigma_{l},u_{l})\in V_{l}^{-}\times V_{l}$ be the solutions of equation \eqref{eq:mixed_formulation_dis} on meshes $\mathcal T_{l+m}$ and $\mathcal T_{l}$, respectively. Then there exist constants $ 0 < \varrho < 1$ and $C_5 > 0$ such that
\begin{equation}\label{etacontaction}
 \eta^2(\sigma_{l+m},\mathcal T_{l+m}) \leq C_5^2 \varrho^m \eta^2(\sigma_{l},\mathcal T_{l}).
\end{equation}
\begin{proof}
When D\"orfler marking is satisfied, the following result can  be proved, c.f.~\cite[Corollary 3.4]{Cascon;Kreuzer2008} and \cite[Lemma 3.1]{Feischl;Fuhrer;Pratetorius2014}: There exist constants $0 < q_c < 1$ and $C_{\theta}$ dependending only on $\theta$, such that
\begin{equation}\label{eq:continuous_sigma}
\eta^2(\sigma_{i+1},\mathcal T_{i+1}) \leq q_c \eta^2(\sigma_i,\mathcal T_i) +  C_{\theta}\|d(\sigma_{i+1} - \sigma_i)\|^2.
\end{equation} 
Therefore, by Lemmas \ref{lem:sigma_ortho} and \ref{lem:sigma}, for any $N \geq l+1$, it holds
\begin{align*}
\sum\limits_{i = l+1}^N\eta^2(\sigma_i,\mathcal T_i) & \leq \sum\limits_{i = l+1}^N \left [ q_c\eta^2(\sigma_{i - 1},\mathcal T_{i - 1}) + C\|\dd ^-(\sigma_i - \sigma_{i - 1})\|^2 \right ] \\
& \leq q_c \sum\limits_{i = l}^{N -1}  \eta^2(\sigma_{i },\mathcal T_{i })  + C \|\dd ^-(\sigma - \sigma_{l})\|^2\\
& \leq q_c \sum\limits_{i = l}^{N-1}  \eta^2(\sigma_{i },\mathcal T_{i })  + C C_1^2 \eta^2(\sigma_l,\mathcal T_l).
\end{align*}
Here, in the second inequality, we have used the orthogonality to get 
\begin{align*}
\sum\limits_{i = l+1}^N \|\dd^-(\sigma_i - \sigma_{i-1})\|^2 & = \|\dd^-(\sigma_N - \sigma_l)\|^2  = \|\dd^-(\sigma - \sigma_l)\|^2 - \|\dd^-(\sigma - \sigma_N)\|^2 \\
& \leq \|\dd^-(\sigma - \sigma_l)\|^2.
\end{align*}
Then, rearranging the terms and with the arbitrary choice of $N$, we obtain
\begin{align*}
\sum\limits_{i = l+1}^{\infty}\eta^2(\sigma_i,\mathcal T_i)  \leq \tilde C\eta^2(\sigma_l,\mathcal T_l) \qquad \text{for all positive integer } l,
\end{align*}
where $\tilde C = (q_c + CC_1^2)/(1 - q_c)$.
Therefore, we get
$$
(1 + \tilde C^{-1}) \sum\limits_{i = l + 1}^{\infty}\eta^2(\sigma_i,\mathcal T_i) \leq \sum\limits_{i = l + 1}^{\infty}\eta^2(\sigma_i,\mathcal T_i) + \eta^2(\sigma_l,\mathcal T_l) = \sum\limits_{i = l }^{\infty}\eta^2(\sigma_i,\mathcal T_i).
$$ 
By the inductive method, we have
\begin{align*}
\eta^2(\sigma_{l+m},\mathcal T_{l+m}) & \leq \sum\limits_{i = l + m}^{\infty}\eta^2(\sigma_i,\mathcal T_i) \leq (1 + \tilde C^{-1})^{-m} \sum\limits_{i = l }^{\infty}\eta^2(\sigma_i,\mathcal T_i) \\
& \leq (1 + \tilde C) (1 + \tilde C^{-1})^{-m}\eta^2(\sigma_l,\mathcal T_l).
\end{align*}
Let $C_5^2 = 1 + \tilde C$ and $\varrho = (1 + \tilde C^{-1})^{-1}$, then the desired result follows.
\end{proof}
\end{theorem}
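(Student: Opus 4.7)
The plan is to combine three ingredients: (i) a standard estimator reduction inequality that follows from the D\"orfler marking \eqref{eq:mark_sigma}, (ii) the full Galerkin orthogonality for $\dd^-\sigma$ proved in Lemma \ref{lem:sigma_ortho}, and (iii) the reliability bound $\|\dd^-(\sigma-\sigma_l)\|\leq C_1\eta(\sigma_l,\mathcal T_l)$ from Lemma \ref{lem:sigma}. From these I would derive a self-similar summability estimate for the tail of $\{\eta^2(\sigma_i,\mathcal T_i)\}$, from which geometric contraction is immediate.

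First, I would establish an estimator reduction step: since the refinement $\mathcal T_{i+1}$ is obtained by bisecting a D\"orfler-marked subset $\mathcal M_i$ with parameter $\theta$, the usual element/edge shrinkage by a fixed factor together with an inverse estimate and a Young-type inequality yields constants $0<q_c<1$ and $C_\theta>0$, depending only on $\theta$ and shape regularity, such that
$$\eta^2(\sigma_{i+1},\mathcal T_{i+1}) \leq q_c\,\eta^2(\sigma_i,\mathcal T_i) + C_\theta\,\|\dd^-(\sigma_{i+1}-\sigma_i)\|^2.$$
This is the by-now standard perturbation argument of Cascon--Kreuzer--Nochetto--Siebert, applied to the $\sigma$-only estimator whose residual terms depend continuously on $\sigma_h$ in the $\dd^-$-seminorm.

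Next, I would sum this inequality from $i=l$ to $i=N-1$ and exploit orthogonality to telescope the increment sum. Because the spaces are nested, Lemma \ref{lem:sigma_ortho} yields
$$\sum_{i=l}^{N-1}\|\dd^-(\sigma_{i+1}-\sigma_i)\|^2 = \|\dd^-(\sigma_N-\sigma_l)\|^2 \leq \|\dd^-(\sigma-\sigma_l)\|^2 \leq C_1^2\,\eta^2(\sigma_l,\mathcal T_l),$$
where the last step uses Lemma \ref{lem:sigma}. After a rearrangement absorbing $q_c\sum\eta^2(\sigma_i,\mathcal T_i)$ into the left-hand side, I obtain, uniformly in $N$ and $l$,
$$\sum_{i=l+1}^{N}\eta^2(\sigma_i,\mathcal T_i) \leq \tilde C\,\eta^2(\sigma_l,\mathcal T_l),\qquad \tilde C=\frac{q_c+C_\theta C_1^2}{1-q_c}.$$
Letting $N\to\infty$ and writing $S_l:=\sum_{i\geq l}\eta^2(\sigma_i,\mathcal T_i)$, the self-similar bound $S_{l+1}\leq \tilde C(S_l-S_{l+1})$ gives the geometric decay $S_{l+1}\leq \varrho\,S_l$ with $\varrho=\tilde C/(1+\tilde C)\in(0,1)$. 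Since $\eta^2(\sigma_{l+m},\mathcal T_{l+m})\leq S_{l+m}\leq \varrho^{m}S_l\leq (1+\tilde C)\varrho^{m}\eta^2(\sigma_l,\mathcal T_l)$, the claim follows with $C_5^2=1+\tilde C$.

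The step I expect to be most delicate is the estimator reduction inequality. In this saddle-point setting, one must verify that $\eta(\sigma_l)$, which involves only $f-\dd^-\sigma_l$ and its traces, perturbs under mesh refinement \emph{purely} in terms of $\|\dd^-(\sigma_{i+1}-\sigma_i)\|$ and not of $\|\dd(u_{i+1}-u_i)\|$; otherwise the telescoping via Lemma \ref{lem:sigma_ortho} would be unavailable and one would need to couple into the $u$-estimator. Once that clean separation is confirmed, the rest of the argument is the geometric-series machinery driven by full orthogonality.
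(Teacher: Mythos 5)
Your proposal is correct and follows essentially the same route as the paper: the estimator reduction inequality driven by the D\"orfler marking \eqref{eq:mark_sigma}, telescoping of the increments $\|\dd^-(\sigma_{i+1}-\sigma_i)\|^2$ via the full orthogonality of Lemma \ref{lem:sigma_ortho}, the bound $\|\dd^-(\sigma-\sigma_l)\|\leq C_1\eta(\sigma_l,\mathcal T_l)$ from Lemma \ref{lem:sigma}, and the resulting tail-summability estimate $S_{l+1}\leq \tilde C\,\eta^2(\sigma_l,\mathcal T_l)$ yielding geometric decay with the same $\varrho=(1+\tilde C^{-1})^{-1}$ and $C_5^2=1+\tilde C$. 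Your closing caution about verifying that the $\sigma$-estimator perturbs only in the $\dd^-$-seminorm is well placed; the paper handles this by citing the standard references rather than reproving it.
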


We aim to prove that Algorithm 1 will be terminated in finite steps. We will first use the contraction of $\eta(\sigma_l,\mathcal T_l)$, see Theorem \ref{them:con_sigma}, to prove a quasi-orthogonality result and then derive the convergence of a subsequence. For easy of notation, denote by
\begin{align*}
e_l^2  &= \|(\sigma - \sigma_l,u - u_l)\|^2,\\ 
E_{l , m}^2& = \|(\sigma_{l+m} - \sigma_l,u_{l + m} - u_l)\|^2,\\ 
\eta_l^2  &= \eta^2((\sigma_l,u_l),\mathcal T_l).
\end{align*}

\begin{lemma}\label{lem:results}
For any positive integer $m$, there exist constants $0 < \mu < 1$ , $C_6 > 0$ and $C_{\theta} > 0$, such that
\begin{align}
\label{eq:re}  
& e_l^2  \leq  C_6\eta_l^2 ,\\
\label{eq:quasi_ortho} & e_{l+m}^2 \leq e_l^2 - (1 - \epsilon_{l,m})E_{l , m}^2  + \epsilon_{l,m} \eta_l^2,\\
\label{eq:eta} &  \eta_{l+m}^2 \leq   \mu \eta^2_l + C_{\theta} E_{l , m}^2,
\end{align}
where $\epsilon_{l,m} = C_AC_1C_5h_{l+m}^s\varrho^{m/2}$.
\end{lemma}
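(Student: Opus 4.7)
The plan is to establish the three inequalities in order, using the orthogonality results of Section 3, the reliability bounds of Section 4, and the contraction of $\eta(\sigma)$ from Theorem \ref{them:con_sigma}.

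Inequality \eqref{eq:re} is simply Theorem \ref{them:reliability} applied at iteration $l$ and squared, so I would take $C_6 = C_3^2$. For the quasi-orthogonality \eqref{eq:quasi_ortho}, I would add the exact orthogonality of $\sigma$ from Lemma \ref{lem:sigma_ortho} (between $\mathcal T_l$ and $\mathcal T_{l+m}$) to the quasi-orthogonality of $u$ from Lemma \ref{lem:u_ortho}, obtaining
\begin{align*}
e_{l+m}^2 & \leq e_l^2 - \|\dd^-(\sigma_{l+m}-\sigma_l)\|^2 - \|\dd(u_{l+m}-u_l)\|^2 \\
& \quad + 2 C_A h_{l+m}^s \|\dd^-(\sigma-\sigma_{l+m})\|\, \|\dd(u_{l+m}-u_l)\|.
\end{align*}
The point of the extra marking \eqref{eq:mark_sigma} is precisely to control the cross term: chaining Lemma \ref{lem:sigma} (reliability of $\eta(\sigma)$), Theorem \ref{them:con_sigma} (contraction), and the trivial dominance $\eta(\sigma_l,\mathcal T_l) \leq \eta_l$ yields $\|\dd^-(\sigma-\sigma_{l+m})\| \leq C_1 C_5 \varrho^{m/2}\eta_l$. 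The cross term becomes $2\epsilon_{l,m}\eta_l \|\dd(u_{l+m}-u_l)\|$, and Young's inequality bounds it by $\epsilon_{l,m}\bigl(\eta_l^2 + \|\dd(u_{l+m}-u_l)\|^2\bigr)$. Substituting and recalling $E_{l,m}^2 = \|\dd^-(\sigma_{l+m}-\sigma_l)\|^2 + \|\dd(u_{l+m}-u_l)\|^2$ gives \eqref{eq:quasi_ortho} after a short rearrangement.

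For the estimator reduction \eqref{eq:eta} I would invoke the Cascon--Kreuzer--Nochetto--Siebert (CKNS) reduction principle, applied directly to the mesh pair $(\mathcal T_l, \mathcal T_{l+m})$ and to the full estimator $\eta$ (rather than $\eta(\sigma)$ as in \eqref{eq:continuous_sigma}). The marking \eqref{eq:mark} provides the D\"orfler criterion on $\mathcal T_l$ with parameter $\theta$; every element of $\mathcal M_l$ is bisected already in $\mathcal T_{l+1}$, hence remains properly refined in $\mathcal T_{l+m}$ for each $m \geq 1$. The abstract reduction lemma then delivers $\eta_{l+m}^2 \leq \mu \eta_l^2 + C_\theta \,\|(\sigma_{l+m}-\sigma_l, u_{l+m}-u_l)\|^2$ with $\mu = 1 - c\theta \in (0,1)$ independent of $m$, which is \eqref{eq:eta}.

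The main technical obstacle is \eqref{eq:quasi_ortho}: producing the explicit factor $\epsilon_{l,m} = C_A C_1 C_5 h_{l+m}^s \varrho^{m/2}$ in front of both $E_{l,m}^2$ and $\eta_l^2$ requires the contraction of $\eta(\sigma)$ to propagate through the cross term, which is precisely why the separate D\"orfler marking \eqref{eq:mark_sigma} was introduced. Verifying \eqref{eq:eta} is routine once one notes that nestedness $V_l \subset V_{l+m}$ and Lipschitz dependence of each residual/jump term on $(\sigma_h, u_h)$ are unchanged when $\mathcal T_{l+1}$ is replaced by $\mathcal T_{l+m}$, so the CKNS argument transfers verbatim.
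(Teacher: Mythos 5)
Your proposal is correct and follows essentially the same route as the paper: \eqref{eq:re} is Theorem \ref{them:reliability} with $C_6=C_3^2$; \eqref{eq:quasi_ortho} combines Lemmas \ref{lem:sigma_ortho} and \ref{lem:u_ortho} with the chain $\|\dd^-(\sigma-\sigma_{l+m})\|\leq C_1\eta(\sigma_{l+m},\mathcal T_{l+m})\leq C_1C_5\varrho^{m/2}\eta(\sigma_l,\mathcal T_l)\leq C_1C_5\varrho^{m/2}\eta_l$ and Young's inequality, exactly as in the paper; and \eqref{eq:eta} is obtained from the same Cascon--Kreuzer--Nochetto--Siebert estimator-reduction argument underlying \eqref{eq:continuous_sigma}, which the paper likewise invokes without further detail.
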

\begin{proof}
Equation \eqref{eq:re} is the reliability of $\eta_l$ with $C_6 = C_3^2$, see Theorem \ref{them:reliability}.

By Lemmas \ref{lem:sigma_ortho} and \ref{lem:u_ortho}, and equation \eqref{etacontaction}, we have
\begin{align*}
e_{l+m}^2 & \leq e_l^2 - E_{l , m}^2  + 2C_Ah_{l+m}^s\|\dd ^-(\sigma - \sigma_{l+m} )\| \|\dd (u_{l+m} - u_l)\| \\
& \leq e_l^2 - E_{l , m}^2  + 2C_A C_1 h_{l+m}^s \eta(\sigma_{l+m},\mathcal T_{l+m}) \|\dd (u_{l+m} - u_l)\| \\
& \leq e_l^2 - E_{l , m}^2  + 2\epsilon_{l,m} \eta(\sigma_l,\mathcal T_l) \|\dd (u_{l+m} - u_l)\| \\
&\leq e_l^2 - (1 - \epsilon_{l,m})E_{l , m}^2  + \epsilon_{l,m}\eta_l^2.
\end{align*}
Thus \eqref{eq:quasi_ortho}  is proved.

Inequality \eqref{eq:eta} is a variation of \eqref{eq:continuous_sigma},
and hence can be obtained by the same argument.
\end{proof}

Due to the fact that $\varrho \in (0,1)$ and $h_l$ is non-increasing, $\epsilon_{l,m}$ is non-increasing with respect to the index $l$ and geometrically decreasing with respect to the index $m$.

\begin{theorem}[Convergence]
Let $(\sigma,u)\in H_0\Lambda^{-}(\Omega)\times H_0\Lambda(\Omega)$ be the solution of equation \eqref{eq:mixed_formulation_con}. Let $k,m$ be two non-negative integers and $(\sigma_{(k+1)m},u_{(k+1)m}) \in V_{(k+1)m}^{-} \times V_{(k+1)m}$ and $(\sigma_{km},u_{km})\in V_{km}^{-}\times V_{km}$ be the solutions of equation \eqref{eq:mixed_formulation_dis} on meshes $\mathcal T_{(k+1)m}$ and $\mathcal T_{km}$, respectively. Then for $m$ large enough s.t. 
\begin{equation}\label{eq:m}
\epsilon_{0,m} < \frac{1 - \mu}{1 - \mu + C_{\theta}},
\end{equation}
there exist constants $\alpha_m > 0$, $0 < \rho_m < 1$ satisfying  
$$
e_{(k+1)m}^2 + \alpha_m \eta_{(k+1)m}^2 \leq \rho_m (e_{km}^2 + \alpha_m \eta_{km}^2 ),
$$
and consequently
$$
e_{km}^2 + \alpha_m \eta_{km}^2 \leq \rho_m^k (e_{0}^2 + \alpha_m \eta_{0}^2 ).
$$
\end{theorem}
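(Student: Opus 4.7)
The plan is to combine the three inequalities of Lemma~\ref{lem:results} into a single contraction for the weighted quantity $e_{km}^2 + \alpha_m \eta_{km}^2$, in the spirit of~\cite{Cascon;Kreuzer2008}, but accommodating the quasi-orthogonality loss $\epsilon_{km,m}$. Applying \eqref{eq:quasi_ortho} and $\alpha_m$ times \eqref{eq:eta} from index $km$ to $(k+1)m$ and adding gives
$$
e_{(k+1)m}^2 + \alpha_m \eta_{(k+1)m}^2 \leq e_{km}^2 - \bigl[(1-\epsilon_{km,m}) - \alpha_m C_\theta\bigr] E_{km,m}^2 + \bigl(\epsilon_{km,m} + \alpha_m \mu\bigr) \eta_{km}^2.
$$
I would first choose $\alpha_m$ so that the bracket multiplying $E_{km,m}^2$ is nonnegative, so that term can be discarded. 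Because $\epsilon_{l,m}$ is non-increasing in $l$ (as $h_l$ is non-increasing), it is enough to enforce $\alpha_m C_\theta \leq 1-\epsilon_{0,m}$, which then holds uniformly in $k$.

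Next, to actually extract a contraction from the $e_{km}^2$ term, I would invoke the reliability estimate \eqref{eq:re} in the usual weighted form: for any $\beta \in (0,1-\mu)$,
$$
e_{km}^2 \leq (1-\beta)\, e_{km}^2 + \beta C_6\, \eta_{km}^2.
$$
Substituting this and using $\epsilon_{km,m}\leq\epsilon_{0,m}$ yields
$$
e_{(k+1)m}^2 + \alpha_m \eta_{(k+1)m}^2 \leq (1-\beta)\, e_{km}^2 + \bigl(\beta C_6 + \epsilon_{0,m} + \alpha_m \mu\bigr) \eta_{km}^2.
$$
Setting $\rho_m := 1-\beta$, the desired inequality becomes the requirement $\beta C_6 + \epsilon_{0,m} + \alpha_m \mu \leq (1-\beta)\alpha_m$, i.e.\ $\alpha_m \geq (\beta C_6 + \epsilon_{0,m})/(1-\beta-\mu)$, which in particular forces $\rho_m>\mu$.

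The main obstacle is to verify that the two requirements on $\alpha_m$,
$$
\frac{\beta C_6 + \epsilon_{0,m}}{1-\beta-\mu} \;\leq\; \alpha_m \;\leq\; \frac{1-\epsilon_{0,m}}{C_\theta},
$$
are simultaneously satisfiable for some $\beta \in (0,1-\mu)$. In the limit $\beta \downarrow 0$ the compatibility of these bounds reduces to $\epsilon_{0,m}\, C_\theta \leq (1-\epsilon_{0,m})(1-\mu)$, which is exactly the hypothesis \eqref{eq:m} after rearrangement. By continuity in $\beta$, the inequality remains strict for all sufficiently small $\beta>0$, so there exist admissible $\alpha_m > 0$ and $\rho_m \in (\mu,1)$ depending only on $m$ (and not on $k$). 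The one-step contraction is thus established, and the telescoping bound $e_{km}^2 + \alpha_m \eta_{km}^2 \leq \rho_m^k (e_0^2 + \alpha_m \eta_0^2)$ follows by straightforward induction in $k$.
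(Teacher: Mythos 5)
Your proposal is correct and follows essentially the same route as the paper: combine \eqref{eq:quasi_ortho} with $\alpha_m$ times \eqref{eq:eta}, pick $\alpha_m \le C_\theta^{-1}(1-\epsilon_{0,m})$ to discard the $E_{km,m}^2$ term, absorb a fraction of $e_{km}^2$ into $\eta_{km}^2$ via reliability \eqref{eq:re}, and observe that hypothesis \eqref{eq:m} is exactly what makes the resulting contraction factor less than one. The only cosmetic difference is that the paper fixes $\alpha_m = C_\theta^{-1}(1-\epsilon_{0,m})$ and solves explicitly for $\rho_m = (C_6+\alpha_m\mu+\epsilon_{0,m})/(C_6+\alpha_m)$, whereas you argue existence of an admissible pair $(\alpha_m,\rho_m)$ by a continuity argument in $\beta=1-\rho_m$.
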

\begin{proof}
The existence of $m$ satisfying \eqref{eq:m} is obtained by the fact that $\lim_{m\to \infty}\epsilon_{0,m} = 0$.
For any $\alpha > 0$, equations \eqref{eq:quasi_ortho} and \eqref{eq:eta} imply that
\begin{align*}
e_{(k+1)m}^2 + \alpha \eta^2_{(k+1)m} & \leq e_{km}^2 - (1-\epsilon_{km,m})E_{(k+1)m, km}^2   \\
&\quad + \left(\epsilon_{km,m} + \alpha\mu\right) \eta_{km}^2+ \alpha C_{\theta}E_{(k+1)m, km}^2 .
\end{align*}
Choosing $\alpha = \alpha_m : = C_{\theta}^{-1}(1 - \epsilon_{0,m})>0$, as $\epsilon_{0,m}<1$ by \eqref{eq:m}. Using the inequality $\epsilon_{km,m} \leq \epsilon_{0,m}$, we can get
\begin{align*}
e_{(k+1)m}^2 + \alpha_m \eta^2_{(k+1)m} & \leq e_{m}^2 + \left(\epsilon_{0,m} + \alpha_m \mu\right) \eta_{km}^2.
\end{align*}
Let $\rho$ be a number in $(0,1)$ to be determinated in a moment. We then have
\begin{align*}
e_{(k+1)m}^2 + \alpha_m \eta^2_{(k+1)m} & \leq \rho e_{km}^2 +  (1 - \rho)e_{km}^2 + \left(\epsilon_{0,m} + \alpha_m \mu\right) \eta_{km}^2\\
& \leq \rho e_{km}^2 + \Big[ C_6 \left(1  - \rho\right) + \epsilon_{0,m} + \alpha_m \mu \Big]\eta_{km}^2 \\
& = \rho \left\{ e_{km}^2 + \frac{C_6 \left(1  - \rho\right) + \epsilon_{0,m} + \alpha_m \mu}{\rho} \eta_{km}^2 \right\}.
\end{align*}
This suggests us to choose $\rho$ such that
\begin{equation}\label{alpha_eq}
\alpha_m = \frac{C_6 \left(1  - \rho\right) + \epsilon_{0,m} + \alpha\mu}{\rho}. 
\end{equation}
Solving \eqref{alpha_eq} to get
$$
\rho = \rho_m = \frac{C_6 + \alpha\mu + \epsilon_{0,m} }{C_6 + \alpha}.
$$
It is obvious that $\rho_m >0$. Now we will show $\rho_m < 1$. Recall $\alpha_m = C_{\theta}^{-1}(1 - \epsilon_{0,m})$, thus $\rho_m < 1$ is equivalent to 
\begin{equation}\label{eq:rholeq1}
 (1 - \mu + C_{\theta} ) \epsilon_{0,m} < 1 - \mu,
\end{equation}
which can be deduced from \eqref{eq:m}. Then the desired result follows.
\end{proof}

We conclude this section with the following remarks.
\begin{remark}\label{re:h0}\rm
\begin{enumerate}
  \item We do not need to know the value of $m$ in practice. We just  iterate, a subsequence of $\{\eta(\sigma_l,u_l)\}|_{l = 1}^{\infty}$, with bounded index gap, will converge to zero which implies that Algorithm 1 will terminate in finite steps.  
 
 \item Equation \eqref{eq:m} implies that $m$ may not be a big number. The proof can be easily modified to the sub-sequence with index $l+ k m$ for fixed positive integers $l,m$ and varying $k=0,1,2,\ldots$. Then the constraint \eqref{eq:m} will be posed for $\epsilon_{l,m}$ which might hold for $m = 0$ if $h_l$ is already small enough. Indeed in the existing work for Maxwell's equations~\cite{Chen;Xu;Zou2009,Zhong;Chen;Shu;Xu2012,Chen;Xu;Zou2012,Duan;Qiu;Tan;Zheng2016}, the quasi-orthogonality is obtained by assuming the initial grid size is small enough, so that \eqref{eq:m} holds with $m = 0$.  We include the marking \eqref{eq:mark_sigma} for $\sigma$ to guarantee \eqref{eq:m} holds without this assumption. 
\end{enumerate}
\end{remark}
\smallskip

\begin{remark}[Optimality with the fine enough initial mesh assumption]\rm From Remark \ref{re:h0}, we can see that marking step \eqref{eq:mark_sigma} for $\sigma$ is only used to remove the assumption that the initial mesh $\mathcal T_0$ is fine enough. If we accept this assumption, then we can remove the marking step \eqref{eq:mark_sigma} for $\sigma$ and only mark a minimal elements set using marking strategy \eqref{eq:mark}. By assuming $h_0$ is small enough, and by Lemma \ref{lem:sigma_ortho} and \ref{lem:u_ortho}, we will have the quasi-orthogonality 
\begin{equation}\label{eq:quasi_ortho_h_small}
(1 - C_Ah_{l+1}^s) e_{l+1}^2 \leq e_l^2 - (1 - C_A h_{l+1}^s)E_l^2.
\end{equation}
Using \eqref{eq:quasi_ortho_h_small} and following the same line as the proof of Theorem 4.1 in~\cite{Feischl;Fuhrer;Pratetorius2014}, we can prove the contraction of $\eta_l$.

In~\cite{Falk;Winther2014}, Falk and Winther constructed a localized projector and proved some stable and approximation properties of this projector for non-homogenous boundary conditions. They also briefly remarked that such properties can be extended to the case with homogenous boundary conditions. A localized upper bound can be established using Falk and Winther's interpolant assuming that the expected properties of the Falk and Winther's interpolant hold also for homogeneous boundary conditions. The optimality then follows by using the standard techniques, such as in~\cite{Cascon;Kreuzer2008,Feischl;Fuhrer;Pratetorius2014,Carstensen;Feischl;Praetoriu2014} with the assumption: the initial mesh size is small enough. $\Box$
\end{remark}

\section{Examples}

In this section we translate our results into standard notation of vector calculus for $n = 3$. The case $k = 1$ or $k = 2$ corresponds to the vector Laplacian operator $\curl\curl - \grad\div$ with different boundary conditions and $H_0\Lambda^0(\Omega) = H_0^1(\Omega)$, $H_0\Lambda^1(\Omega) = \bs H_0(\curl;\Omega)$, $H_0\Lambda^2(\Omega) = \bs H_0(\div;\Omega)$, and $H_0\Lambda^3(\Omega) = L_0^2(\Omega)$. The discrete space $V_h^0$ is the well-known Lagrange element space, i.e., continuous and piecewise polynomials space, $V_h^1$ is the edge element space,
$V_h^2$ is the face element space,
and $V_h^3$ is discontinuous and piecewise polynomial space.

\subsection{Vector Laplacian: $k = 1$} Equation \eqref{eq:mixed_formulation_con} can be written as: Find $(\sigma,u) \in H_0^1(\Omega) \times \bs H_0(\curl;\Omega)$ such that 
\begin{equation}\label{eq:k1}
\left\{ 
\begin{array}{llllll}
\langle \sigma ,\tau \rangle & - & \langle \grad \tau, u\rangle & = & 0 &\text{for all }\tau \in H_0^1(\Omega)\\
\langle \grad\sigma, v\rangle & + & \langle \curl u, \curl v \rangle & = & \langle f, v \rangle & \text{for all } v \in \bs H_0(\curl;\Omega)
\end{array}  
\right.
\end{equation}
and the discrete formulation is: Find $(\sigma_h, u_h) \in V_h^0 \times V_h^1$ such that  
\begin{equation}\label{eq:k1_dis}
\left\{ 
\begin{array}{llllll}
\langle \sigma_h ,\tau_h \rangle & - & \langle \grad \tau_h,u_h\rangle & = & 0 &\text{for all }\tau_h \in V_h^0\\
\langle \grad\sigma_h, v_h\rangle & + & \langle \curl u_h, \curl v_h \rangle & = & \langle f, v_h \rangle & \text{for all } v_h \in V_h^1
\end{array}  
\right.
\end{equation}
The \emph{a posteriori} error estimator $\eta$ is defined as
\begin{align*}
\eta^2((\sigma_h,u_h),\mathcal T_h) & =  \sum\limits_{K \in \mathcal T_h} h_K^2\left( \|\div_K(f - \grad\sigma_h)\|_K^2 + \|f - \curl_K\curl u_h - \grad\sigma_h\|_K^2 \right) \\
&\quad + \sum\limits_{e \in \mathcal E_h} h_e\left(  \|[f - \grad \sigma_h\cdot \bs n]\|_e^2 + \| [\curl u_h \times \bs n] \|_e^2 \right).
\end{align*}
The data oscillation is defined as
$$
\osc^2(f,\mathcal T_h) = \sum\limits_{K \in \mathcal T_h} h_K^2\left( \|f - Q_hf\|_K^2 + \| \div f - Q_h \div f\|_K^2 \right).
$$

The efficiency and reliability of the \emph{a posteriori} error estimators is given as follows:
\begin{align*}
&  \|\grad(\sigma - \sigma_h)\|^2 + \|\curl(u - u_h)\|^2  \leq C_3^2 \eta^2((\sigma_h,u_h),\mathcal T_h),\\
&C_4^2 \eta^2((\sigma_h,u_h),\mathcal T_h)  \leq  \|\grad(\sigma - \sigma_h)\|^2 + \|\curl(u - u_h)\|^2 + C\osc^2(f,\mathcal T_h),
\end{align*}
and the convergence result is  
\begin{eqnarray*}
& &\|\grad(\sigma - \sigma_{(k+1)m})\|^2 + \|\curl(u - u_{(k+1)m})\|^2 + \alpha_m\eta^2((\sigma_{(k+1)m},u_{(k+1)m}),\mathcal T_{(k+1)m})\\
& \leq & \rho_m \Big( \|\grad(\sigma - \sigma_{km})\|^2 + \|\curl(u - u_{km})\|^2 + \alpha_m\eta^2((\sigma_{km},u_{km}),\mathcal T_{km}) \Big).
\end{eqnarray*}

\subsection{Maxwell's equations}

We consider a prototype of Maxwell's equation with divergence free constraint
\begin{equation}\label{eq:maxwell}
\curl\curl u = f,\ \div u = 0,\ \text{ in } \Omega,\qquad  u\times n = 0\text{ on } \partial\Omega.
\end{equation}
For the well-posedness of equation \eqref{eq:maxwell},  the right hand side $f$ should   satisfy $\div f = 0$.

The mixed form of equation \eqref{eq:maxwell} is: Find $(\sigma,u) \in H_0^1(\Omega) \times \bs H_0(\curl;\Omega)$ such that
\begin{equation}\label{eq:maxwell_mix}
\left\{ 
\begin{array}{llllll}
&  & \langle u, \grad \tau\rangle & = & 0 &\text{for all }\tau \in H_0^1(\Omega),\\
\langle \curl u, \curl v \rangle & + & \langle \grad\sigma, v\rangle  & = & \langle f, v \rangle & \text{for all } v \in \bs H_0(\curl;\Omega).
\end{array}  
\right.
\end{equation}
The discrete form of equation \eqref{eq:maxwell_mix} is: Find $(\sigma_h,u_h) \in V_h^0 \times V_h^1$ such that
\begin{equation}\label{eq:maxwell_mix_dis}
\left\{ 
\begin{array}{llllll}
&  & \langle u_h, \grad \tau_h\rangle & = & 0 &\text{for all }\tau_h \in V^0_h \\
\langle \curl u_h, \curl v_h \rangle & + & \langle \grad\sigma_h, v_h\rangle  & = & \langle f, v_h \rangle & \text{for all } v_h \in V_h^1.
\end{array}  
\right.
\end{equation}

The system \eqref{eq:maxwell_mix} is different with \eqref{eq:k1} in that the term $\langle \sigma,\tau \rangle$ is missing in the first equation of \eqref{eq:maxwell_mix}. Because of that, the equation $\langle u,\grad \tau\rangle = 0$ implies that $u \in \mathcal K$, and thus $\|\curl u\|$ alone is a norm. The orthogonality \eqref{eq:orth_sigma} for $\sigma$ and quasi-orthogonality \eqref{eq:orth_u} for $u$ are still true. Following the same line as that of the Hodge Laplacian of $k = 1$, we obtain the following convergence result
\begin{align*}
&\|\grad(\sigma - \sigma_{(k+1)m})\|^2 + \|\curl(u - u_{(k+1)m})\|^2 + \alpha_m\eta^2((\sigma_{(k+1)m},u_{(k+1)m}),\mathcal T_{(k+1)m})\\
 \leq & \rho_m \Big( \|\grad(\sigma - \sigma_{km})\|^2 + \|\curl(u - u_{km})\|^2 + \alpha_m \eta^2((\sigma_{km},u_{km}),\mathcal T_{km}) \Big).
\end{align*}
We include the $\sigma$ part to ensure the contraction of the total error without the assumption on the initial mesh size.

It should be pointed out that, in~\cite{Chen;Xu;Zou2012,Duan;Qiu;Tan;Zheng2016}, the convergence of AMFEM was given for Maxwell's equations with similar error estimator and quasi-orthogonality results under the assumption that the initial mesh is fine enough. We remove this assumption in our approach.

\subsection{Vector Laplacian: $k = 2$} Equation \eqref{eq:mixed_formulation_con} becomes: Find $(\sigma,u) \in \bs H_0(\curl;\Omega)    \times \bs H_0(\div;\Omega)$ such that 
\begin{equation}
\left\{ 
\begin{array}{llllll}
\langle \sigma ,\tau \rangle & - & \langle \curl\tau,u\rangle & = & 0 &\text{for all }\tau \in \bs H_0(\curl;\Omega)\\
\langle \curl\sigma, v\rangle & + & \langle \div u, \div v \rangle & = & \langle f, v \rangle & \text{for all } v \in \bs H_0(\div;\Omega)
\end{array}  
\right.
\end{equation}
and the discrete formulation is: Find $(\sigma_h, u_h) \in V_h^1 \times V_h^2$ such that  
\begin{equation}
\left\{ 
\begin{array}{llllll}
\langle \sigma_h ,\tau_h \rangle & - & \langle \curl\tau_h,u_h\rangle & = & 0 &\text{for all }\tau_h \in V_h^1\\
\langle \curl\sigma_h, v_h\rangle & + & \langle \div u_h, \div v_h \rangle & = & \langle f, v_h \rangle & \text{for all } v_h \in V_h^2
\end{array}  
\right.
\end{equation}
The \emph{a posteriori} error estimator $\eta$ is defined as
\begin{align*}
\eta^2((\sigma_h,u_h),\mathcal T_h) & =  \sum\limits_{K \in \mathcal T_h} h_K^2\left( \|\curl_K(f - \curl\sigma_h)\|_K^2 + \|f + \grad_K\div u_h - \curl\sigma_h\|_K^2 \right) \\
&\quad + \sum\limits_{e \in \mathcal E_h} h_e\left(  \|[f - \curl \sigma_h\times \bs n]\|_e^2 + \| [\div u_h  \bs n] \|_e^2 \right).
\end{align*}
The data oscillation is defined as
$$
\osc^2(f,\mathcal T_h) = \sum\limits_{K \in \mathcal T_h} h_K^2\left( \|f - Q_hf\|_K^2 + \| \curl f - Q_h^-\curl f\|_K^2 \right).
$$

The efficiency and reliability of the \emph{a posteriori} error estimators is given as follows:
\begin{align*}
& \|\curl(\sigma - \sigma_h)\|^2 + \|\div(u - u_h)\|^2  \leq C_3^2\eta^2((\sigma_h,u_h),\mathcal T_h),\\
& C_4^2 \eta^2((\sigma_h,u_h),\mathcal T_h)  \leq  \|\curl(\sigma - \sigma_h)\|^2 + \|\div(u - u_h)\|^2 + C\osc^2(f,\mathcal T_h),
\end{align*}
and the convergence result is
\begin{eqnarray*}
& &\|\curl(\sigma - \sigma_{(k+1)m})\|^2 + \|\div(u - u_{(k+1)m})\|^2 + \alpha_m\eta^2((\sigma_{(k+1)m},u_{(k+1)m}),\mathcal T_{(k+1)m})\\
& \leq & \rho_m \Big( \|\curl(\sigma - \sigma_{km})\|^2 + \|\div(u - u_{km})\|^2 + \alpha_m \eta^2((\sigma_{km},u_{km}),\mathcal T_{km}) \Big).
\end{eqnarray*}

It seems that there are no convergence results of the adaptive $\bs H(\curl) \times \bs H(\div)$ mixed finite element method in the literature.


\section*{Acknowledgments}
We would like to thank the unknown referees for carefully reading our manuscript and for giving constructive comments which substantially helped improving the quality of the paper. We would also like to thank Professor Alan Demlow for valuable discussion and suggestions.

\bibliographystyle{siamplain}

\begin{thebibliography}{99}

\bibitem{Amrouche1998}
{\sc Amrouche, Bernardi, Dauge and Girault},
{\em Vector potentials in three-dimensional nonsmooth domains},
Math. Meth. Appl. Sci., 21 (1998), pp~823-864.

\bibitem{Anisworth;Olden2011} {\sc M. Anisworth and J.~T. Oden}, 
{\em A posteriori error estimation in finite element analysis, Pure and Applied Mathematics}, 
A Wiley-Interscience Series of Texts, Monographs, and Tracts, 2011.

\bibitem{Arnold.D;RS.Falk;Winther.R2000} {\sc D.~N. Arnold, R.~S. Falk, and R. Winther},
{\em Multigrid in $H(\div)$ and $H(\curl)$},
Numer. Math., 85 (2000), pp~197-217.

\bibitem{Arnold.D;RS.Falk;Winther.R2006} {\sc D.~N. Arnold, R.~S. Falk and R. Winther}, 
{\em Finite element exterior calculus, homological techniques, and applications}, 
Acta Numerica, 15 (2006), pp~1-155.

\bibitem{Arnold.D;RS.Falk;Winther.R2010} {\sc D.~N. Arnold, R.~S. Falk and R. Winther}, 
{\em Finite element exterior calculus: from Hodge theory to numerical stability}, 
Bull. Amer. Math. Soc., 47 (2010), pp~281--354.



%


\bibitem{Babuska;Vogelius1984}
{\sc I.~Babu\v{s}ka and M. Vogelius},
{\em Feedback and adaptive finite element solution of one-dimensional boundary value problems},
Numerische Mathematik, 44 (1984): pp~75--102.



\bibitem{Carstensen;Feischl;Praetoriu2014}
{\sc C. Carstensen, M. Feischl, M. Page, and D. Praetorius},
{\em Axioms of adaptivity},
Computers and Mathematics with Applications, 67 (2014), pp~1195--1253.

\bibitem{Cascon;Kreuzer2008} 
{\sc J.~M. Cascon, C. Kreuzer, R.~H. Nochetto, and K.~G. Siebert},
{\em Quasi-optimal convergence rate for an adaptive finite element method},
SIAM J. Numer. Anal., 46 (2008), pp~2524-2550.

\bibitem{Chen;Xu;Zou2009}
{\sc J.~Q. Chen, Y.~F. Xu, and J. Zou},
{\em Convergence analysis of an adaptive edge element method for Maxwell's equations},
Applied Numerical Mathematics, 59 (2009), pp~2950--2969.

\bibitem{Chen;Xu;Zou2012}
{\sc J.~Q. Chen, Y.~F. Xu, and J. Zou},
{\em An adaptive edge element method and its convergence for a saddle-point problem from magnetostatics},
Numer. Methods Partial Differential Eq., 28 (2012), pp~1643--1666.

\bibitem{Chen;Holst;Xu2009} {\sc L. Chen, M. Holst, and J. Xu}, 
{\em Convergence and optimality of adaptive mixed finite element methods}, 
Mathematics of Computation, 78 (2009), pp~35--53.



\bibitem{Costabel;McIntosh2010} {\sc M. Costabel, and A. McIntosh},
{\em On Bogovski\v{\i} and regularized Poincar\'e integral operators for de Rham complexes on Lipschitz domains},
Math. Z., 265 (2010), pp~297--320.

\bibitem{Christiansen;Winther2008} {\sc S.~H. Christiansen, and R. Winther},
{\em Smoothed projections in finite element exterior calculus},
Mathematics of Computation, 77 (2008), pp~813--829.



\bibitem{Demlow2014} {\sc A. Demlow, and A.~N. Hirani}, 
{\em A posteriori error estimates for finite element exterior calculus: The de Rham complex}, 
Found Comput Math, 14 (2014), pp~1337--1371.

\bibitem{Demlow2016}{\sc A. Demlow},
{\em Convergence and quasi-optimality of adaptive finite element methods for harmonic forms}, Numerische Mathematik (2017), doi:10.1007/s00211-016-0862-6.

\bibitem{Dorfler1996} {\sc W. D\"orfler}, 
{\em A convergent adaptive algorithm for Poisson's equation}, 
SIAM J. Numer. Anal., 33 (1996), pp 1106--1124.

\bibitem{Duan;Qiu;Tan;Zheng2016}
{\sc H.~Y. Duan, F.~J. Qiu, R.~C.~E. Tan, and W.~Y. Zheng},
{\em An adaptive FEM for a Maxwell interface problem},
J. Sci. Comput., 67 (2016), pp~669--704.

\bibitem{Falk;Winther2014} {\sc R.~S. Falk, and R. Winther},
{\em Local bounded cochain projections},
Math. Comp., 83 (2014), pp~2631--2656.

\bibitem{Feischl;Fuhrer;Pratetorius2014} {\sc M. Feischl, T. F\"uhrer, and D. Praetorius},
{\em Adaptive FEM with optimal convergence rates for a certain class of nonsymmetric and possibly nonlinear problems},
SIAM J. Numer. Anal., 52 (2014), pp~601-625.


\bibitem{Girault1986}
{\sc V.~Girault and P.~Raviart},
{\em Finite element methods for Navier-Stokes equations},
Springer-Verlag, Berlin, 1986.

\bibitem{GMM2011}
{\sc V~Gol'Dshtein, I~Mitrea and M~Mitrea}
{\em Hodge decompositions with mixed boundary conditions and applications to partial differential equations on Lipschitz manifolds},
Journal of Mathematical Sciences, 172 (2011), pp~347-400.

\bibitem{Hiptmair2002} {\sc R. Hiptmair},
{\em Finite elements in computational electromagnetism},
Acta Numerica, 11 (2002), pp 237--339.


\bibitem{Hiptmair;Li;Zou2012} {\sc R. Hiptmair, J. Li, and J. Zou},
{\em Universal extension for Sobolev spaces of differential forms and applications},
Journal of Functional Analysis, 263 (2012), pp~364--382.

\bibitem{Hiptmair;Xu2007}
{\sc R.~Hiptmair and J.~Xu},
{\em Nodal auxiliary space preconditioning in $H(\curl)$ and $H(\div)$ spaces},
SIAM J. Numer. Anal., 45 (2007), pp~2483–2509.

\bibitem{Holst;Mihalik;Ryan;2013}
{\sc M. Holst, A. Mihalik, and R. Szypowski},
{\em Convergence and optimality of adaptive methods in the finite element exterior calculus framework},
arXiv:1306.1886 [math.NA].


\bibitem{Huang;Xu2012} {\sc J.~G. Huang, and Y.~F. Xu}, 
{\em Convergence and complexity of arbitrary order adaptive mixed element methods for the Poisson equation}, 
Science China Mathematics, 55 (2012), pp~1083--1098.

\bibitem{Kress1971}
{\sc R.~Kress},
{\em Linear integral equations},
Vol. 82 of Applied Mathematical Sciences, Springer, Berlin.

\bibitem{Mitrea;Mitrea;Taylor2001} {\sc D. Mitrea, M. Mitrea, and M. Taylor},
{\em Layer potentials, Hodge Laplacian, and global boundary problems in nonsmooth Riemannian manifolds},
Mem. Amer. Math. Soc., 150 (2001).

\bibitem{P.Monk2003} {\sc P. Monk},
{\em Finite element methods for Maxwell equations},
Numerical Mathematics and Scientific Computation, Oxford university Press, Oxford, 2003.

\bibitem{Monk;Demkowicz2000}
{\sc P.~Monk and L.~Demkowicz},
{\em Discrete compactness and the approximation of Maxwell's equations in $\mathbb R^3$},
Mathematics of Computation, 70 (2000), pp~507--523.


\bibitem{Morin;Nochetto;Siebert;2000} {\sc P. Morin, R. Nochetto, and K. Siebert}, 
{\em Data oscillation and convergence of adaptive FEM}, 
SIAM Journal on Numerical Analysis, 38 (2000), pp~466--488. 

\bibitem{Morin;Nochetto;Siebert;2002}  {\sc P. Morin, R. Nochetto, and K. Siebert}, 
{\em Convergence of adaptive finite element methods}, 
SIAM Review, 44 (2002), pp~631-658.

\bibitem{Nedelec.J1980}
{\sc J.~C. N\'{e}d\'{e}lec},
{\em Mixed finite elements in $R^{3}$},
Numer. Math., 35 (1980), pp~315--341.

\bibitem{Nedelec.J1986}
{\sc J.~C. N\'{e}d\'{e}lec},
{\em A new family of mixed finite elements in $R^{3}$},
Numer. Math., 50 (1986), pp~57--81.

\bibitem{Nochetto;Siebert;Veeser2009}
{\sc R.~Nochetto, K.~Siebert, and A.~Veeser},
{\em Theory of adaptive finite element methods: An introduction},
In R. DeVore and A. Kunoth, editors, Multiscale, Nonlinear and Adaptive Approximation, pages 409--542. Springer, 2009.


\bibitem{Schoberl.J2001}
{\sc J.~Sch{\"{o}}berl}.
\newblock {Commuting Quasi-interpolation operators for mixed finite elements}.
\newblock In {\em 2nd European Conference on Computational Mechanics}, pages
  854--855, Cracow; Poland, 2001.


\bibitem{Schoberl2007} {\sc J. Sch\"{o}berl,}
{\em A posteriori error estimates for Maxwell equations}, 
Math. Comp., 77 (2007), pp~633--649.


\bibitem{Stevenson2007} {\sc R. Stevenson}, 
{\em Optimality of standard adaptive finite element method}, 
Found. Comput. Math., 7 (2007), pp~245--269.


\bibitem{Verfurth1996} {\sc R.~ Verf\"urth}, 
{\em A review of a posteriori error estimation and adaptive mesh refinement techniques}, 
Wiley Teubner, Chichester and Newyork (1996).

\bibitem{Zhong;Chen;Shu;Xu2012}
{\sc L.~Q. Zhong, L. Chen, S. Shu, G. Wittum, and J.~C. Xu},
{\em Convergence and optimality of adaptive edge finite element methods for time-harmonic Maxwell equations},
Math. Comp., 81 (2012), pp~623--642.
\end{thebibliography}

\end{document}